\documentclass[11pt,reqno]{amsart}
\usepackage{fullpage}
\usepackage{times}
\usepackage[utf8]{inputenc}
\usepackage[english]{babel}
\usepackage[mathscr]{euscript}
\usepackage{amsmath,amssymb,amsthm, graphicx, enumitem, xcolor, verbatim}
\usepackage[colorlinks=true, linkcolor=blue, citecolor=blue, urlcolor=blue]{hyperref}
\usepackage{bbm}
\usepackage[normalem]{ulem}

\def\F{\mathbb{F}}

\def\R{\mathbb{R}}
\def\C{\mathbb{C}}

\def\N{\mathbb{N}}
\def\P{\mathbb{P}}
\def\E{\mathcal{E}}
\def\M{M}

\def\sgn{\mathop{\rm sgn}}
\def\intr{\mathop{\rm int}}

\newtheoremstyle{case}{}{}{}{}{}{:}{ }{}
\theoremstyle{case}

\theoremstyle{plain}

\newtheorem{theorem}{Theorem}[section]
\newtheorem{utheorem}{\textrm{\textbf{Theorem}}}

\newtheorem{corollary}[theorem]{Corollary}

\newtheorem{proposition}[theorem]{Proposition}
\newtheorem{lemma}[theorem]{Lemma}

\theoremstyle{definition}

\newtheorem{definition}[theorem]{Definition}
\newtheorem{remark}[theorem]{Remark}

\numberwithin{equation}{section}

\begin{document}
\title[Entrywise transforms preserving matrix positivity and non-positivity]{Entrywise transforms preserving matrix positivity and non-positivity}

\author[D.~Guillot]{Dominique Guillot}
\address[D.~Guillot]{University of Delaware, Newark, DE, USA and Universit\'e Laval, Qu\'ebec, QC, Canada}
\email{\tt dguillot@udel.edu}

\author[H.~Gupta]{Himanshu Gupta}
\address[H.~Gupta]{University of Regina, Regina, SK, Canada}
\email{\tt himanshu.gupta@uregina.ca}

\author[P.K.~Vishwakarma]{Prateek Kumar Vishwakarma}
\address[P.K.~Vishwakarma]{Universit\'e Laval, Qu\'ebec, QC, Canada}
\email{\tt prateek-kumar.vishwakarma.1@ulaval.ca,~prateekv@alum.iisc.ac.in}

\author[C.H.~Yip]{Chi Hoi Yip}
\address[C.H.~Yip]{Georgia Institute of Technology, Atlanta, GA, USA}
\email{cyip30@gatech.edu}

\keywords{Matrix positivity, non-positivity, entrywise transform, inertia, field automorphism, matrices associated with graphs, Schoenberg’s theorem, 100th anniversary of P\'olya and Szeg\H{o}'s observation}
\subjclass[2020]{15B48, 47B49 (primary), 15B57, 26A48, 05C50 (secondary)}

\begin{abstract}
We characterize real and complex functions which, when applied entrywise to square matrices, yield a positive definite matrix if and only if the original matrix is positive definite. We refer to these transformations as {\it sign preservers}. Compared to classical work on entrywise preservers of Schoenberg and others, we completely resolve this problem in the harder fixed dimensional setting, extending a similar recent classification of sign preservers obtained for matrices over finite fields. When the matrix dimension is fixed and at least \(3\), we show that the sign preservers are precisely the positive scalar multiples of the continuous automorphisms of the underlying field. This is in contrast to the $2 \times 2$ case where the sign preservers are extensions of power functions. These results are built on our classification of \(2 \times 2\) entrywise positivity preservers over broader complex domains. Our results yield a complementary connection with a recent work of Belton, Guillot, Khare, and Putinar on negativity-preserving transforms. We also extend our sign preserver results to matrices with a structure of zeros, as studied by Guillot, Khare, and Rajaratnam for the entrywise positivity preserver problem. Finally, in the spirit of sign preservers, we address a natural extension to monotone maps, classically studied by Loewner and many others.
\end{abstract}

\maketitle

\section{Introduction}
As we commemorate the 100th anniversary of the seminal observation by P\'olya and Szeg\H{o} in 1925, that every absolutely monotonic function preserves positive semidefiniteness when applied entrywise to real matrices, we address a different facet of the century-old classification problem in fixed dimensions, rooted in celebrated theorems of Schoenberg \cite{Schoenberg-Duke42}, Rudin \cite{Rudin-Duke59}, Herz \cite{herz1963fonctions}, and others. This story begins in 1911 with the fundamental Schur product theorem \cite{Schur1911}: for an integer $n\geq 1$, and positive semidefinite $A=(a_{ij})$ and $B=(b_{ij})\in \C^{n\times n}$, the entrywise product $A\circ B:=(a_{ij}b_{ij})$ is positive semidefinite. The class of positive semidefinite matrices forms a closed convex cone, which is further closed under taking entrywise conjugates. Thus if $A=(a_{ij})$ is positive semidefinite, then so is the entrywise product $A^{
\circ m}\circ \overline{A}^{\circ k}:=(a_{ij}^m\overline{a_{ij}}^{k})$ for integers $m,k\geq 0$, where we set $0^0:=1$. Writing $f[A] := (f(a_{ij}))$ for the entrywise action of a function $f: \C \to \C$ on a matrix $A = (a_{ij})$, we immediately conclude that the monomial $f(z) = z^m \overline{z}^k$ preserves positive semidefiniteness when applied entrywise. Note that this holds independently of the dimension of the matrix $A$. Clearly, the same holds for nonnegative combinations of such monomials, as well as appropriate limits of these polynomials. This is the observation of P\'olya and Szeg\H{o}, who asked in their famous book \cite{polyaszego} if other functions have the same property.

\begin{lemma}[P\'olya and Szeg\H{o} \cite{polyaszego}, Rudin \cite{Rudin-Duke59}]
Let $\Omega\subseteq \C$, and let $f: \Omega \to \C$ be given by $f(z)\equiv \sum_{m,k\geq 0}c_{m,k}z^{m}\overline{z}^{k},$ where $c_{m,k}\in [0,\infty)$ for all integers $m,k\geq 0$. Then $f[A]:=(f(a_{ij}))$ is positive semidefinite for all positive semidefinite $A\in \Omega^{n\times n},$ and all $n\geq 1.$
\end{lemma}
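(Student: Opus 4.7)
The plan is to assemble three standard ingredients into a direct proof: the Schur product theorem, the closure of the positive semidefinite cone under entrywise complex conjugation, and the closedness of this cone in the Euclidean topology on $\C^{n \times n}$.

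First, I would fix an integer $n \geq 1$ and a positive semidefinite $A = (a_{ij}) \in \Omega^{n\times n}$, and then argue by induction on $m+k$ that $A^{\circ m} \circ \overline{A}^{\circ k}$ is positive semidefinite for every pair of integers $m, k \geq 0$. The base case $m = k = 0$ gives the all-ones matrix $J = \mathbf{1}\mathbf{1}^*$, which is rank-one positive semidefinite. The inductive step follows by writing $A^{\circ(m+1)} \circ \overline{A}^{\circ k} = A \circ \bigl(A^{\circ m} \circ \overline{A}^{\circ k}\bigr)$, and analogously for increasing $k$, then applying the Schur product theorem together with the fact that $\overline{A}$ is positive semidefinite whenever $A$ is (since $x^{*} \overline{A} x = \overline{\overline{x}^{*} A \overline{x}} \geq 0$).

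Next, for each integer $N \geq 0$ I would consider the truncated polynomial
\[
f_N(z) := \sum_{m+k \leq N} c_{m,k}\, z^m \overline{z}^k,
\]
so that applied entrywise, $f_N[A] = \sum_{m+k \leq N} c_{m,k}\, A^{\circ m} \circ \overline{A}^{\circ k}$ is a nonnegative linear combination of positive semidefinite matrices, hence itself positive semidefinite.

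Finally, I would pass to the limit. Because the coefficients $c_{m,k}$ are nonnegative, convergence of the series defining $f$ on $\Omega$ is absolute, and so the partial sums $f_N(z)$ converge to $f(z)$ for every $z \in \Omega$. Applied entrywise this gives $f_N[A] \to f[A]$ in $\C^{n \times n}$. Since the set of positive semidefinite $n \times n$ matrices is closed, being the intersection over $x \in \C^n$ of the closed half-spaces $\{M : x^{*} M x \geq 0\}$, the limit $f[A]$ is positive semidefinite. I do not anticipate a genuine obstacle; the only point requiring any care is ensuring that the chosen truncation scheme actually converges on $\Omega$, which is forced by the nonnegativity of the $c_{m,k}$.
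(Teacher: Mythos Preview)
Your proof is correct and follows precisely the approach the paper sketches in the paragraph immediately preceding the lemma: Schur's product theorem and closure under conjugation handle each monomial $A^{\circ m}\circ\overline{A}^{\circ k}$, nonnegative combinations handle the truncations, and closedness of the positive semidefinite cone handles the limit. The paper does not give a formal proof beyond that sketch, so your write-up simply fleshes out what is already indicated there.
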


In the special case where $\Omega\subseteq \R,$ the corresponding set of functions identified above is the set of convergent power series with nonnegative coefficients, also known as absolutely monotonic functions. P\'olya and Szeg\H{o}'s work naturally led to the search for real functions that are not absolutely monotonic, but preserve positivity for all positive semidefinite matrices of all sizes. The problem was resolved 17 years later, through Schoenberg’s work and later refined by many, revealing — quite surprisingly — that no other such positivity preservers exist. To state the result formally, we introduce the following notation. For $\Omega \subseteq \C$, let $\P_n(\Omega)$ denote the set of all $n\times n$ Hermitian positive semidefinite matrices with entries in $\Omega$, and let $\P_n := \P_n(\C)$. We also define $D(a, r) := \{z \in \C : |z-a| < \rho\}$, the complex open disk centered at $a$ and of radius $\rho$. 

\begin{theorem}[Schoenberg \cite{Schoenberg-Duke42}, Rudin \cite{Rudin-Duke59}, Herz \cite{herz1963fonctions}, Vasudeva \cite{vasudeva1979positive}, {\cite[Chapter 18]{khare2022matrix}}]\label{Tschoenberg}
Let $0<\rho\leq \infty$. and let $\Omega=(-\rho,\rho),[0,\rho),(0,\rho)$, or $D(0,\rho)$. Let $f:\Omega\to\F,$ where $\F=\C$ if $\Omega\not\subseteq \R$ and $\F=\R$ otherwise. Then the following are equivalent:
\begin{enumerate}
    \item $f[A]:=(f(a_{ij}))\in \P_n$ for all $A\in \P_n(\Omega),$ and all $n\geq 1.$
    \item $f(z)\equiv \sum_{m,k\geq 0}c_{m,k}z^{m}\overline{z}^{k},$ where $c_{m,k}\geq 0$ for all integers $m,k\geq 0.$
\end{enumerate}
\end{theorem}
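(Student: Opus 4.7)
The direction (2)$\Rightarrow$(1) is precisely the P\'olya--Szeg\H{o} lemma stated just above, so I only address (1)$\Rightarrow$(2). My strategy is to treat $\Omega=[0,\rho)$ as the core case and then reduce the other three choices of $\Omega$ to it.

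I first show that $f$ is continuous, and in fact $C^\infty$, on the interior of $\Omega$. Applied to the $2\times 2$ test matrix $\bigl(\begin{smallmatrix}a & b\\ b & c\end{smallmatrix}\bigr)\in\P_2(\Omega)$ with $b^2\le ac$, the positivity-preserver hypothesis yields $f(a)f(c)\ge f(b)^2$, which already forces $f\ge 0$ on $[0,\rho)$ and shows that $f$ is locally bounded. Mollifying $f$ by a nonnegative smooth bump of small support preserves the positivity-preserver property (by Schur product), and letting the mollification parameter tend to $0$ upgrades $f$ to $C^\infty$ on the interior of $\Omega$.

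The heart of the argument on $[0,\rho)$ is a Horn-type coefficient-extraction step. For $u=(u_1,\ldots,u_n)\in[0,\sqrt{\rho})^n$, the rank-one matrix $A:=(u_iu_j)$ lies in $\P_n([0,\rho))$, so $f[A]\in\P_n$. Expanding $f$ via its Taylor series at $0$ as $f(x)=\sum_{k\ge 0}c_k x^k$ gives
\[
f[A]\;=\;\sum_{k\ge 0}c_k\,w_kw_k^{\top},\qquad w_k:=(u_1^k,\ldots,u_n^k)^{\top},
\]
where the Vandermonde vectors $w_0,\ldots,w_{n-1}$ are linearly independent. By selecting the nodes $u_i$ so that one $w_k$ is dominant, and letting the remaining nodes tend to $0$ to suppress the tail, one converts the PSD requirement on $f[A]$ into the inequality $c_k\ge 0$ for each $k$. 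Hence $f$ is absolutely monotonic on $[0,\rho)$ and equals its power series, which converges on all of $(0,\rho)$ by a Bernstein-type argument.

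It remains to extend this to the other three domains. If $\Omega=(-\rho,\rho)$, restricting to $[0,\rho)$-valued matrices gives by the previous step a series $f(x)=\sum c_k x^k$ with $c_k\ge 0$ on $[0,\rho)$; testing against matrices of the form $DAD$ with $D=\operatorname{diag}(\pm 1)$ then shows this same series represents $f$ on $(-\rho,0]$. For $\Omega=(0,\rho)$, continuity together with $f\ge 0$ extends $f$ to $0$ and recovers the series. For $\Omega=D(0,\rho)$, the same strategy applied to the rank-one Hermitian matrix $vv^{*}$ with complex $v$ yields $f[vv^{*}]=\sum_{m,k\ge 0} c_{m,k}\,w_{m,k}w_{m,k}^{*}$, where $w_{m,k}:=(v_1^m\overline{v_1}^k,\ldots,v_n^m\overline{v_n}^k)^{\top}$; a bivariate Vandermonde extraction then separates the $z^m$ and $\overline{z}^k$ contributions and forces each $c_{m,k}\ge 0$. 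The main obstacle is the extraction step itself: isolating the sign of a single Taylor coefficient while controlling the contribution of all higher-order terms requires a quantitative near-orthogonality choice of nodes combined with a tail estimate, and the complex case is correspondingly more delicate.
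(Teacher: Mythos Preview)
The paper does not prove this theorem. Theorem~\ref{Tschoenberg} is stated in the introduction purely as classical background, with attribution to Schoenberg, Rudin, Herz, Vasudeva, and the book of Khare; no proof or sketch is given anywhere in the paper. So there is nothing to compare your argument against.

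That said, since you have written an independent sketch, a few comments on its correctness are in order. The overall architecture --- reduce to $[0,\rho)$, establish smoothness, then run a Horn-type Vandermonde extraction on rank-one matrices --- is indeed the classical route. However, your smoothness step has a genuine gap. The sentence ``Mollifying $f$ by a nonnegative smooth bump of small support preserves the positivity-preserver property (by Schur product)'' is not justified: mollification is \emph{convolution}, $f_\epsilon(x)=\int f(x-t)\phi_\epsilon(t)\,dt$, so $f_\epsilon[A]=\int f[A-t\,{\bf 1}_{n\times n}]\,\phi_\epsilon(t)\,dt$. For this integrand to be PSD you would need $A-t\,{\bf 1}_{n\times n}\in\P_n(\Omega)$ for all $t$ in the support of $\phi_\epsilon$, which fails for $t>0$ in general (subtracting a rank-one PSD matrix can destroy positivity), and the Schur product plays no role here. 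The standard proofs obtain smoothness differently: either via Loewner's theorem on operator-monotone/convex functions applied to derived maps, or by first proving continuity and monotonicity from $2\times 2$ tests, then using the Bernstein--Widder characterization of absolutely monotonic functions, or (in Rudin's approach) via harmonic analysis on spheres. Your write-up also hand-waves the coefficient extraction (``selecting the nodes so that one $w_k$ is dominant'' and ``suppress the tail''): without smoothness already in hand you cannot write a convergent Taylor series, and even with it the isolation of a single $c_k$ requires a careful limiting or differentiation argument rather than a node-selection trick. The reductions for $(-\rho,\rho)$ and $D(0,\rho)$ are likewise only gestures; in particular, the $DAD$ trick does not by itself extend the power series to negative arguments.
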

\noindent This result has sparked numerous classical and modern developments, with applications in areas as diverse as metric geometry and high-dimensional covariance estimation. See \cite{BGKP-survey-part-1, BGKP-survey-part-2} for details.

The \textit{dimension-free} aspect, i.e., the independence from the dimension $n$ of the preservers in Theorem~\ref{Tschoenberg} naturally leads to the problem of classifying positivity preservers $f$ over positive semidefinite matrices of a fixed dimension $n$. Other than the $n=2$ case resolved by Vasudeva \cite{vasudeva1979positive}, this \textit{fixed-dimensional} problem --- even after 100 years since P\'olya and Szeg\H{o}'s observation --- largely remains open. We refer to recent works of Khare and Tao \cite{Khare-Tao} and of Belton, Guillot, Khare, and Putinar \cite{BGKP-fixeddim} for developments involving symmetric function theory. In the same vein of fixed-dimensional results, an impactful classical result of FitzGerald and Horn is worth mentioning.

\begin{theorem}[FitzGerald and Horn \cite{fitzgerald1977fractional}; see also \cite{guillot2015complete,jain2020hadamard}]\label{thm:fitz_horn_fractional}
Fix an integer $n\geq 2$ and $\beta\in \R$. Then 
\begin{align*}
 A^{\circ \beta}:=(a_{ij}^{\beta})\in \P_n \quad\mbox{for all}\quad A=(a_{ij})\in \P_n([0,\infty))
\end{align*}
if and only if $\beta \in \N \cup [n-2,\infty)$. Moreover, if $\beta \in (0,n-2)\setminus \mathbb{N}$, then $ \bigl((1+\epsilon ij)^\beta\bigr)_{i,j=1}^n$ is not positive semidefinite for all sufficiently small $\epsilon>0$.
\end{theorem}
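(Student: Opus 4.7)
The plan is to split the equivalence into a necessity claim (verified through the explicit counterexample in the second half of the statement) and two sufficiency claims, corresponding to the decomposition $\N \cup [n-2,\infty) = \N \sqcup \bigl([n-2,\infty)\setminus \N\bigr)$. The case $\beta < 0$ is immediate, because $a_{ij}^{\beta}$ is not defined at the singular matrix $E_{11}\in \P_n([0,\infty))$; the case $\beta = 0$ is trivial, since $A^{\circ 0}$ is the all-ones matrix. So the substantive work is for $\beta > 0$.

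For sufficiency when $\beta$ is a positive integer, iterated application of the Schur product theorem gives $A^{\circ \beta} = A\circ A\circ \cdots \circ A \in \P_n$. For sufficiency when $\beta \in [n-2,\infty)\setminus \N$, I would invoke the original FitzGerald--Horn argument, which proceeds by induction on $n$. The base case $n=2$ reduces to $\det(A^{\circ\beta}) = (ad)^{\beta} - b^{2\beta} \geq 0$ for any PSD matrix $A = \bigl(\begin{smallmatrix} a & b \\ b & d \end{smallmatrix}\bigr)$ with nonnegative entries, which follows from $ad \geq b^2$ since $t \mapsto t^{\beta}$ is nondecreasing on $[0,\infty)$. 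The inductive step uses the integral identity
\[
a^{\beta} - b^{\beta} \;=\; \beta(a-b)\int_{0}^{1}\bigl(b + s(a-b)\bigr)^{\beta-1}\, ds,
\]
applied entrywise to a rank-reducing perturbation of $A$, to trade one unit of $\beta$ for one unit of matrix rank, matching the threshold $\beta \geq n-2$.

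For necessity I would Taylor expand $M_\epsilon := \bigl((1+\epsilon ij)^{\beta}\bigr)_{i,j=1}^{n}$ entrywise, valid for $0 < \epsilon < 1/n^{2}$:
\[
M_\epsilon \;=\; \sum_{k=0}^{\infty}\binom{\beta}{k}\epsilon^{k}\, v_k v_k^{T}, \qquad v_k := (1^{k}, 2^{k}, \ldots, n^{k})^{T} \in \R^{n}.
\]
Writing $\beta \in (m, m+1)$ with $0 \leq m \leq n-3$, I observe $\binom{\beta}{k} > 0$ for $0 \leq k \leq m+1$ while $\binom{\beta}{m+2} < 0$, since the factor $\beta - m - 1 \in (-1, 0)$ enters the numerator exactly once. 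Because $v_0, v_1, \ldots, v_{n-1}$ form a Vandermonde basis of $\R^{n}$ and $m+2 \leq n-1$, I can pick a nonzero $u \in \R^n$ with $u \perp v_0, v_1, \ldots, v_{m+1}$ and $u^{T} v_{m+2} \neq 0$. Then
\[
u^{T} M_\epsilon u \;=\; \binom{\beta}{m+2}\epsilon^{m+2}\bigl(u^{T} v_{m+2}\bigr)^{2}\;+\;O(\epsilon^{m+3}),
\]
which is strictly negative for all sufficiently small $\epsilon > 0$. This simultaneously furnishes the ``only if'' direction and the explicit non-PSD matrix asserted in the last sentence of the theorem.

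The main obstacle is the sufficiency proof for $\beta \in [n-2,\infty)\setminus \N$: it cannot be reduced to the Schur product theorem, and the classical FitzGerald--Horn argument requires a careful interplay between the integral representation above and an induction that tracks the rank of a Gram factorization $A = BB^{T}$. By contrast, the integer case is immediate from Schur, the $\beta < 0$ obstruction is a one-line observation, and the necessity direction is purely linear-algebraic once the Taylor--Vandermonde trick is set up.
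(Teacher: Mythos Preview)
The paper does not prove this theorem; it is quoted from the literature (FitzGerald--Horn, with the refinements cited) and used only as a black box, most notably in the proof of Lemma~\ref{lemma:sign_power_real}. Your sketch is a correct and faithful outline of the classical argument: Schur products for integer $\beta$, the FitzGerald--Horn integral identity combined with induction on $n$ for $\beta \geq n-2$, and the Taylor--Vandermonde analysis of $\bigl((1+\epsilon ij)^{\beta}\bigr)$ for necessity. The sign pattern of $\binom{\beta}{k}$ and the choice of the test vector $u$ are handled correctly, and your one-line dismissals of $\beta<0$ and $\beta=0$ are fine.
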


In a different direction, following the work of Cooper, Hanna, and Whitlatch \cite{cooper2022positive}, Guillot, Gupta, Vishwakarma, and Yip \cite{guillot2024positivity} recently considered positive definite matrices over finite fields $\F_q$. By analogy from $\R$, we call an element in $\F_q$ positive if it is the square of a non-zero element in $\F_q$; and we call an $n \times n$ symmetric matrix with entries in $\F_q$ positive definite if all its leading principal minors are positive in $\F_q$. Guillot, Gupta, Vishwakarma, and Yip \cite{guillot2024positivity} classified the entrywise positivity preservers for matrices over finite fields in the harder fixed-dimensional setting, specifically for matrices of a fixed dimension $n \geq 3$\footnote{In the $n=2$ case, the preservers are more field dependent and some cases of the problem remain open.}. Surprisingly, these are precisely the positive multiples of the field's automorphisms (i.e.,  powers of the Frobenius). To formalize this, henceforth, we denote the set of $n \times n$ matrices with entries in a set $S$ by $M_n(S)$.

\begin{theorem}[{\cite[Corollary 1.4]{guillot2024positivity}}]\label{TFiniteFields}
For a finite field $\F_q$ and a fixed $n \geq 3$, the entrywise positivity preservers on $M_n(\F_q)$ are precisely the positive multiples of the automorphisms of $\F_q$.
\end{theorem}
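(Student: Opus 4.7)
For the easy direction: if $f(x) = c\,\sigma(x)$ with $c$ a positive (nonzero-square) element of $\F_q$ and $\sigma \in \mathrm{Aut}(\F_q)$, then the $k\times k$ leading principal minor of $f[A]$ equals $c^k \sigma(\det A_k)$, where $A_k$ is the $k\times k$ leading principal submatrix of $A$. Since the positives form an index-$2$ multiplicative subgroup of $\F_q^\times$, each $c^k$ is positive; and since $\sigma$ sends squares to squares, $\sigma(\det A_k)$ is positive whenever $\det A_k$ is. Hence $f[A]$ is positive definite whenever $A$ is. For the converse, fix $n\ge 3$ and let $f\colon \F_q\to\F_q$ be a positivity preserver on $M_n(\F_q)$. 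The plan is to reverse-engineer the form $f = c\sigma$ through four stages, each probing $f$ with a tailored family of PD test matrices.

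First, I would show $f(0)=0$. Applying $f$ to the block-diagonal PD matrix $\mathrm{diag}(a,1,\ldots,1)$ with $a$ positive yields an image with $f(a)$, $f(1)$ on the diagonal and $f(0)$ off-diagonal; its $2\times 2$ and $3\times 3$ leading principal minors (the latter available because $n\ge 3$) unpack to the positivity of $f(a)f(1)-f(0)^2$ and of $[f(1)-f(0)]\{f(a)[f(1)+f(0)] - 2f(0)^2\}$. Combining these as $a$ varies over positives, together with analogous tests on $\mathrm{diag}(a,b,1,\ldots,1)$, produces a rigid system that is consistent for every positive $a,b$ only when $f(0)=0$. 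With this in hand, applying $f$ to $aI_n$ forces $f(a)$ positive for every positive $a$. Setting $c := f(1)$ (itself positive) and replacing $f$ by $g(x):=c^{-1}f(x)$ gives a positivity preserver with $g(0)=0$ and $g(1)=1$, so it suffices to prove $g\in\mathrm{Aut}(\F_q)$.

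Next I would show that $g$ is a ring homomorphism. For multiplicativity, apply $g$ to PD matrices of the form $I_n + t\,vv^\top$ with $v\in \F_q^n$ and $t\in\F_q$ chosen so that the matrix stays PD; the entrywise image has diagonal entries $g(1+tv_i^2)$ and off-diagonal entries $g(tv_iv_j)$, and imposing positivity of every $2\times 2$ principal minor---across varying $(v,t)$---yields functional identities that, when combined, force $g(xy)=g(x)g(y)$ for all $x,y\in\F_q^\times$. For additivity, analogous two-term rank-one perturbations $I_n + t_1 v_1 v_1^\top + t_2 v_2 v_2^\top$ encode field addition in their entries, and the positivity of the minors of their images yields $g(x+y)=g(x)+g(y)$. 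Since any ring endomorphism of a finite field fixing $1$ is a field automorphism, $g=\sigma$ and $f=c\sigma$, as desired.

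The main technical obstacle is the multiplicativity step. Over $\R$ or $\C$ one invokes analytic tools---limits, monotonicity, or power-series expansions---none of which are available over $\F_q$; every deduction must be strictly algebraic. The difficulty is that, for each candidate failure $g(xy)\neq g(x)g(y)$, one must construct an explicit PD test matrix whose image has a non-positive principal minor; orchestrating such tests uniformly in $x,y\in\F_q^\times$, while simultaneously keeping the test matrices themselves PD over $\F_q$, requires careful exploitation of the cyclic structure of $\F_q^\times$ and is the technical heart of the argument in \cite{guillot2024positivity}.
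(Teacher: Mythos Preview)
The paper does not give its own proof of this statement: Theorem~\ref{TFiniteFields} is quoted as \cite[Corollary~1.4]{guillot2024positivity} and used only as motivation, so there is no in-paper argument to compare your proposal against.

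That said, your proposal is an outline rather than a proof, and the outline has a genuine gap beyond the multiplicativity step you already flag. Your argument for $f(0)=0$ relies on the claim that the positivity of $f(a)f(1)-f(0)^2$ and of $[f(1)-f(0)]\{f(a)[f(1)+f(0)]-2f(0)^2\}$, as $a$ ranges over the nonzero squares, ``produces a rigid system that is consistent only when $f(0)=0$.'' Over $\R$ this would follow by letting $a\to 0^+$, but over $\F_q$ there are only $(q-1)/2$ positive values of $a$, and it is not at all clear that the resulting finite list of sign conditions forces $f(0)=0$; for small $q$ one can easily write down nonzero choices of $f(0),f(1)$ and an assignment of $f$ on the squares satisfying all of them. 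The same difficulty infects your additivity step: encoding $x+y$ in a rank-two perturbation and asserting that minor positivity ``yields $g(x+y)=g(x)+g(y)$'' again presumes a density/limiting mechanism that does not exist over a finite field. You correctly identify that the actual proof in \cite{guillot2024positivity} must exploit the cyclic structure of $\F_q^\times$ in a delicate combinatorial way; but that same delicacy is already needed at the $f(0)=0$ and additivity stages, not only for multiplicativity. As written, your sketch reduces the theorem to exactly the hard part and then cites the source being proved.

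A minor point on the easy direction: the ``index-$2$'' remark is irrelevant (and false in characteristic~$2$); all you need is that the nonzero squares form a subgroup, so $c$ a square implies $c^k$ is a square.
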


To the authors' knowledge, this is the only setting in which the fixed-dimensional entrywise positivity preserver problem has been fully resolved. Remarkably, in this case, the entrywise positivity preservers exhibit an even stronger property: a symmetric matrix $A$ is positive definite if and only if $f[A]$ is positive definite. We call such functions {\it sign preservers}. The characterization in Theorem~\ref{TFiniteFields} reveals that, over $\F_q$, the notions of positivity preservers and sign preservers are equivalent when $n\geq 3$.

\begin{corollary}\label{CsignFiniteFields}
Under the premise in Theorem~\ref{TFiniteFields}, the sign preservers on $M_n(\F_q)$ coincide with the positive multiples of the field automorphisms of $\F_q$. 
\end{corollary}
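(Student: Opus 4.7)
The plan is to deduce the corollary directly from Theorem~\ref{TFiniteFields}. The key observation is that every sign preserver is automatically a positivity preserver, and conversely the set of positive multiples of field automorphisms of $\F_q$ is closed under taking inverses, which will give the reverse implication for free.

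For the first containment, every sign preserver is by definition a positivity preserver, so Theorem~\ref{TFiniteFields} forces any sign preserver to take the form $f(x) = c\,\sigma(x)$ for some positive $c \in \F_q$ and some field automorphism $\sigma$ of $\F_q$.

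For the reverse containment, I would fix such an $f$ and note that Theorem~\ref{TFiniteFields} already guarantees that $f$ maps positive definite matrices in $M_n(\F_q)$ to positive definite matrices. What remains is the converse: if $f[A]$ is positive definite, then so is $A$. Here I would observe that since $c \neq 0$ and $\sigma$ is a bijection, the map $f$ is a bijection on $\F_q$ with entrywise inverse given by $f^{-1}(x) = \sigma^{-1}(c^{-1})\,\sigma^{-1}(x)$. The scalar $\sigma^{-1}(c^{-1})$ is itself positive, because reciprocals of nonzero squares are nonzero squares and field automorphisms send nonzero squares to nonzero squares. Hence $f^{-1}$ is again a positive multiple of a field automorphism, and applying Theorem~\ref{TFiniteFields} to $f^{-1}$ yields that $A = f^{-1}[f[A]]$ is positive definite, as desired. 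I do not anticipate any real obstacle here, since the only content beyond invoking Theorem~\ref{TFiniteFields} twice is the trivial algebraic check that the class of positive multiples of field automorphisms is closed under inversion.
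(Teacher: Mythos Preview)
Your proposal is correct and is precisely the argument the paper has in mind: the corollary is stated without proof, the surrounding text simply noting that the positivity preservers of Theorem~\ref{TFiniteFields} automatically enjoy the two-sided sign-preserving property, which is exactly what your closure-under-inversion observation establishes.
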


However, the structure of sign preservers becomes more intricate when restricted to the smaller class of
$2\times 2$ matrices over finite fields:

\begin{theorem}[{\cite[Theorem D]{guillot2024positivity}}]
For a finite field $\F_q$, the sign preservers on $\M_2(\F_q)$ are exactly: 
\begin{enumerate}
\item the bijective monomials, when $q$ is even.
\item the positive multiples of the field automorphisms of $\F_q$, when $q$ is odd.
\end{enumerate}
\end{theorem}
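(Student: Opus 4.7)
My plan is to treat the cases $q$ even and $q$ odd separately, exploiting the very different structure of positivity -- being a nonzero square -- in each setting. In both, I will use small $2\times 2$ test matrices to pin down $f$ step by step, eventually reducing to a classification of multiplicative or ring-theoretic automorphisms of $\F_q$.

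For $q$ even, every element of $\F_q$ is a square, so the PD condition for $\begin{pmatrix} a & b \\ b & c \end{pmatrix}$ is simply $a \neq 0$ and $ac + b^2 \neq 0$ (recall $-1 = 1$). I would first force $f(0) = 0$ and $f(1) \neq 0$ via the test matrices $\mathrm{diag}(1, 0)$ and $\mathrm{diag}(1, 1)$, then rescale $f \mapsto f/f(1)$ (valid since scaling by a nonzero constant preserves nonzero-ness of both entries and determinants) to assume $f(1) = 1$. The family $\begin{pmatrix} 1 & b \\ b & c \end{pmatrix}$ (PD iff $c \neq b^2$) yields the biconditional $c = b^2 \iff f(c) = f(b)^2$, simultaneously giving $f(b^2) = f(b)^2$ and injectivity of $f$ (every element of $\F_q$ being a square, $f(c) = f(b^2)$ forces $c = b^2$). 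Then the general $\begin{pmatrix} a & b \\ b & c \end{pmatrix}$ with $a \neq 0$ yields $ac = b^2 \iff f(a) f(c) = f(b)^2$, which combined with $f(b^2) = f(b)^2$ gives multiplicativity $f(xy) = f(x) f(y)$ on $\F_q^*$. Since $\F_q^*$ is cyclic, $f$ is a power map $x \mapsto x^m$ with $\gcd(m, q-1) = 1$, i.e., a bijective monomial after unscaling.

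For $q$ odd, let $S, N$ denote the nonzero squares and non-squares, so PD means $a \in S$ and $ac - b^2 \in S$. Analogously to the even case, the diagonal matrices $\mathrm{diag}(1, 1), \mathrm{diag}(1, 0), \mathrm{diag}(s, 1)$ yield $f(0) = 0, f(1) \in S, f(S) \subseteq S$; scaling by $f(1)^{-1} \in S$ preserves PD-status and normalizes $f(1) = 1$. The inclusion $f(N) \subseteq N$ follows from $\mathrm{diag}(n, 1)$ being non-PD (giving $f(n) \notin S$), combined with the observation that $\begin{pmatrix} 1 & n \\ n & n^2 \end{pmatrix}$ is not PD (its determinant vanishes) while $f[A] = \mathrm{diag}(1, f(n^2))$ would be PD if $f(n) = 0$. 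Injectivity of $f$ on $\F_q$ comes from comparing $\begin{pmatrix} a & 1 \\ 1 & c \end{pmatrix}$ and $\begin{pmatrix} 1 & a \\ a & c \end{pmatrix}$ for varying $c$, ruling out collisions using the trivial multiplicative stabilizer of $1 + S$ and trivial additive stabilizer of $S$. At this point the biconditional reduces to $c - b^2 \in S \iff f(c) - f(b)^2 \in S$, with the general form $ac - b^2 \in S \iff f(a) f(c) - f(b)^2 \in S$ for $a \in S$.

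The main obstacle will be upgrading these $S$-preserving conditions into multiplicative and then additive structure. From the $a = 1$ specialization and bijectivity I obtain the set identity $f(b^2 + S) = f(b)^2 + S$; applying the more general biconditional with $a = \alpha^2 \in S$ and the substitution $b' = b/\alpha$ yields $f(\alpha^2) f(b/\alpha)^2 = f(b)^2$. Setting $b = \alpha$ gives $f(\alpha^2) = f(\alpha)^2$, and general $b$ gives $f(\alpha \gamma) = \pm f(\alpha) f(\gamma)$. The sign ambiguity is resolved by noting that the alternative -- a character-twisted multiplicative map that flips sign on $N$ -- fails the biconditional via explicit perturbations, so $f$ must be genuinely multiplicative: $f(x) = x^m$ with $\gcd(m, q-1) = 1$. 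The final step forces $m$ to be a Frobenius power $p^j$ (where $q = p^k$): substituting $f(x) = x^m$ into the biconditional yields $c - b^2 \in S \iff c^m - b^{2m} \in S$, and writing $c = d + b^2$ and expanding $(d + b^2)^m - b^{2m}$ via the binomial theorem, by Lucas' theorem all intermediate coefficients $\binom{m}{k}$ for $0 < k < m$ vanish mod $p$ iff $m$ is a power of $p$; in that case the biconditional reduces to the tautology $d \in S \iff d^m \in S$ (using $m$ odd), while otherwise a suitable choice of $b, d$ produces a violation. Unscaling, $f = f(1) \cdot \sigma$ with $\sigma \in \mathrm{Aut}(\F_q)$ and $f(1) \in S$, as claimed.
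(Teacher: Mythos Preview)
This theorem is quoted in the present paper without proof; it is Theorem~D of the cited work \cite{guillot2024positivity}, so there is no in-paper argument to compare against. Evaluating your sketch on its own:

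Your even-$q$ argument is essentially complete. One small repair: $\mathrm{diag}(1,0)$ alone only yields $f(0)\in\{0,f(1)\}$ once you apply $f$ entrywise (the off-diagonal becomes $f(0)$, not $0$); adding the identity matrix as a further test separates $f(0)$ from $f(1)$. After that, your route to injectivity, multiplicativity, and the power-map conclusion is sound.

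The odd-$q$ case has real gaps. First, you assert without justification that the multiplicative stabilizer of $1+S$ in $\F_q^{*}$ and the additive stabilizer of $S$ in $(\F_q,+)$ are trivial. The additive claim has a one-line proof (the stabilizer is a $p$-group whose order must divide $|S|=(q-1)/2$, coprime to $p$), but the multiplicative claim is not obvious and you give no argument. Second, the resolution of the sign ambiguity $f(\alpha\gamma)=\pm f(\alpha)f(\gamma)$ is pure assertion: when $q\equiv 1\pmod 4$ one has $-1\in S$, so the constraints $f(S)=S$, $f(N)=N$, and bijectivity do not by themselves force the sign function to be trivial, and you never exhibit the promised ``explicit perturbation''. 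Third, and most seriously, the final reduction from $f(x)=x^m$ with $\gcd(m,q-1)=1$ to $m$ a $p$-th power is not a proof. You correctly note that $m=p^j$ makes the biconditional a tautology via the Frobenius, but for other $m$ you say only that ``a suitable choice of $b,d$ produces a violation''. The mere non-vanishing of some intermediate binomial coefficient in $(d+b^2)^m-b^{2m}$ does not by itself contradict $d\in S \iff (d+b^2)^m-b^{2m}\in S$; producing an explicit violating pair is precisely the substance of this step, and your sketch does not supply it.
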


While entrywise positivity preservers have been intensively studied for matrices with real or complex entries \cite{BGKP-survey-part-1, BGKP-survey-part-2, khare2022matrix}, a systematic study of sign preservers was not previously carried out for real or complex fields. One of our goals in this paper is to fill this gap.   

\begin{definition}[Sign preservers]\label{defn:sign-preservers}
Let $\Omega \subseteq \C$. We say that a function $f: \Omega \to \C$ is a positive semidefinite {\it sign preserver} on $M_n(\Omega)$ if a Hermitian matrix $A \in M_n(\Omega)$ is positive semidefinite if and only if $f[A]$ is positive semidefinite. We similarly define positive definite sign preservers.
\end{definition}

Studying sign preservers is very natural. For example, as evidenced in \cite{belton2023matrix, belton2026negativity, guillot2024positivity}, generic entrywise preservers of positive semidefiniteness generally preserve supplementary structure such as strict positivity. As the cone of positive semidefinite matrices naturally decomposes into its positive and non-positive parts, it makes sense to examine how the entrywise action of a function can preserve each part of the cone separately.

As a consequence of our main results, we obtain the following analog of Corollary \ref{CsignFiniteFields} for the real and complex fields. Surprisingly, field automorphisms also appear naturally in this setting. Given how elusive solving the entrywise positivity preserver problem in fixed dimension has been as well as the general lack of results in the area in fixed dimension, it is interesting that the supplementary condition imposed by sign preservers is enough to obtain a full characterization.

\begin{theorem}\label{thm:main}
Fix $n\geq 3$, and let $\F = \R$ or $\C$. If $f: \F \to \F$, then the following are equivalent: 
\begin{enumerate}
\item $f$ is a positive definite sign preserver on $M_n(\F)$.
\item $f$ is a positive semidefinite sign preserver on $M_n(\F)$. 
\item $f$ is a positive multiple of a continuous field automorphism \footnote{It is well-known that continuous field automorphisms of $\C$ are precisely the identity map and the complex conjugation map. On the other hand, there do exist wild automorphisms of $\C$ that are far from continuous. We refer to Yale's paper~\cite{Y66} for details.} of $\F$, i.e., $f(x) = cx$ for some $c > 0$ when $\F = \R$, and $f(z) = cz$ or $f(z) = c \overline{z}$ for some $c > 0$ when $\F = \C$. 
\end{enumerate}
\end{theorem}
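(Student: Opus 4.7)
The implications $(3) \Rightarrow (1)$ and $(3) \Rightarrow (2)$ are immediate: positive multiples of continuous field automorphisms act as $A \mapsto cA$ or $A \mapsto c\overline{A}$, preserving positive (semi)definiteness in both directions. The main content is $(2) \Rightarrow (3)$, while $(1) \Rightarrow (3)$ follows analogously, with routine perturbation arguments once monotonicity of $f$ on $\R$ is in place.

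For $(2) \Rightarrow (3)$, I first extract elementary properties of $f$: testing on $cJ_n$ gives $f(c) \ge 0$ for $c \ge 0$ and $f(c) < 0$ for $c < 0$, while Hermiticity of $f[A]$ forces $f(\overline{z}) = \overline{f(z)}$, so $f(\R) \subseteq \R$. To prove $f(0) = 0$, I assume for contradiction that $f(0) > 0$ and combine two types of $3 \times 3$ tests: the non-PSD matrix $\bigl(\begin{smallmatrix} s & 1 & 1 \\ 1 & 0 & 0 \\ 1 & 0 & 0 \end{smallmatrix}\bigr)$ forces $f$ to be bounded above on positives, while the rank-one PSD matrix $\bigl(\begin{smallmatrix} u^2 & uv & 0 \\ uv & v^2 & 0 \\ 0 & 0 & 0 \end{smallmatrix}\bigr)$ (together with its scaled variants) yields a multiplicative-type functional equation for $f/f(1)$ on positives. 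The combination of a bounded, multiplicative-type $f$ on positives forces $f$ to be constant on positives, contradicting strict monotonicity obtained from sign preservation on $\bigl(\begin{smallmatrix} a & b \\ b & a \end{smallmatrix}\bigr)$-type matrices. Hence $f(0) = 0$, and the embedding $B \mapsto B \oplus 0_{n-2}$ shows that $f$ restricts to a sign preserver on $M_2(\F)$; invoking the paper's $2 \times 2$ classification, $f$ on $(0, \infty)$ is of the form $cx^\beta$ with $c > 0$, $\beta > 0$, with a suitable extension to negatives (and analogously in the complex case, involving a phase homomorphism on the unit circle).

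To force $\beta = 1$ and to pin down the phase homomorphism, I exploit the hypothesis $n \ge 3$ via the parametric $3 \times 3$ families
\[
A_r = \begin{pmatrix} 1 & r & r \\ r & 1 & r \\ r & r & 1 \end{pmatrix} \quad\text{and}\quad A(z,t) = \begin{pmatrix} 1 & t & t \\ t & 1 & z \\ t & \overline{z} & 1 \end{pmatrix}.
\]
The matrix $A_r$ is PSD iff $r \in [-1/2, 1]$; under the candidate $f$, $f[A_r]$ has the same form with parameter $\sgn(r)|r|^\beta$, so matching thresholds on $r < 0$ forces $\beta = 1$ and hence $f(x) = cx$ on $\R$. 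For the complex case, an additional $2 \times 2$ boundary analysis (with zero padding) gives $|f(z)| = c|z|$, so one may write $f(z) = c|z|e^{i\phi(\arg z)}$; then $A(z,t)$ (for $t \in [-1,1]$ and $|z| \le 1$) is PSD iff $1 - |z|^2 \ge 2t^2(1 - \mathrm{Re}(z))$, and the same threshold holds for $f[A(z,t)]$ with $\mathrm{Re}(z)$ replaced by $|z|\cos\phi(\arg z)$, so matching PSD boundaries forces $\cos\phi(\theta) = \cos\theta$, i.e., $\phi(\theta) \equiv \pm\theta \pmod{2\pi}$. Additivity of $\phi$, obtained from the three-phase matrix $\bigl(\begin{smallmatrix} 1 & e^{i\alpha} & e^{i\beta} \\ e^{-i\alpha} & 1 & e^{i\gamma} \\ e^{-i\beta} & e^{-i\gamma} & 1 \end{smallmatrix}\bigr)$, which is PSD iff $\alpha + \gamma \equiv \beta \pmod{2\pi}$, then rules out mixed signs and yields $f(z) = cz$ or $f(z) = c\overline{z}$.

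The main obstacle is the $f(0) = 0$ step, which calls for a delicate interplay between a $3 \times 3$-induced boundedness constraint and a $3 \times 3$-induced multiplicativity-type functional equation; once that is in place, the reduction to $M_2$ and the elimination of $\beta \ne 1$ and of non-trivial phase maps via $A_r$ and $A(z,t)$ proceed cleanly.
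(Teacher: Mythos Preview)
Your approach is genuinely different from the paper's and contains a real simplification, but the write-up has two gaps worth flagging.

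\textbf{What differs and what it buys.} The paper reduces from $M_n$ to $M_2$ via the extension $\E(A;x)=\bigl(\begin{smallmatrix}A & v^*\\ v & x\end{smallmatrix}\bigr)$ (repeating the last row and bumping the corner), which introduces no zeros and hence requires no knowledge of $f(0)$. You instead reach $M_2$ by zero-padding, which forces you to prove $f(0)=0$ first. Once at $M_2$, both routes invoke Theorem~\ref{main_thm_1} to obtain $f(x)=\alpha\,\sgn(x)|x|^\beta$. For the step $\beta=1$, the paper's Lemma~\ref{lemma:sign_power_real} is heavy (FitzGerald--Horn, Vandermonde/Jain rank arguments), whereas your $A_r=\bigl(\begin{smallmatrix}1&r&r\\ r&1&r\\ r&r&1\end{smallmatrix}\bigr)$ test is elegant and elementary: since $A_r$ is positive semidefinite iff $r\in[-\tfrac12,1]$ and $f[A_r]=\alpha A_{\sgn(r)|r|^\beta}$, matching the negative threshold $-\tfrac12$ with $-2^{-1/\beta}$ forces $\beta=1$. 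This works precisely because $\F\supseteq\R$ contains negative reals; it would not prove the paper's more general Theorem~\ref{main_thm_2} over $\Omega\subset(0,\infty)$, but it is a clean shortcut for Theorem~\ref{thm:main} itself. In the complex case, the paper proves that $f$ is multiplicative and \emph{continuous} on $S^1$ and then cites the classification of continuous circle endomorphisms; your route via $A(z,t)$ (giving $\Re(f(z)/c)=\Re z$, hence $f(z)\in\{cz,c\bar z\}$ pointwise) plus the three-phase rank-one matrix is more direct and avoids the continuity step.

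\textbf{Gap 1: circularity in the $f(0)=0$ step.} Your $3\times3$ tests live in $M_3$, but the hypothesis is on $M_n$. For $n>3$ you need to embed, yet zero-padding of a $3\times3$ test does \emph{not} transmit the ``not PSD $\Rightarrow$ $f[\cdot]$ not PSD'' constraint in the way you want: e.g.\ the padded version of $\bigl(\begin{smallmatrix}s&1&1\\1&0&0\\1&0&0\end{smallmatrix}\bigr)$ has $f[\cdot]$ non-PSD for reasons unrelated to $f(s)$ once $f(1)>f(0)$, so no bound on $f(s)$ follows. The fix is either to run genuinely $n\times n$ tests (e.g.\ $M_s'$ with first row $(s,1,\dots,1)$ and zeros elsewhere, and $vv^T$ with $v=(a,b,\dots,b)$, from which multiplicative midconvexity and then continuity of $f$ on $(0,\infty)$ follow as in Proposition~\ref{2by2preservers}) or simply to adopt the paper's $\E(A;x)$ reduction, which makes the whole $f(0)=0$ detour unnecessary.

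\textbf{Gap 2: consistency off the unit circle.} Your three-phase matrix shows that $\phi$ is additive \emph{on $S^1$}, and together with $\phi(\theta)\in\{\pm\theta\}$ this pins down $f$ on $S^1$. But you have not linked $f(re^{i\theta})$ to $f(e^{i\theta})$: the $A(z,t)$ test gives $f(z)\in\{cz,c\bar z\}$ for each $z$ separately, and nothing yet prevents $f(e^{i\theta})=ce^{i\theta}$ while $f(2e^{i\theta})=2ce^{-i\theta}$. One more $3\times3$ test with off-diagonal entries $z_1,z_2$ of different moduli and a real third entry (so that the determinant isolates $\Re(z_1\bar z_2)$ versus $\Re\bigl(\tfrac{f(z_1)}{c}\overline{\tfrac{f(z_2)}{c}}\bigr)$) closes this; alternatively, the paper's argument that $f(rX)=rf(X)$ for $|X|=1$ handles it directly.
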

Note that a function $f$ preserves positive definiteness when applied entrywise to positive definite matrices if and only if it preserves the (positive) sign of the leading principal minors of the matrices. An interesting consequence of Theorem \ref{thm:main} is that, for matrices of dimension $3$ or above, a function $f$ is a sign preserver (as defined in Definition \ref{defn:sign-preservers}) if and only if it preserves the sign (negative, positive, or $0$) of all leading principal minors. For real matrices, sign preservers also coincide with the functions that preserve the sign of all square minors. 

As noted, a complete characterization of entrywise positivity preservers on \( n \times n \) matrices remains unknown for any fixed \( n \geq 3 \); strikingly, no conjectures have even been proposed to date, though notable partial results are available in \cite{BGKP-fixeddim,fitzgerald1977fractional,Khare-Tao}. Theorem~\ref{thm:main} has the following immediate notable consequence.

\begin{corollary}
Let $n\geq 3$, and let $\F=\R$ or $\C$. Any entrywise positivity preserver on $M_n(\F)$ that is not a positive scalar multiple of the most basic positivity-preserving transforms
\[
A \mapsto A \quad \text{and} \quad A \mapsto \overline{A}=A^T.
\]
must necessarily send at least one matrix that is not positive semidefinite to one that is.
\end{corollary}

Theorem \ref{thm:main} also connects naturally with recent work of Belton, Guillot, Khare, and Putinar \cite{belton2026negativity}, which classifies entrywise transforms in the \emph{dimension-free} setting sending Hermitian matrices with \emph{at most} \( k \) negative eigenvalues to those with \emph{at most} \( \ell \). In the case \( k = \ell = 1 \), they show that the only such preservers are affine maps of the form
\[
z \mapsto a + b z + c \overline{z}
\]
for \( a, b, c \in \mathbb{R} \) such that $\mathbbm{1}_{a < 0} + \mathbbm{1}_{b > 0} + \mathbbm{1}_{c > 0} \leq 1$, and where $\mathbbm{1}_A$ is the indicator function for condition $A$. Theorem~\ref{thm:main} offers a complementary result in the \emph{fixed-dimensional} setting for any fixed \( n \geq 3 \): an entrywise transform sends Hermitian matrices with \emph{at least} \( k = 1 \) negative eigenvalues to matrices with \emph{at least} \( \ell = 1 \) negative eigenvalues, and maps those with \emph{exactly} \( k = 0 \) negative eigenvalues to matrices with \emph{exactly} \( \ell = 0 \) negative eigenvalues, if and only if it is one of
\[
z \mapsto c z \quad \text{and} \quad z \mapsto c \overline{z}
\]
for some positive real constant \( c \). 

A natural variant of the positivity and sign preserver problems is to consider the entrywise action of functions on positive (semi)definite matrices with a given structure of zeros. Given an integer $n\geq 1$, let $G = (V,E)$ be a simple undirected graph on the vertex set $V = \{1, \dots, n\}$ with edge set $E \subseteq V \times V$. We define 
\[
M_G(\Omega):=\{A=(a_{ij})\in M_n(\C): \mbox{$a_{ij}=0$ for all $i \ne j$ such that  $(i,j) \not\in E$, and $a_{ij} \in \Omega$ otherwise}\}. 
\]

Hence, an entry $a_{ij}$ of $A \in M_G(\Omega)$ is constrained to be zero if $(i,j) \not \in E$ and $i \ne j$. The other entries are not constrained and, in particular, are allowed to be $0$ if $0 \in \Omega$. 
For a function $f:\Omega\to \C$, define $f_{G}[A]\in \M_n(\C)$ by
\[
(f_{G}[A])_{ij} := 
\begin{cases}
f(a_{ij}) & \mbox{if either $i=j$ or $(i,j) \in E$,}\\
0 & \mbox{otherwise.}
\end{cases}
\]

Functions that preserve positive semidefinitness when restricted to matrices in $M_G$ were previously studied by Guillot, Khare, and Rajaratnam \cite{GKR-critG, GKR-sparse, guillot2015functions}. In particular, in \cite{GKR-sparse}, they obtained a full characterization of the positivity preservers on $M_G([0,R))$ for an arbitrary tree $G$. Power functions that preserve positivity on $M_G$ were also studied in \cite{GKR-critG}, where the problem was fully solved for chordal graphs, as well as for some families of non-chordal graphs such as cycles and bipartite graphs. 

\begin{definition}[Sign preservers associated with graphs]
Given $\Omega \subseteq \C$, a function $f: \Omega \to \C$, and a simple undirected graph $G$ on $\{1,\dots,n\}$, we say $f_G$ is a positive semidefinite sign preserver on $M_G(\Omega)$ if a Hermitian matrix $A \in M_G(\Omega)$ is positive semidefinite if and only if $f_G[A]$ is positive semidefinite. We define positive definite sign preservers on $M_G(\Omega)$ in an analogous way. 
\end{definition}

In particular, observe that the special case $G = K_n$ (the complete graph on $n$ vertices) corresponds to the problems studied above. When $G$ has several connected components, say $G_1, \dots, G_k$,  observe that every matrix in $M_G(\Omega)$ can be reorganized in block diagonal form with diagonal blocks in $M_{G_i}(\Omega)$. It follows immediately that $f_G$ is a sign preserver on $M_G(\Omega)$ if and only if it is a sign preserver on $M_{G_i}(\Omega)$ for all $i=1,\dots,k$. Hence, it suffices to characterize sign preservers for connected graphs. We assume, without loss of generality, that graphs are connected throughout the paper.

\subsection{Main results}

We now state our main results, which provide a full characterization of sign preservers on $M_n(\Omega)$ for any fixed $n \geq 2$, for real and complex domains $\Omega$ satisfying mild conditions, as well as a full characterization of sign preservers on $M_G(\Omega)$ for any graph $G$ when $\Omega = \R, \C$, or $(0,\infty)$. In particular, Theorem~\ref{thm:main} follows immediately from Theorems~\ref{main_thm_2} and~\ref{main_thm_3} below. We begin with a technical definition.

\begin{definition}\label{def_pd_psd_type}
We call $\Omega\subseteq \C$ {\it reflection-symmetric} if $z \in \Omega$ implies $\overline{z} \in \Omega$, and {\it modulus-closed} if $z \in \Omega $ implies $|z| \in \Omega$. Moreover, if $I:=\Omega \cap [0,\infty)$, then we say that $\Omega$ is of
\begin{itemize}
\item {\it pd-type} if for all $z \in  \Omega \setminus I$, there exists $\epsilon>0$ such that $|z|+\epsilon \in I$; 
\item {\it psd-type} if for all $z \in  \Omega \setminus I$, there exists $\delta>0$ such that $|z|-\delta \in I$. 
\end{itemize}
\end{definition}

It may initially seem that the conditions imposed on \(\Omega\) in Definition~\ref{def_pd_psd_type}, and in our main results below, are somewhat peculiar. However, in light of the question under consideration, these assumptions are both necessary and mild. They are mild in the sense that the requirements on \(\Omega\) are quite flexible: for instance, \(\Omega\) can be any open real interval, an open disc, an open annulus, a star-shaped region, or a (finite) union of half-lines, provided the $\Omega$ is reflection symmetric, moduli-closed, and satisfies the other mild condition of pd- or psd-type. These examples illustrate the broad class of domains to which our results apply. On the other hand, these conditions are also necessary. For example, if \(\Omega \subseteq \mathbb{R}\) is of pd- or psd-type and contains both negative and positive elements, then it must satisfy \(|\inf \Omega| \leq \sup \Omega\). This inequality arises naturally when analyzing positivity preservers on \(M_2(\Omega)\), since for any \(x < -\sup \Omega\), no matrix \(A \in M_2(\Omega)\) that is positive semidefinite can include \(x\) as an entry. Consequently, the values of a function \(f\) on the interval \((-\infty, -\sup \Omega)\) are irrelevant for determining whether \(f\) preserves positivity on \(M_2(\Omega)\).

Our first main result fully characterizes sign preservers on $M_2(\Omega)$. 

\begin{utheorem}\label{main_thm_1}
Let $\Omega \subseteq \C$ be reflection symmetric, modulus-closed, of pd-type, and such that $I:=\Omega \cap [0,\infty)$ is an interval. Then the following are equivalent for a function $f:\Omega\to \C$:
\begin{enumerate}
    \item A Hermitian matrix $A\in M_2(\Omega)$ is positive definite if and only if $f[A]$ is positive definite. 
    \item There exist real $\alpha,\beta>0$ such that the following hold:
    \begin{itemize}
        \item For all $x\in \Omega\cap \R$, $f(x) =  \alpha\sgn(x) |x|^{\beta}$.
        \item For all $z \in \Omega\setminus\R$, we have
        \begin{align*}
         |f(z)| = \alpha|z|^\beta \quad \mbox{and} \quad f(\overline{z})= \overline{f(z)}.   
        \end{align*} 
    \end{itemize}
\end{enumerate}
The result holds verbatim with ``pd-type'' replaced by ``psd-type'' and ``definite'' by ``semidefinite''.
\end{utheorem}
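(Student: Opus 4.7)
The implication $(2)\Rightarrow(1)$ is a direct verification: for Hermitian $A=\begin{pmatrix} a & z \\ \overline{z} & b \end{pmatrix}\in M_2(\Omega)$, the identity $f(\overline{z})=\overline{f(z)}$ makes $f[A]$ Hermitian, the diagonal entries $f(a),f(b)$ share signs with $a,b$, and the $2\times 2$ minor
\[
f(a)f(b)-|f(z)|^{2}=\alpha^{2}\bigl((ab)^{\beta}-|z|^{2\beta}\bigr)
\]
has the same sign as $ab-|z|^{2}$, so every leading principal minor of $f[A]$ is sign-aligned with that of $A$.

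For $(1)\Rightarrow(2)$ in the pd-type case, the plan is to first pin down $f$ on $I\cap(0,\infty)$ as a power function. Testing the iff on $\mathrm{diag}(a,a)$ forces $f(I\cap(0,\infty))\subseteq(0,\infty)$ (and in particular real-valued), and on $\bigl(\begin{smallmatrix} a & r \\ r & a\end{smallmatrix}\bigr)$ forces $f$ to be strictly increasing on $I\cap(0,\infty)$. Next, the iff applied to $\bigl(\begin{smallmatrix} a & r \\ r & b\end{smallmatrix}\bigr)$ with $a,b,r\in I\cap(0,\infty)$ gives
\[
ab>r^{2}\ \Longleftrightarrow\ f(a)f(b)>f(r)^{2}.
\]
A short perturbation argument (taking $a+\epsilon\in I$ and letting $\epsilon\to 0^{+}$) combined with strict monotonicity and the fact that a monotone function has at most countably many discontinuities yields the multiplicative identity $f(\sqrt{ab})^{2}=f(a)f(b)$ for a dense set of pairs $(a,b)$, and then everywhere by monotone interpolation. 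Setting $g(x):=\log f(e^{x})$, this is Jensen's midpoint equation $g\!\left(\tfrac{u+v}{2}\right)=\tfrac{g(u)+g(v)}{2}$, and monotone solutions of Jensen's equation on an interval are affine, giving $f(x)=\alpha x^{\beta}$ on $I\cap(0,\infty)$ for some $\alpha,\beta>0$.

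Next, I would extend to $\Omega$. For $z\in \Omega$ with $|z|>0$, the pd-type hypothesis provides $a\in I$ with $a>|z|$, so $A=\bigl(\begin{smallmatrix} a & z \\ \overline{z} & a\end{smallmatrix}\bigr)$ is PD; Hermiticity of $f[A]$ then forces $f(\overline{z})=\overline{f(z)}$, and varying $a,b\in I$ so that $ab$ crosses $|z|^{2}$ pins down $|f(z)|=\alpha|z|^{\beta}$. For a negative real $-b\in \Omega$ only the modulus is so far determined; to fix the sign, I would use pd-type to pick $b+\epsilon\in I$ and form the non-PD matrix $A=\bigl(\begin{smallmatrix} -b & b \\ b & b+\epsilon\end{smallmatrix}\bigr)$. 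If one assumed $f(-b)=+\alpha b^{\beta}$, then $f[A]$ would have positive diagonal and determinant $\alpha^{2}b^{\beta}\bigl((b+\epsilon)^{\beta}-b^{\beta}\bigr)>0$, making it PD --- contradicting the iff. Therefore $f(-b)=-\alpha b^{\beta}$.

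The psd-type case is structurally identical, with the iff becoming non-strict ($\geq$) on both sides; this slightly simplifies the multiplicative identity step. Additionally, zero-pivot matrices $\bigl(\begin{smallmatrix} 0 & r \\ r & b\end{smallmatrix}\bigr)$ with $r\neq 0$ are PSD failures forcing $f(0)=0$ when $0\in I$, consistent with the power-law formula at the origin. I expect the main technical obstacle to be the extraction of the multiplicative identity $f(\sqrt{ab})^{2}=f(a)f(b)$ in the pd-type case: the strict iff supplies only inequalities in both directions, and upgrading this to the clean identity requires the perturbation-plus-monotone-interpolation argument outlined above, after which the Jensen characterization of monotone solutions gives the power law on $I\cap(0,\infty)$ cleanly. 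The remaining extensions to $\Omega$ are essentially mechanical, with each of the hypotheses (reflection symmetry, modulus closure, pd-/psd-type) supplying exactly the family of test matrices needed.
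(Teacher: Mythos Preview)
Your approach is essentially the paper's, and you have correctly identified the one genuinely delicate step. The paper handles the extraction of $f(\sqrt{ab})^{2}=f(a)f(b)$ a bit differently and more cleanly than your ``perturbation plus monotone interpolation'': it first uses only the \emph{forward} direction of the sign-preserver hypothesis (packaged as Proposition~2.1) to show that $f|_{I\setminus\{0\}}$ is positive, increasing, and multiplicatively \emph{midconvex}, and observes that midconvexity together with local boundedness already forces continuity on $\intr(I)$; only then is the \emph{reverse} direction applied to the singular matrix $\bigl(\begin{smallmatrix} a & \sqrt{ab}\\ \sqrt{ab} & b\end{smallmatrix}\bigr)$ to get $f(a)f(b)\le f(\sqrt{ab})^{2}$, whence equality everywhere. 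This sidesteps the passage ``identity on a dense set $\Rightarrow$ identity everywhere,'' which is the point where your sketch is vaguest (monotonicity alone does not obviously push a dense midpoint identity to a global one without first establishing continuity). You also omit two small endpoint checks that the paper does carry out: showing $f(0)=0$ when $0\in I$ in the pd case (from $|f(0)|<f(a)$ for all $a\in I\setminus\{0\}$ together with $f^{+}(0)=0$), and left-continuity at the upper endpoint $\upsilon$ when $\upsilon\in I$ (via the non-PD test matrix $\bigl(\begin{smallmatrix}\upsilon & \upsilon-\epsilon\\ \upsilon-\epsilon & \upsilon-3\epsilon\end{smallmatrix}\bigr)$). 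The remaining test matrices you propose for $f(\overline z)=\overline{f(z)}$, $|f(z)|=\alpha|z|^{\beta}$, and the sign on negative reals match the paper's.
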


Notice that the sign preservers over $\M_2(\R)$ are coincidentally the positive multiples of the \textit{odd extensions} of power functions, defined for $\beta > 0$ by 
\begin{align*}
    \psi_{\beta}(x):=\sgn(x)|x|^{\beta} \quad \mbox{for all } x\in \R\setminus\{0\},
\end{align*}
where $\psi_{\beta}(0):=0$. These extensions naturally arise in the development of the theory of \textit{critical exponents} of graphs, a framework connecting combinatorics and analysis, first studied by FitzGerald and Horn \cite{fitzgerald1977fractional}, and later expanded upon by Guillot, Khare, and Rajaratnam \cite{guillot2015complete, GKR-critG}. 

Our next main result addresses sign preservers on real domains for matrices of a fixed dimension $n \geq 3$.

\begin{utheorem}\label{main_thm_2}
Fix an integer $n\geq 3$. Let $\Omega \subseteq \R$ be modulus-closed, of pd-type, and such that $I:=\Omega \cap [0,\infty)$ is an interval. The following are equivalent for a function $f:\Omega\to \C$:
\begin{enumerate}
\item A symmetric matrix $A\in M_n(\Omega)$ is positive definite if and only if $f[A]$ is positive definite.
    \item There exists real $\alpha>0$ such that $f(x) =  \alpha x$ for all $x \in \Omega$.
\end{enumerate}
The result holds verbatim with ``pd-type'' replaced by ``psd-type'' and ``definite'' by ``semidefinite''.
\end{utheorem}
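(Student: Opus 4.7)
Proof Proposal. The plan is to reduce to Theorem~\ref{main_thm_1} (the $2 \times 2$ case) and use the flexibility of $n \geq 3$ to force the exponent in the resulting power form to equal $1$. The crux is establishing $f(0) = 0$.

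\textit{Step 1 (reality, monotonicity, $f(0) = 0$).} Every $x \in \Omega$ arises as an entry of some PD matrix in $M_n(\Omega)$: scalar matrices $tI_n$ for $x = t \geq 0$; for $x < 0$, the $2 \times 2$ completion $\left(\begin{smallmatrix} y & x \\ x & y \end{smallmatrix}\right)$ with $y \in \Omega$ and $y > |x|$ (which exists by the pd-type assumption) extended by block-diagonal padding. Hermiticity of PD matrices forces $f(x) \in \R$. The equicorrelation matrices $tI_n + s(J_n - I_n)$ together with $A = tI_n$ yield, via the sign preserver property, strict monotonicity of $f$ on $\Omega \cap [0, \infty)$ and the lower bound $f(t) > (n-1)|f(0)|$ for $t > 0 \in \Omega$. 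Write $a := f(0)$. If $a > 0$, take $A = \operatorname{diag}(0, t, \ldots, t)$ (PSD, not PD); a row-reduction computation gives the leading $k \times k$ principal minors of $f[A]$ as $a(f(t) - a)^{k-1} > 0$ for every $k$, so $f[A]$ is PD --- a contradiction. If $a < 0$, consider the tridiagonal family $A_{\epsilon,\delta} = \left(\begin{smallmatrix} \epsilon & \delta & 0 \\ \delta & \epsilon & \delta \\ 0 & \delta & \epsilon \end{smallmatrix}\right)$ embedded in $M_n(\Omega)$ via block-diagonal padding by $cI_{n-3}$ (for any $c > 0$ in $\Omega$). It is PD iff $\delta < \epsilon/\sqrt{2}$, and direct expansion gives
\[
\det f[A_{\epsilon,\delta}] = (f(\epsilon)-a)\bigl[f(\epsilon)(f(\epsilon)+a) - 2f(\delta)^2\bigr].
\]
For $\delta < \epsilon/\sqrt{2}$, the sign preserver forces $f[A_{\epsilon,\delta}]$ to be PD (as a principal submatrix of the PD image of the extended matrix), and since $f(\epsilon) - a > 0$, we obtain $f(\delta)^2 < f(\epsilon)(f(\epsilon)+a)/2 < f(\epsilon)^2/2$. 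Iterating along a sequence $\epsilon_0 > \epsilon_1 > \cdots$ with $\epsilon_{m+1} < \epsilon_m/\sqrt{2}$ yields $f(\epsilon_m)^2 < f(\epsilon_0)^2 / 2^m \to 0$, contradicting the lower bound $f(\epsilon_m) > (n-1)|a| > 0$. Hence $a = 0$.

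\textit{Step 2 (reduction to Theorem~\ref{main_thm_1} and forcing $\beta = 1$).} With $f(0) = 0$, for every $A \in M_2(\Omega)$ the block-diagonal extension $\tilde A := A \oplus cI_{n-2}$ (with $c \in \Omega$ chosen so $c, f(c) > 0$) satisfies $f[\tilde A] = f[A] \oplus f(c)I_{n-2}$, so $\tilde A$ is PD iff $f[\tilde A]$ is PD iff $f[A]$ is PD. Thus $f$ is a $2 \times 2$ sign preserver, and Theorem~\ref{main_thm_1} gives $f(x) = \alpha\sgn(x)|x|^\beta$ on $\Omega$ for some $\alpha, \beta > 0$. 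Returning to $A_{\epsilon,\delta}$ with $f(x) = \alpha x^\beta$ on $\Omega \cap [0,\infty)$ gives $\det f[A_{\epsilon,\delta}] = \alpha^3 \epsilon^\beta(\epsilon^{2\beta} - 2\delta^{2\beta})$, so $f[A_{\epsilon,\delta}]$ is PD iff $\delta < \epsilon \cdot 2^{-1/(2\beta)}$. Matching this with the PD threshold $\delta < \epsilon/\sqrt{2}$ forces $2^{-1/(2\beta)} = 2^{-1/2}$, i.e.\ $\beta = 1$, yielding $f(x) = \alpha x$ on $\Omega$. The PSD version follows the same template with psd-type in place of pd-type and the PSD half of Theorem~\ref{main_thm_1} applied.

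\textit{Main obstacle.} The delicate point is the case $a < 0$ in Step 1: finding a PD family whose entrywise image admits an iterative contraction that drives $f$-values below the positive lower bound produced by monotonicity is the central insight, and the $3 \times 3$ tridiagonal matrix is essentially the minimal gadget that makes this work (hence the hypothesis $n \geq 3$ is genuinely used).
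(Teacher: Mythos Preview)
Your argument tacitly assumes $0 \in \Omega$ throughout, and this is a genuine gap. Every key construction you use --- the scalar matrices $tI_n$, the tridiagonal matrices $A_{\epsilon,\delta}$, and the block-diagonal embeddings $A \oplus cI_{n-2}$ --- places $0$ in an off-diagonal position, so none of them lie in $M_n(\Omega)$ unless $0 \in \Omega$. But the hypotheses of the theorem allow domains such as $\Omega = (0,\infty)$ or $\Omega = (a,b)$ with $a > 0$ (these are trivially modulus-closed and vacuously of pd-type, with $I$ an interval), and for such $\Omega$ your proof never gets started: you cannot even establish reality or monotonicity of $f$ via your test matrices, let alone reduce to the $2\times 2$ case or run the $\beta = 1$ threshold argument. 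The paper avoids this by using the extension $\E(A;x)$ of Lemma~\ref{LPDextension}, which repeats the last row of $A$ and only adjusts the new diagonal entry to some $x \in I$; no zeros are introduced, so the embedding stays inside $M_n(\Omega)$ regardless of whether $0 \in \Omega$. With that device the paper reduces recursively from $M_n$ down to $M_2$, applies Theorem~\ref{main_thm_1}, and then invokes Lemma~\ref{lemma:sign_power_real} (FitzGerald--Horn and related results) to force $\beta = 1$.

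When $0 \in \Omega$, your proof is correct and your route to $\beta = 1$ is in fact more elementary than the paper's: the $3\times 3$ tridiagonal threshold comparison $2^{-1/(2\beta)} = 2^{-1/2}$ is a clean direct computation, bypassing the appeal to fractional-power theorems in Lemma~\ref{lemma:sign_power_real}. So the fix is not to abandon your approach but to supply a separate argument for the case $0 \notin \Omega$ --- one whose test matrices have every entry in $\Omega$.
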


Our next main result characterizes the sign preservers for annuli in the complex plane. Here, by an annulus, we mean a set of the form $\{z \in \C:|z|\in I\}$ for an interval $I\subseteq [0,\infty)$. We note that our proof technique works for more general domains. However, for simplicity, we only state the result for annuli, and refer to Remark~\ref{remarkC} for more details.    

\begin{utheorem}\label{main_thm_3}
Fix an integer $n\geq 3$. Let $\Omega\subseteq \C$ be an annulus with $I:=\Omega \cap [0,\infty)$ an interval, and let $f:\Omega\to \C$. If $\Omega$ is of pd-type, then the following are equivalent:
\begin{enumerate}
\item A Hermitian matrix $A\in M_n(\Omega)$ is positive definite if and only if $f[A]$ is positive definite.
        \item There exists real $\alpha>0$ such that $f(z) \equiv \alpha z$ or $f(z) \equiv \alpha \overline{z}$ on $\Omega$.
\end{enumerate}
The result holds verbatim with ``pd-type'' replaced by ``psd-type'' and ``definite'' by ``semidefinite''.
\end{utheorem}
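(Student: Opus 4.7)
The direction (2) $\Rightarrow$ (1) is immediate: $z \mapsto \alpha z$ scales the spectrum of a Hermitian matrix by $\alpha > 0$, and for Hermitian $A$ the entrywise conjugate $\overline A$ equals $A^T$, which has the same eigenvalues as $A$. Both maps therefore preserve positive (semi)definiteness in both directions. The substance of the theorem lies in (1) $\Rightarrow$ (2); the plan is to reduce to Theorem~\ref{main_thm_1} and then exploit the additional rigidity afforded by $n \geq 3$.

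The first step would be to show that $f$ is also a sign preserver on $M_2(\Omega)$. Given Hermitian $B \in M_2(\Omega)$, I would embed $B$ as the top-left block of $A = \begin{pmatrix} B & V \\ V^* & D \end{pmatrix} \in M_n(\Omega)$. If $B \succ 0$, a Schur-complement construction (using modulus-closedness and the pd-type hypothesis to keep entries inside the annulus) produces $V, D$ with $A \succ 0$; then $f[A] \succ 0$ and its principal submatrix $f[B]$ is positive definite. For the reverse direction I would choose $V, D$ so that the Schur complement of $f[A]$ with respect to $f[B]$ is positive definite whenever $f[B] \succ 0$; in that case $f[A] \succ 0$, so by hypothesis $A \succ 0$ and hence $B \succ 0$. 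This reduction is the main technical obstacle, because the annulus precludes the standard direct-sum trick available when $0 \in \Omega$. Once the reduction is in place, Theorem~\ref{main_thm_1} applies (the annulus is reflection symmetric and modulus closed, $I$ is an interval, $\Omega$ is of pd-type), yielding $\alpha, \beta > 0$ with $f(x) = \alpha \sgn(x)|x|^\beta$ on $\Omega \cap \R$ and with $|f(z)| = \alpha|z|^\beta$, $f(\overline z) = \overline{f(z)}$ on $\Omega \setminus \R$.

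To force $\beta = 1$ I would test the sign preserver on matrices $A = a I_n + b(J_n - I_n)$ with $a \in I$ and $b \in \Omega \cap \R$: their eigenvalues $a - b$ and $a + (n-1) b$ control PD-ness while $f[A]$ has eigenvalues $\alpha(a^\beta - \sgn(b)|b|^\beta)$ and $\alpha(a^\beta + (n-1)\sgn(b)|b|^\beta)$. Comparing PD thresholds as $(a, b)$ varies, augmented by appropriately scaled variants of the FitzGerald--Horn matrices $(1 + \epsilon ij)_{i,j=1}^n$ from Theorem~\ref{thm:fitz_horn_fractional}, rules out every $\beta \neq 1$; the pd-type hypothesis ensures the requisite entries remain in $\Omega$.

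With $\beta = 1$ in hand, the final step would be to pin down the complex structure via $3 \times 3$ Hermitian submatrices
\[
A = \begin{pmatrix} a & b & c \\ \overline b & a & d \\ \overline c & \overline d & a \end{pmatrix}, \qquad a \in I,\ b, c, d \in \Omega,
\]
whose determinant equals $a^3 - a(|b|^2 + |c|^2 + |d|^2) + 2\operatorname{Re}(b \overline c d)$. Because $|f(\cdot)| = \alpha|\cdot|$ makes the quadratic parts of $\det A$ and $\det f[A]$ proportional by the factor $\alpha^3$, letting $a$ vary across $I$ promotes the biconditional $\det A > 0 \iff \det f[A] > 0$ into the identity
\[
\alpha^3 \operatorname{Re}(b \overline c d) = \operatorname{Re}\bigl(f(b) \overline{f(c)} f(d)\bigr).
\]
Specializing $c = d = b$ and invoking the modulus identity yields $\operatorname{Re}(f(b)/\alpha) = \operatorname{Re}(b)$; combined with $|f(b)/\alpha| = |b|$, this forces $f(b)/\alpha \in \{b, \overline b\}$ pointwise. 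Uniformity then comes from a rigidity argument: evaluating the key identity with two independent non-real parameters $c, d$ shows that any mixed pointwise assignment would impose $\operatorname{Im}(\overline c d)\operatorname{Im}(b) = 0$ for all $b \in \Omega$, which fails on the two-dimensional set $\Omega \setminus \R$. This gives $f(z) \equiv \alpha z$ or $f(z) \equiv \alpha \overline z$, and the same argument produces the psd-type / positive semidefinite version.
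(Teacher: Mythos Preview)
Your overall strategy---reduce to $M_2$ and $M_3$ sign preservers via embedding, invoke Theorem~\ref{main_thm_1}, force $\beta = 1$ using FitzGerald--Horn matrices, then use $3 \times 3$ determinants to control the argument of $f$---parallels the paper's. The gap is in the last step. You assert that ``letting $a$ vary across $I$'' promotes the biconditional $\det A > 0 \Leftrightarrow \det f[A] > 0$ into the identity $\alpha^3 \Re(b\bar c d) = \Re\bigl(f(b)\overline{f(c)}f(d)\bigr)$, but this requires, for each fixed $b,c,d \in \Omega$, some $a \in I$ with $a > \max(|b|,|c|,|d|)$ at which $\det A$ vanishes or changes sign. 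On a thin annulus this fails: take $I = (\eta,1)$ with $\eta$ close to $1$ and $b=c=d$ purely imaginary with $|b|$ close to $1$; then $\det A = a^3 - 3a|b|^2 < 0$ for every $a \in (|b|,1)$, so $A$ is never positive definite and the biconditional yields only $\det f[A] \leq 0$, not the equality $\Re f(b) = \alpha\Re(b)$ you need. Scaling $b,c,d$ to smaller modulus does not help, since at this stage you do not yet know $f(tb)$ in terms of $f(b)$.

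The paper circumvents this by working at the boundary of $I$ rather than seeking an interior crossing. In the psd case (normalized to $R=1$), setting all diagonal entries and all off-diagonal moduli equal to $1$ gives $\det A = -2 + 2\Re(X\overline Y Z) \geq 0$ iff $X\overline Y Z = 1$, which forces $f$ to be \emph{multiplicative} on the unit circle; the pd case uses diagonal $1+\epsilon$ and lets $\epsilon \to 0$. A separate argument, varying the moduli $p,q,r \in (\eta,1]$ and exploiting the range of the function $(p^2+q^2+r^2-1)/(2pqr)$, then shows $f$ is \emph{continuous} on $S^1$, after which the classification of continuous endomorphisms of the circle group finishes the proof. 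Your direct route to $\Re f(z) = \alpha\Re(z)$ is cleaner when the annulus is wide enough, but for the general statement the boundary/limit maneuver together with the continuity argument is what actually closes the gap.
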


Our last main result characterizes sign preservers for matrices in $M_G(\Omega)$ for all connected graphs $G$. Surprisingly, we show that the characterization only depends on whether $G$ is a tree or not (or equivalently, whether $G$ is acyclic or $G$ contains a cycle). While our proof techniques can be adapted to more general domains, for simplicity, we only consider the cases where $\Omega$ is $\R$, $\C$, or $(0,\infty)$.  

\begin{utheorem}\label{TsignG}
Fix an integer $n \geq 3$, and suppose $G$ is a connected simple graph on $n$ vertices. Let $f:\Omega\to \C$, where $\Omega$ is any of $\R$, $\C$, or $(0,\infty)$. Then
\begin{enumerate}
\item If $G$ is a tree, then the following are equivalent:
\begin{enumerate}
\item A Hermitian matrix $A\in M_G(\Omega)$ is positive definite if and only if $f_G[A]$ is positive definite.
    \item $f$ is precisely among the preservers in Theorem~\ref{main_thm_1} with $\alpha>0$ and $\beta = 1$.
\end{enumerate}
\item If $G$ is not a tree, then the following are equivalent: 
\begin{enumerate}
\item A Hermitian matrix $A\in M_G(\Omega)$ is positive definite if and only if $f_G[A]$ is positive definite.

    \item Either $f(z) \equiv \alpha z$ or $f(z) \equiv \alpha \overline{z}$ for some $\alpha > 0$. 
\end{enumerate}
\end{enumerate}
The same holds if ``positive definite'' is replaced by ``positive semidefinite''.
\end{utheorem}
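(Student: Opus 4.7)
My plan is to prove $(b) \Rightarrow (a)$ by direct verification and $(a) \Rightarrow (b)$ through successive reductions.

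For $(b) \Rightarrow (a)$: when $G$ is a tree and $f$ has the form of Theorem~\ref{main_thm_1} with $\beta = 1$, the Leibniz expansion of the determinant of any principal submatrix of $A \in M_G(\Omega)$ only collects permutations decomposing into transpositions along tree-adjacent edges, since no cycle of length $\geq 3$ is supported on a tree. Hence the determinant is a polynomial in the diagonal entries $a_{ii}$ and the squared magnitudes $|a_{ij}|^2$; using $f(a_{ii}) = \alpha a_{ii}$ and $|f(a_{ij})|^2 = \alpha^2|a_{ij}|^2$, homogeneity yields $\det f[A_S] = \alpha^{|S|}\det A_S$ on every principal submatrix $A_S$, preserving positivity signs. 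For the cycle case, $f[A] = \alpha A$ or $\alpha A^T$, and sign-preservation is immediate.

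For $(a) \Rightarrow (b)$, I would first pick any edge $(i,j) \in E$ and embed a $2 \times 2$ test matrix into $M_G(\Omega)$ via diagonal padding with a large constant $M$; for $\Omega = (0,\infty)$, the bridge edges of $G$ receive small positive values absorbed by the padding. The sign of positive definiteness of the padded matrix reduces to that of the $2 \times 2$ block, so Theorem~\ref{main_thm_1} yields $f(x) = \alpha\,\sgn(x)|x|^\beta$ on real inputs and $|f(z)| = \alpha|z|^\beta$ with $f(\overline{z}) = \overline{f(z)}$ on complex inputs, for some $\alpha, \beta > 0$. To force $\beta = 1$: if $G = K_n$, invoke Theorem~\ref{main_thm_2} or~\ref{main_thm_3}; otherwise $G$ contains an induced $P_3$ on vertices $u, v, w$ with $(u,v),(v,w) \in E$ but $(u,w) \notin E$ (any tree on $n \geq 3$ vertices admits one via a degree-$\geq 2$ vertex). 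Embedding a $3 \times 3$ test matrix on $\{u,v,w\}$ with diagonal $a,b,c$ and edge entries $x,y$, the determinant condition $c(ab - x^2) > ay^2$ must be equivalent to $c^\beta((ab)^\beta - x^{2\beta}) > a^\beta y^{2\beta}$; specializing $a = b = 1$ reduces this to $(1 - t^2)^\beta = 1 - t^{2\beta}$ for all $t \in (0,1)$, and evaluation at $t = 1/\sqrt{2}$ forces $\beta = 1$. This completes part (1).

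For part (2), I would exploit a shortest cycle $v_1, \ldots, v_k$ of $G$, necessarily chordless by minimality and hence inducing $C_k$. Embedding a cyclic Hermitian matrix with diagonal $t$ and complex edge weights $z_1, \ldots, z_k$, the determinant contains the signed cycle term $\pm 2\operatorname{Re}(z_1 \cdots z_k)$ alongside purely magnitude-dependent terms. Writing $f(z) = \alpha g(z) z$ with $|g(z)| = 1$ and $g(\overline{z}) = \overline{g(z)}$, sign-preservation imposes
\[
\operatorname{Re}\bigl(g(z_1)\cdots g(z_k)\, z_1 \cdots z_k\bigr) = \operatorname{Re}(z_1 \cdots z_k) \quad \text{for all } z_1, \ldots, z_k.
\]
Specializing $z_3 = \cdots = z_k = 1$ and then $z_2 = 1$ gives $\operatorname{Re}(g(z_1) z_1) = \operatorname{Re}(z_1)$, which combined with $|g(z_1)z_1| = |z_1|$ forces $f(z) \in \{\alpha z, \alpha \overline{z}\}$ pointwise. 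A final consistency check on pairs of non-real values using $\operatorname{Re}(g(z_1)g(z_2)\,z_1 z_2) = \operatorname{Re}(z_1 z_2)$ rules out any mixture (since $\operatorname{Re}(z_1\overline{z_2}) \ne \operatorname{Re}(z_1 z_2)$ when both imaginary parts are nonzero), yielding $f \equiv \alpha z$ or $f \equiv \alpha \overline{z}$ globally. The main obstacle will be the careful embedding construction when $\Omega = (0, \infty)$, where no zero entries are permitted and cut or non-essential edges must carry small positive values absorbed into the large diagonal padding without disturbing the sign of positive definiteness, together with the final global consistency argument in the complex case, where the a priori pointwise dichotomy $f(z) \in \{\alpha z, \alpha \overline{z}\}$ must be rigidified to a single global choice.
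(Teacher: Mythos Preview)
Your overall strategy is close to the paper's, and several of your sub-arguments are cleaner than the original: the Leibniz-expansion proof of $(b)\Rightarrow(a)$ for trees and the identity $(1-t^2)^\beta = 1 - t^{2\beta}$ forcing $\beta=1$ on an induced $P_3$ are both more direct than the paper's Schur-complement induction and its appeal to the critical-exponent theorem of Guillot--Khare--Rajaratnam.

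However, your embedding step has a genuine circularity. You write that padding a $2\times 2$ (or $3\times 3$, or $C_k$) test matrix with large diagonal $M$ and small $\epsilon$ on the remaining edges reduces the positive-definiteness sign of the padded matrix to that of the block. For $A'$ itself this is fine; but for $f_G[A']$ you would need $f(M)$ to dominate $f(\epsilon)$ and the block entries, and a priori you know nothing about $f$ --- not even that it is increasing, let alone unbounded. Concretely, from ``$A_{2\times 2}$ not pd $\Rightarrow A'$ not pd $\Rightarrow f_G[A']$ not pd'' you cannot conclude ``$f[A_{2\times 2}]$ not pd'' unless you know the failure of positivity in $f_G[A']$ is localized to the $2\times 2$ block, and that is exactly what needs $f(M)\to\infty$. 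The paper breaks this circularity by a bootstrap: first use a Schur-complement padding with \emph{small} $\epsilon$ (Lemma~\ref{Lextension}) to get the one-sided implication that $f_{G'}$ preserves positive definiteness on every induced $G'$; then use positivity preservation on an induced $P_3$ or $K_3$ to deduce $\lim_{x\to\infty}f(x)=\infty$ (Lemma~\ref{Lunbounded}); only then let the padding diagonal $d\to\infty$ to get the reverse implication (Proposition~\ref{Tinduced}). Your proposal needs an analogous bootstrap before the embeddings can do the work you ask of them. The cycle argument has the same issue, and in addition you should make explicit the continuity/limiting step that converts the inequality sign-equivalence into the exact equality $\Re(f(z_1)\cdots f(z_k)) = \Re(z_1\cdots z_k)$; the paper does this via the explicit construction in Lemma~\ref{Lrealpart}.
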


We also append our main results with a natural variant of these sign preservers, addressed in Theorem~\ref{main_thm_1-oldcase} and Remark~\ref{rem:M_n-case}.

Finally, a natural extension of the positivity preserver problem involves studying functions that are monotone with respect to the Loewner ordering. Given two $n \times n$ positive semidefinite matrices, we write $A \geq B$ whenever $A-B$ is positive semidefinite, and $A > B$ whenever $A-B$ is positive definite. Functions satisfying $f[A] \geq f[B]$ whenever $A \geq B$ are two positive semidefinite matrices were previously studied, for instance in \cite{fitzgerald1977fractional, guillot2015complete, hiai2009monotonicity}. In particular, it is well-known that a power function $f(x) = x^\alpha$ is monotone on $M_n([0,\infty))$ if and only if $\alpha \in \N \cup [n-1, \infty)$. In the spirit of Theorems \ref{main_thm_1}, \ref{main_thm_2}, and \ref{main_thm_3}, and as an application, in Section \ref{Smonotone} we characterize functions $f$ such that $f[A] \geq f[B]$ if and only if $A \geq B$, and likewise satisfying $f[A] > f[B]$ if and only if $A > B$.

\medskip

\textbf{Outline of the paper.} The rest of the paper is organized as follows. In Section \ref{Sprelim}, we review classical characterizations of $2\times 2$ positivity preservers and extend them to broader domains. In Section \ref{sec2}, we prove our characterization of sign preservers over $\M_2$ (Theorem \ref{main_thm_1}). Section \ref{Sreal3} contains the proof of our characterization of real sign preservers for matrices of dimension three or higher (Theorem \ref{main_thm_2}), while in Section \ref{Scomplex3} we prove the corresponding result for complex matrices (Theorem \ref{main_thm_3}). We prove Theorem~\ref{TsignG} in Section~\ref{sec:graphs}.  Finally, in Section \ref{Smonotone}, we discuss monotone transformations with respect to the Loewner ordering.

\section{Entrywise positivity preservers}\label{Sprelim}

We begin with a classification of positive (semi)definite preservers on $2\times 2$ matrices.

\subsection{Positivity preservers over $2\times 2$ matrices}

Let $I \subseteq [0,\infty)$ be an interval and $f: I \to [0,\infty)$. We call $f$ \emph{multiplicatively midconvex} if for all $x, y \in I$,  
\[
\sqrt{f(x)f(y)} \geq f(\sqrt{xy}).
\]   
For a real-valued function $f$, we denote the right-hand limit at $x$ by $f^{+}(x)$ whenever it exists.  

The following result has been established for certain special intervals \cite[Chapter 6]{khare2022matrix}. Here, we extend the result to more general domains.

\begin{proposition}\label{2by2preservers}
Let $\Omega \subseteq \C$ be a reflection-symmetric set that is modulus-closed such that $I:=\Omega\cap[0,\infty)$ is an interval. Let $f: \Omega \to \C$. Then the following are equivalent:
\begin{enumerate}
    \item $f[A]$ is positive definite for all $A\in \M_2(\Omega)$ positive definite.
    \item The following conditions hold:
    \begin{enumerate}
        \item $f|_{I\setminus\{0\}}$ is real-valued, positive, increasing, and multiplicatively midconvex;
        \item if $0\in I$ then the right hand limit over reals $f^{+}(0)\geq |f(0)|$; and
        \item for all $z \in \Omega\setminus\{0\}$ such that there exists $\epsilon>0$ with $|z|+\epsilon \in I$, we have
        \begin{align*}
         f(|z|) \geq |f(z)| \quad \mbox{and}\quad    f(\overline{z}) = \overline{f(z)}.
        \end{align*}
    \end{enumerate}
\end{enumerate}
In particular, $f|_{I\setminus\{0\}}$ is continuous except possibly at the upper-end point of $I$ (if it belongs to $I$).
\end{proposition}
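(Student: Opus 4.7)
The proof rests on the explicit criterion that a $2\times 2$ Hermitian matrix $A = \begin{pmatrix} a & b \\ \overline{b} & c \end{pmatrix} \in M_2(\Omega)$ is positive definite iff $a, c \in I\setminus\{0\}$ and $ac > |b|^2$, with the analogous criterion for $f[A]$ additionally requiring $f(\overline{b}) = \overline{f(b)}$ and $f(a), f(c) \in \R_{>0}$. My plan is to translate both implications into these scalar inequality conditions, extract each of the four assertions in (2) via carefully chosen PD test matrices, and close with a short continuity argument converting one-sided limits into the non-strict inequalities of (2).

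\textbf{(2) $\Rightarrow$ (1).} Given a PD $A$ and the hypotheses of (2), assertion (a) gives $f(a), f(c) \in \R_{>0}$. Since $|b|^2 < ac \leq \max(a,c)^2$, we have $|b| < \max(a,c) \in I$, so $|b| + \epsilon \in I$ for $\epsilon = \max(a,c) - |b|$; thus (c) applies to $z = b$, yielding Hermiticity $f(\overline{b}) = \overline{f(b)}$ and $f(|b|) \geq |f(b)|$. Multiplicative midconvexity combined with strict monotonicity then gives
\[
f(a)f(c) \geq f(\sqrt{ac})^2 > f(|b|)^2 \geq |f(b)|^2,
\]
so $f[A]$ is PD. The subcase $b = 0$ (possible only when $0 \in I$) uses (b) and strict monotonicity to conclude $f(a), f(c) > f^+(0) \geq |f(0)|$.

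\textbf{(1) $\Rightarrow$ (2).} Each property of $f$ is extracted from a bespoke PD test matrix. Applying the preservation property to $\begin{pmatrix} |z|+\epsilon & z \\ \overline{z} & |z|+\epsilon \end{pmatrix}$ (valid when $|z|+\epsilon \in I$) yields $f(\overline{z}) = \overline{f(z)}$ together with $f(|z|+\epsilon) > |f(z)|$. Matrices of the form $\begin{pmatrix} a & a \\ a & a+\delta \end{pmatrix}$ (or, at the upper endpoint of $I$, $\begin{pmatrix} a & b \\ b & a \end{pmatrix}$ with $0 < b < a$) extract strict monotonicity and realness/positivity of $f$ on $I \setminus \{0\}$. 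The matrix $\begin{pmatrix} x & (1-\epsilon)\sqrt{xy} \\ (1-\epsilon)\sqrt{xy} & y \end{pmatrix}$ produces $f(x)f(y) > f((1-\epsilon)\sqrt{xy})^2$; letting $\epsilon \to 0^+$ yields $f(x)f(y) \geq f^-(\sqrt{xy})^2$, which upgrades to multiplicative midconvexity once continuity is established. Finally, for (b) when $0 \in I$, the matrix $\begin{pmatrix} a & 0 \\ 0 & a \end{pmatrix}$ forces $f(0) \in \R$ and $f(a) > |f(0)|$; letting $a \to 0^+$ gives $f^+(0) \geq |f(0)|$.

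\textbf{Main obstacle.} The principal difficulty is converting the strict inequalities produced by positive-definite test matrices into the non-strict conclusions of (2)(c) (both $f(|z|) \geq |f(z)|$ and midconvexity), which requires $f|_{I \setminus \{0\}}$ to be continuous in the interior of $I$. Since $f$ is strictly increasing, it has at most countably many jump discontinuities, and each must be excluded by a dual test matrix of the form $\begin{pmatrix} (1+\epsilon)x & \sqrt{xy} \\ \sqrt{xy} & (1+\epsilon)y \end{pmatrix}$, yielding the complementary bound $f^+(x)f^+(y) \geq f(\sqrt{xy})^2$; pairing this with the inequality $f(x)f(y) \geq f^-(\sqrt{xy})^2$ from the previous step, and invoking strict monotonicity at a putative jump point $t_0 \in \intr(I)$, forces continuity there. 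This continuity, valid everywhere in $I$ except possibly at its upper endpoint, simultaneously validates the limiting arguments above and yields the final ``in particular'' claim of the proposition.
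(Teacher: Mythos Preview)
Your overall architecture matches the paper's: test matrices extract each clause of (2), and continuity is the crux that lets you pass from the strict inequalities coming out of PD test matrices to the non-strict conclusions. The test matrices you pick differ only cosmetically from the paper's.

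The one place where you genuinely diverge is the continuity argument, and there your sketch is underspecified. You propose to pair
\[
f^+(x)f^+(y)\ge f(\sqrt{xy})^2 \qquad\text{and}\qquad f(x)f(y)\ge f^-(\sqrt{xy})^2
\]
and ``invoke strict monotonicity'' to rule out a jump at $t_0$. But no symmetric pairing of these two bounds produces a contradiction: setting $x=y=t_0$, or $\sqrt{xy}=t_0$ with $x,y$ continuity points, only recovers the tautologies $f^-(t_0)\le f(t_0)\le f^+(t_0)$. What actually works is a two-step argument: apply the second inequality with $x=t_0$ \emph{fixed} and $y\to t_0^+$ (so $\sqrt{xy}\to t_0^+$ and $f^-(\sqrt{xy})\to f^+(t_0)$) to get $f(t_0)f^+(t_0)\ge f^+(t_0)^2$, hence right-continuity; then apply the first with $\sqrt{xy}=t_0$, $x\to t_0^-$, $y\to t_0^+$ to get $f^-(t_0)f^+(t_0)\ge f(t_0)^2=f^+(t_0)^2$, hence left-continuity. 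Strict monotonicity plays no role. You should either spell this out or adopt the paper's cleaner route: perturb \emph{all three} entries simultaneously via $\begin{pmatrix} x+\epsilon & \sqrt{xy}+\epsilon\\ \sqrt{xy}+\epsilon & y+\epsilon\end{pmatrix}$, which directly yields multiplicative midconvexity of $f^+$; then $g(t)=\log f^+(e^t)$ is midconvex and monotone, hence locally bounded, hence continuous by the standard Jensen-type theorem, and continuity of $f^+$ forces continuity of $f$.
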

\begin{proof}
The result is true vacuously if $I\setminus\{0\}=\emptyset$, so suppose it is not.\\ 
\noindent $(2)\implies (1)$: Pick any positive definite $A:=\begin{pmatrix}a & z \\ \overline{z} & c\end{pmatrix}\in M_2(\Omega)$. Notice $a, c > 0$. If $z = 0$, then $f(a) f(c) > f^+(0) \geq |f(0)|^2 = f(0) \overline{f(0)}$. Thus, $f[A]$ is positive definite.  Now, suppose $z \ne 0$. Since $a, c \in I$ and $I$ is an interval, we have $\sqrt{ac} \in I$. Moreover, $|z| < \sqrt{ac}$ and so there exists $\epsilon > 0$ such that $|z| + \epsilon \in I$. Hence, using (2c), we obtain  $f(z)f(\overline{z})=f(z)\overline{f(z)}=|f(z)|^2 \leq f(|z|)^2 < f(\sqrt{ac})^2 \leq f(a)f(c)$. It follows that $f[A]$ is positive definite. 
\medskip

\noindent $(1)\implies (2)$: To prove (a) and (b), consider the restriction of $f$ to $\Omega\cap\R$. For $0<x<y\in I$, the matrix $\begin{pmatrix}y & x \\ x & x\end{pmatrix}$ is positive definite. Therefore $\begin{pmatrix}f(y) & f(x) \\ f(x) & f(x)\end{pmatrix}$ is positive definite. Thus, $f$ is positive and increasing over $I \setminus \{0\}$. From this, $f$ has at most countably many discontinuities over $I$, and all these are jump discontinuities. Use $\intr(I)$ to denote the interior points of $I$ as a subset of $\R$ and define 
\[
f^+(x):= \lim_{y\to x^+} f(y).
\]
By monotonicity, we have $f^+(x)\geq f(x)$ for all $x\in \intr(I)$. We will show that $f^+$ is continuous on $\intr(I)$. Let $x, y \in \intr(I)$ with $x\neq y$. Then $\sqrt{xy}\in \intr(I)$ and for small $\epsilon>0$,
\begin{align*}
A(\epsilon):=\begin{pmatrix}x+\epsilon & \sqrt{xy} + \epsilon \\ \sqrt{xy} + \epsilon & y+ \epsilon \end{pmatrix}\in M_2(I)
\end{align*}
is positive definite. Therefore, $f[A(\epsilon)]$ is positive definite. Thus $f(x+\epsilon)f(y+ \epsilon) > f(\sqrt{xy} + \epsilon)^2.$ Taking $\epsilon \to 0^+$, we obtain $f^+(x)f^+(y) \geq f^+(\sqrt{xy})^2$ for all $x,y\in \intr(I).$ Thus $f^+(x)$ is multiplicatively midconvex over $\intr(I)$ (along with being positive and nondecreasing). Let $\log \intr(I):=\{\log p: p\in \intr(I)\}$ and define the function $g:\log \intr(I) \to \R$ via $g(x):=\log f^+(e^x)$ for all $x \in \log \intr(I)$. Then $g$ is midconvex and nondecreasing on $\log \intr(I).$ In particular, it is bounded on compact sets in $\log \intr(I)$. Therefore, by \cite{roberts1974convex} or \cite[Theorem~6.2]{khare2022matrix}, $g$ is continuous over $\log \intr(I)$. It follows that $f^+(x)$ is continuous over $\intr(I)$. Thus, $f$ is continuous and multiplicatively midconvex over $\intr(I)$.

Next, suppose $0 \in I$. Considering the matrix $\begin{pmatrix}x & 0 \\ 0 & x\end{pmatrix}$ with $0 < x \in I$, we obtain $f(x)>|f(0)|$. Since $f$ is increasing on $I\setminus \{0\}$, it follows that $f^{+}(0)\geq |f(0)|.$ We now examine the mid-convexity of $f$ at the end points of $I$. Denote by $\lambda,\upsilon$ the lower- and upper-end points of $I$ respectively. We first show that $f$ is right continuous at $\lambda\in I$ if $\lambda>0.$ Note that $f^+(\lambda)$ exists and $f^+(\lambda) \geq f(\lambda) > 0$ since $\lambda > 0$. The matrix 
\begin{align*}
A(\epsilon):=\begin{pmatrix}
\lambda +\epsilon & \lambda + \epsilon/3 \\
\lambda + \epsilon/3 & \lambda
\end{pmatrix} \in M_2(I)
\end{align*}
is positive definite for all small $\epsilon>0.$ Therefore $\det f[A(\epsilon)] = f(\lambda+\epsilon)f(\lambda) - f(\lambda+\epsilon/3)^2 >0.$ Letting ${\epsilon \to 0^+}$ yields $f^+(\lambda) f(\lambda) - {f^+(\lambda)}^2\geq 0.$ Since $f^+(\lambda)>0,$ we obtain $f^+(\lambda)  \leq f(\lambda)$. Thus $f^+(\lambda) = f(\lambda)$. Now, for $x \in I \setminus \{0, \upsilon\}$ and $y \in I \setminus \{0\}$, consider the positive definite matrix $B(x,y,\epsilon):=\begin{pmatrix}x+\epsilon & \sqrt{xy} \\ \sqrt{xy} & y \end{pmatrix}\in M_{2}(I)$ for small $\epsilon >0$. Using the right continuity at $x$, we conclude that $f$ is multiplicatively midconvex on $I \setminus \{0\}$. For the final step, suppose $z\in \Omega\setminus\{0\}$ is such that there exists $\epsilon>0$ with $|z|+\epsilon\in I$. Then 
\begin{align*}
B(\epsilon):=\begin{pmatrix}|z|+\epsilon & z \\ \overline{z} & |z|\end{pmatrix}\in M_2(I)    
\end{align*}
is positive definite. Therefore $f[B(\epsilon)]$ is positive definite, and in particular $f(\overline{z})=\overline{f(z)}.$ Moreover, $f(|z|+\epsilon)f(|z|) > |f(z)|^2$. Using that $f$ is increasing and continuous on $I \setminus \{0\}$, we conclude that $f(|z|) \geq |f(z)|$. This completes the proof. 
\end{proof}

Notice that if $f$ maps positive definite matrices in $M_2((0,\infty))$ into positive semidefinite matrices, then the proof of Proposition \ref{2by2preservers} shows that $f$ must be continuous. We record this fact as a corollary as it will be useful in Section \ref{sec:graphs}.
\begin{corollary}\label{Cpdtopsd}
Let $f: (0,\infty) \to \R$ and suppose $f[A]$ is positive semidefinite for all $A \in M_2((0,\infty))$ positive definite. Then $f$ is continuous on $(0,\infty)$. 
\end{corollary}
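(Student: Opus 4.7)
The plan is to adapt the proof of Proposition~\ref{2by2preservers}, which uses strict (pd-output) inequalities, to the present weaker hypothesis where $f[A]$ is only required to be positive semidefinite. All strict inequalities become non-strict, and the hypothesis no longer directly forces $f > 0$; this is the only substantive change.

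First, I would establish that $f \geq 0$ and $f$ is nondecreasing. For any $x > 0$ the matrix $\bigl(\begin{smallmatrix}x & x/2 \\ x/2 & x\end{smallmatrix}\bigr)$ is positive definite, so its image has nonnegative diagonal, yielding $f \geq 0$. For $0 < y < x$, the matrix $\bigl(\begin{smallmatrix}x & y \\ y & x\end{smallmatrix}\bigr)$ is positive definite (determinant $x^2 - y^2 > 0$), and its image being psd gives $f(x)^2 \geq f(y)^2$; since $f \geq 0$, we obtain $f(x) \geq f(y)$. In particular $f^+(x) := \lim_{z \to x^+} f(z)$ exists and satisfies $0 \leq f^+(x) \leq f(2x) < \infty$.

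Next, I would replicate the midconvexity argument from Proposition~\ref{2by2preservers}. Applying psd-preservation to the positive definite matrix
\[
A(\epsilon) = \begin{pmatrix} x+\epsilon & \sqrt{xy}+\epsilon \\ \sqrt{xy}+\epsilon & y+\epsilon \end{pmatrix}
\]
(determinant $\epsilon(\sqrt{x}-\sqrt{y})^2 > 0$ for distinct $x, y > 0$) and passing to the limit $\epsilon \to 0^+$ yields $f^+(x) f^+(y) \geq f^+(\sqrt{xy})^2$, i.e., $f^+$ is multiplicatively midconvex. If $f^+(x_0) = 0$ for some $x_0 > 0$, then substituting $y = z^2/x_0$ forces $f^+(z) = 0$ for every $z > 0$, so $f \equiv 0$ by monotonicity and we are done. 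Otherwise $f^+ > 0$ everywhere, and the substitution $g(t) := \log f^+(e^t)$ produces a midconvex nondecreasing function on $\R$; such a function is locally bounded and hence continuous by \cite[Theorem~6.2]{khare2022matrix}, so $f^+$ is continuous on $(0,\infty)$.

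Finally, I would identify $f$ with $f^+$ pointwise. Using the positive definite matrix $\bigl(\begin{smallmatrix} x+\epsilon & x+\epsilon/3 \\ x+\epsilon/3 & x \end{smallmatrix}\bigr)$ (determinant $\epsilon(3x-\epsilon)/9 > 0$ for $0 < \epsilon < 3x$), psd-preservation gives $f(x+\epsilon) f(x) \geq f(x+\epsilon/3)^2$, which in the limit becomes $f^+(x) f(x) \geq f^+(x)^2$. When $f^+(x) > 0$ this forces $f(x) \geq f^+(x)$, and combined with the monotonicity bound $f(x) \leq f^+(x)$ we obtain $f(x) = f^+(x)$; when $f^+(x) = 0$, monotonicity alone gives $f(x) = 0 = f^+(x)$. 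Hence $f = f^+$ everywhere and inherits its continuity. The principal obstacle I anticipate is the vanishing case for $f^+$, which has no counterpart in the proof of Proposition~\ref{2by2preservers} because the stronger pd-output hypothesis there forces $f > 0$ from the outset.
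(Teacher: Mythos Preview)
Your proof is correct and follows precisely the route the paper indicates: adapt the midconvexity-and-monotonicity argument from Proposition~\ref{2by2preservers} to the weaker (psd-output) hypothesis, handling the extra possibility that $f^+$ vanishes somewhere. The paper's own proof is simply the one-line observation that the argument of Proposition~\ref{2by2preservers} goes through, so your write-up is effectively a careful unpacking of that remark, including the vanishing case you correctly flagged as the only new wrinkle.
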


The next result is analogous to Proposition \ref{2by2preservers}, but for transformations that preserve positive semidefiniteness.

\begin{proposition}\label{2by2preservers:psd}
Let $\Omega \subseteq \C$ be a reflection-symmetric set that is modulus-closed such that $I:=\Omega\cap[0,\infty)$ is an interval. Let $f: \Omega \to \C$. Then the following are equivalent:
\begin{enumerate}
    \item $f[A]$ is positive semidefinite for all $A\in M_2(I)$ positive semidefinite.
    \item The following conditions hold:
    \begin{itemize}
        \item $f|_{I}$ is real-valued, nonnegative, nondecreasing, and multiplicatively midconvex; and
        \item for all $z\in \Omega$, we have $f(|z|) \geq |f(z)|$ and $f(\overline{z}) = \overline{f(z)}$.
    \end{itemize}
\end{enumerate}
Moreover, $f|_{I\setminus\{0\}}$ is either identically zero or never zero except possibly at the upper-end point of $I$ (if it belongs to $I$). In particular, $f|_{I\setminus\{0\}}$ is continuous except possibly at the upper-end point of $I$ (if it belongs to $I$). 
\end{proposition}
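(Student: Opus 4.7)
The plan is to parallel the proof of Proposition~\ref{2by2preservers}, replacing positive definiteness with positive semidefiniteness throughout, and adding a dichotomy step for the case where $f$ vanishes. For $(2)\Rightarrow(1)$, given positive semidefinite $A = \begin{pmatrix} a & z \\ \overline{z} & c \end{pmatrix} \in M_2(\Omega)$, we have $a,c \in I$ and $\sqrt{ac} \geq |z|$; the identity $f(\overline{z}) = \overline{f(z)}$ makes $f[A]$ Hermitian, its diagonal is nonnegative by hypothesis, and chaining multiplicative midconvexity, monotonicity, and $f(|z|) \geq |f(z)|$ gives $f(a)f(c) \geq f(\sqrt{ac})^2 \geq f(|z|)^2 \geq |f(z)|^2$, so $f[A]$ is positive semidefinite.

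For $(1) \Rightarrow (2)$, I would extract each required condition by applying $f$ to a specific rank-degenerate positive semidefinite matrix. Monotonicity and nonnegativity of $f|_I$ follow from $\begin{pmatrix} y & x \\ x & x \end{pmatrix}$ for $0 \leq x \leq y$ in $I$; multiplicative midconvexity follows from the rank-one matrix $\begin{pmatrix} x & \sqrt{xy} \\ \sqrt{xy} & y \end{pmatrix}$; and the complex conditions $f(\overline{z}) = \overline{f(z)}$ and $f(|z|) \geq |f(z)|$ follow from the rank-one psd matrix $\begin{pmatrix} |z| & z \\ \overline{z} & |z| \end{pmatrix}$. These extractions proceed analogously to Proposition~\ref{2by2preservers}, with the closed inequality $\det f[A] \geq 0$ replacing strict positivity.

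For the dichotomy \emph{identically zero or never zero} on $I \setminus \{0\}$ (understood modulo the upper endpoint, paralleling the continuity exception below): suppose $f(x_0) = 0$ for some $x_0 > 0$ in $I$. Monotonicity immediately yields $f \equiv 0$ on $I \cap (0, x_0]$. For any $y > x_0$ in $I$, the rank-one positive semidefinite matrix $\begin{pmatrix} x_0 & \sqrt{x_0 y} \\ \sqrt{x_0 y} & y \end{pmatrix}$ has image with top-left entry $0$; positive semidefiniteness then forces its off-diagonal $f(\sqrt{x_0 y})$ to vanish. Iterating this geometric-mean propagation, with each newly obtained zero at $\sqrt{x_0 y}$ serving as the next base point, extends the vanishing throughout the interior of $I \cap (0, \sup I)$. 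The continuity conclusion then follows: trivially in the identically-zero case, and in the never-zero case by applying the log-convexity argument of Proposition~\ref{2by2preservers} to $\log f \circ \exp$ on the log of the open interior. The main obstacle I anticipate is this iterative propagation, which must cover the full interior of $I \setminus \{0\}$ using only rank-degenerate psd matrices, with behavior at $\sup I$ (if attained) requiring the same endpoint exception as the continuity statement.
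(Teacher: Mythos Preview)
Your proposal is correct and follows essentially the same route as the paper: the same family of rank-one test matrices to extract the conditions in (2), and the same geometric-mean propagation for the dichotomy (the paper organizes it as a finite geometric sequence $x, xw, \ldots, xw^n = y$ with $w=(y/x)^{1/n}$ chosen so that $xw^{n+1}<\sup I$, rather than an open-ended iteration, but the underlying step is identical). Your caveat about the upper endpoint in the dichotomy is warranted and is shared by the paper's own argument, which likewise only reaches targets $y$ strictly below $\sup I$.
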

\begin{proof}
The result is true vacuously if $I=\emptyset$, so suppose it is not. One can see that $(1)$ implies $(2)$ using the matrices
\[
\begin{pmatrix}
x & 0 \\
0 & x
\end{pmatrix},\quad 
\begin{pmatrix}
y & x \\
x & y
\end{pmatrix},\quad
\begin{pmatrix}
x & \sqrt{xy} \\
\sqrt{xy} & y
\end{pmatrix}, \quad\mbox{and}\quad
\begin{pmatrix}
|z| & z \\
\overline{z} & |z| 
\end{pmatrix},
\]
where $x\leq  y \in I$, $z \in \Omega$. The proof of $(2)$ implies $(1)$ is similar to that of its counterpart in Proposition~\ref{2by2preservers} and we omit the details.

We show the `never zero or always zero' part. Suppose $f(x)=0$ for some positive $x\in I.$ As $f$ is nonnegative and nondecreasing, we have $f\equiv 0$ over $(0,x]\cap I.$ we claim that $f(y)=0$ for $x<y\in I.$ Following the idea in \cite[Theorem~6.7]{khare2022matrix}, choose some large integer $n$ such that $w:=\sqrt[n]{y/x}<\upsilon/y,$ where $\upsilon$ is the upper-end point of $I$ and $y<v$. Consider the following matrices:
\begin{align*}
A_k:=\begin{pmatrix}xw^{k-1} & x w^{k} \\ x w^{k} & xw^{k+1}\end{pmatrix}\quad \mbox{for }k=1,\dots,n.
\end{align*}
By construction each $A_{k}\in M_2(I)\subseteq \M_2(\Omega)$ is positive semidefinite. Since $\det f[A_{k}]\geq 0$, we obtain:
\begin{align*}
    0\leq f(xw^k)\leq \sqrt{f(xw^{k-1})f(xw^{k+1})}\quad \mbox{for $k=1,\dots,n.$}
\end{align*}
Since $f(x)=0,$ using the inequality for $k=1,$ we get $f(xw)=0.$ Following this inductively, we get $f(xw^{k})=0$ for all $k=1,\dots,n,$ and in particular $f(y)=f(xw^{n})=0,$ concluding this proof.

The proof of the continuity of $f|_{I \setminus \{0\}}$ is similar to the proof of the analogous result in Proposition~\ref{2by2preservers}.
\end{proof}

\subsection{Extension of positive definite/positive semidefinite matrices}

We isolate two positive definite extension problems which will be useful later. For a Hermitian square matrix $A$, and a real number $x$, consider the following extension of the matrix $A$: 
\begin{align}\label{eqn_extension_matrix}
\E(A;x)= \begin{pmatrix}
A & \mathbf{v}^* \\
\mathbf{v} & x
\end{pmatrix},
\end{align}
where $\mathbf{v}$ denotes the last row of $A$ and $\mathbf{v}^*=\overline{\mathbf{v}}^T$. Moreover, if $x$ is equal to the bottom right entry of $A$, we write $\E(A)=\E(A;x)$.

\begin{lemma}\label{LPDextension}
Fix an integer $n\geq 2$ and a positive interval $I\subseteq \R$ such that $\sup I \not\in I$. Let $\Omega\subseteq \C$ such that $\Omega \cap (0,\infty)=I$. If $A \in M_n(\Omega)$ is positive definite, then for any $m>n$, there exists $A' \in M_m(\Omega)$ positive definite such that its $n \times n$ leading principal submatrix is $A$.
\end{lemma}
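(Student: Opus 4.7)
The plan is to build up $A'$ from $A$ one row and column at a time using the extension $\E(\cdot\,;\,x)$ defined in \eqref{eqn_extension_matrix}, choosing the $x$'s to form a strictly increasing sequence inside $I$.

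First I would establish the one-step fact: for any Hermitian positive definite $B \in M_k(\Omega)$ with last diagonal entry $b = B_{k,k}$, the $(k+1)\times(k+1)$ matrix $\E(B;x)$ is positive definite if and only if $x > b$. This is a Schur complement computation: with $\mathbf{v}=\mathbf{e}_k^T B$ denoting the last row of $B$, Hermiticity gives $\mathbf{v}^* = B\mathbf{e}_k$, and hence
\[
\mathbf{v}\, B^{-1}\, \mathbf{v}^* \;=\; \mathbf{e}_k^T B B^{-1} B\, \mathbf{e}_k \;=\; \mathbf{e}_k^T B\, \mathbf{e}_k \;=\; b,
\]
so the Schur complement condition reduces to $x - b > 0$.

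Next I would secure the sequence of extension parameters. Since $A$ is positive definite, $s := A_{n,n}$ is a positive real, hence lies in $\Omega\cap(0,\infty) = I$. The hypothesis $\sup I \notin I$ forces $s < \sup I$, and because $I$ is an interval containing $s$, the half-open interval $[s,\sup I)$ is contained in $I$. I can therefore select any strictly increasing sequence $s =: x_n < x_{n+1} < \cdots < x_m$ with every $x_k \in I$. I then iterate: set $A_n := A$ and $A_{k+1} := \E(A_k;\,x_{k+1})$ for $k = n,\dots,m-1$. By induction, each $A_k$ is Hermitian positive definite with last diagonal entry $x_k$ (base case by hypothesis, inductive step from the one-step fact together with $x_{k+1} > x_k$), and its $n\times n$ leading principal submatrix is $A$ by construction; taking $A' := A_m$ finishes the argument.

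The only point to double-check is that every entry of every $A_k$ stays in $\Omega$. This is automatic: the off-diagonal entries inherited from $A$ are already in $\Omega$; the newly appended off-diagonal entries of $\E(A_k;x_{k+1})$ merely duplicate existing entries of $A_k$ (and their conjugates, which by Hermiticity are also among $A_k$'s entries); and the single new diagonal entry $x_{k+1}$ lies in $I \subseteq \Omega$. In particular, reflection-symmetry of $\Omega$ is never needed. I do not anticipate a serious obstacle — the hypothesis $\sup I \notin I$ is used in exactly one place, namely to guarantee enough room strictly above $s$ within $I$ for a finite strictly increasing sequence of arbitrary length.
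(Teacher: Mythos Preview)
Your proof is correct and follows essentially the same approach as the paper: iterate the one-step extension $\E(\cdot;x)$ with a strictly increasing sequence of $x$'s in $I$. The only cosmetic difference is that the paper verifies positive definiteness via the determinant identity $\det \E(A;x) = (x - a_{nn})\det A$ rather than the Schur complement, and is less explicit about why such $x$ exists and why the entries remain in $\Omega$.
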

\begin{proof}
Let $a_{nn}$ denote the $(n,n)$ entry of $A$ and let $x\in \Omega \cap \R$ with $x>a_{nn}$. Observe that 
\[
\det \E(A;x) = (x-a_{nn}) \cdot \det A>0. 
\]
Thus, $\E(A;x)$ is a positive definite matrix in $M_{n+1}(\Omega)$ whose $n \times n$ leading principal submatrix is $A$. Applying the same argument recursively yields the desired matrix $A'$. 
\end{proof}

The next lemma is analogous to Lemma \ref{LPDextension}, but for positive semidefinite matrices.

\begin{lemma}\label{LPSDextension}
Fix an integer $n\geq 2$ and let $\Omega \subseteq \C$. If $A \in M_n(\Omega)$ is positive semidefinite, then for any $m > n$, there exists $A' \in M_m(\Omega)$ positive semidefinite such that its $n \times n$ leading principal submatrix is $A$. 
\end{lemma}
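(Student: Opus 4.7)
The plan is to build $A'$ by repeatedly duplicating the last row and column, which is exactly the operation $\mathcal{E}(A) = \mathcal{E}(A;a_{nn})$ from \eqref{eqn_extension_matrix}. Since all the new entries of $\mathcal{E}(A)$ are entries already present in the last row/column of $A$ (and the repeated diagonal entry $a_{nn}$), we will automatically have $\mathcal{E}(A) \in M_{n+1}(\Omega)$, so there is nothing to check about the ambient set $\Omega$; no separate analog of the condition $\sup I \notin I$ used in Lemma~\ref{LPDextension} is needed.

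The only real content is that $\mathcal{E}(A)$ is positive semidefinite. I would prove this by writing $A = B^* B$ via a Cholesky-type factorization, with $B = [\mathbf{b}_1,\dots,\mathbf{b}_n] \in \C^{n\times n}$ whose columns are vectors in $\C^n$, so that $a_{ij} = \mathbf{b}_i^* \mathbf{b}_j$. Setting $\tilde B := [\mathbf{b}_1,\dots,\mathbf{b}_{n-1},\mathbf{b}_n,\mathbf{b}_n] \in \C^{n \times (n+1)}$, one checks directly that $\tilde B^* \tilde B = \mathcal{E}(A)$, which is therefore a Gram matrix and hence positive semidefinite. Alternatively, one may verify this via the Schur complement: the $(n+1,n+1)$ Schur complement of the top-left $n\times n$ block $A$ in $\mathcal{E}(A)$ is $a_{nn} - \mathbf{v} A^{+} \mathbf{v}^*$, and since $\mathbf{v}^*$ is the last column of $A$ it lies in the range of $A$, so this Schur complement vanishes; together with $A \succeq 0$ this gives $\mathcal{E}(A) \succeq 0$.

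To pass from $m=n+1$ to arbitrary $m>n$, I would iterate: given $A_k \in M_{n+k}(\Omega)$ positive semidefinite with leading $n\times n$ submatrix equal to $A$, set $A_{k+1} := \mathcal{E}(A_k) \in M_{n+k+1}(\Omega)$; by the same argument $A_{k+1}$ is positive semidefinite, and its leading $n\times n$ submatrix is still $A$ because each duplication only touches the last row and column. After $m-n$ steps we obtain the desired $A' \in M_m(\Omega)$.

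There is no substantive obstacle here; the one thing worth flagging, to emphasize the contrast with Lemma~\ref{LPDextension}, is that the positive semidefinite setting allows the repeated-column trick (which would create a zero Schur complement and thus fail in the positive definite case), and this is precisely why no additional hypothesis on $\Omega$ beyond containing all entries of $A$ is required.
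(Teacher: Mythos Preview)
Your proposal is correct and uses exactly the same construction as the paper: iterate $\mathcal{E}(A)$ to duplicate the last row and column, noting that all new entries already lie in $\Omega$. The only difference is in the verification that $\mathcal{E}(A)\succeq 0$: you use a Gram-matrix factorization (or a Schur complement), whereas the paper checks all principal minors directly via Sylvester's criterion by a three-case analysis on whether the index set contains $n$, $n+1$, or both; both arguments are standard and your Gram-matrix version is arguably cleaner.
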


\begin{proof}
By induction, it suffices to prove the case $m=n+1$. Let $B=\E(A)$; then $B \in M_{n+1}(\Omega)$ and its $n \times n$ leading principal submatrix is $A$. We claim that $B$ is positive semidefinite. Observe that the last two rows of $B$ are exactly the same. By Sylvester's criterion, a Hermitian matrix is positive semidefinite if and only if all principal minors are nonnegative. Let $I$ be a nonempty subset of $\{1,2,\ldots, n+1\}$; we need to show the principal minor $\det B_{I,I}\geq 0$. 
 We consider the following:
 \begin{enumerate}
     \item If $n+1\notin I$, then we know $B_{I,I}=A_{I,I}$ and thus $\det B_{I,I}\geq 0$. 
     \item If $n+1\in I$ and $n \notin I$, then we know $B_{I,I}=A_{I',I'}$ with $I'=(I \setminus \{n+1\}) \cup \{n\}$ and thus $\det B_{I,I}\geq 0$.
     \item If $n,n+1\in I$, then $\det B_{I,I}=0$ since the last two rows of $B_{I,I}$ are exactly the same.
 \end{enumerate}
This completes the proof.
\end{proof}

\section{The $2\times 2$ sign preservers}\label{sec2}

We prove Theorem \ref{main_thm_1}, beginning with the positive definite case. 

\begin{proof}[Proof of Theorem~\ref{main_thm_1} (the positive definite case)] $(\Longleftarrow)$ Let $A = \begin{pmatrix}a & z \\ \overline{z} & c\end{pmatrix} \in M_2(\Omega),$ with $a,c\in \R$.  Suppose $A$ is positive definite, i.e., $a > 0$, and $ac > |z|^2=z\overline{z}$. Then $\alpha a^{\beta}> 0,$ and so $f(a)f(c) = \alpha^2 a^{\beta}c^{\beta} > \alpha^2|z|^{2\beta} = |f(z)|^2 = f(z) f(\overline{z})$. This proves $f[A]$ is positive definite. On the other hand, suppose $A$ is not positive definite. If $a\leq 0$ or $c\leq 0$, then $f(a)\leq 0$ or $f(c)\leq 0$. If $a,c \in (0, \infty)$  and $ac \leq |z|^2=z\overline{z}$, then 
$f(a) f(c) = \alpha^2 a^\beta c^{\beta} \leq \alpha^2|z|^{2\beta}= |f(z)|^2 = f(z) f(\overline{z})$. In both cases, the matrix $f[A]$ is not positive definite.

\noindent $(\Longrightarrow)$ We begin by examining the properties of $f$ over $\R$. By Proposition~\ref{2by2preservers}, $f$ is positive, increasing, and multiplicatively midconvex over $I\setminus\{0\}.$ On the other hand, the matrix $A(x,y):=\begin{pmatrix}x & \sqrt{xy} \\ \sqrt{xy} & y\end{pmatrix}\in \M_2(\Omega)$ is not positive definite for all positive $x,y\in I.$ Therefore $f[A(x,y)]$ is not positive definite. Since $f(x),f(y)>0,$ we must have $f(x)f(y)\leq f(\sqrt{xy})^2$ for all $x,y\in I\setminus\{0\}.$ Thus 
\begin{align}\label{midconv-iden}
f(\sqrt{xy})^2=f(x)f(y)
\end{align}
for all $x,y\in I\setminus\{0\}$. Proposition~\ref{2by2preservers} also gives that $f$ is continuous over $I':=\intr(I)$, the interior of $I$. Equation~\eqref{midconv-iden} yields that for $\phi(x):=\log f(e^x)$ for all $x\in \log I':=\{\log x: x\in I'\}$, we have 
\[
\frac{\phi(x)+\phi(y)}{2}=\phi\left(\frac{x+y}{2}\right)
\] 
for all $x,y\in \log I'$. Moreover, $\phi$ is continuous over $I'$. Therefore $\phi$ is affine (see \cite[Chapter VII]{roberts1974convex}). Hence $f$ is of the required form over $I'.$ To further show that $f$ has the required form over all points in $I,$ we show that $(i)$ $f(0)=0$ if $0\in I,$ and $(ii)$ $f$ is left continuous at the upper-end point of $I$ provided it belongs to $I.$

For $(i)$: From Proposition~\ref{2by2preservers}, we have that the right hand limit $f^+(0)\geq |f(0)|$. Moreover, since $f$ is of the required form over $I'$, we have $f^+(0) = 0$, showing $f(0)=0$.


For $(ii)$: Suppose $\upsilon \in I$ is the upper-end point. Note that $B(\epsilon):=\begin{pmatrix} \upsilon & \upsilon-\epsilon \\ \upsilon-\epsilon & \upsilon-3\epsilon \end{pmatrix} \in M_2(I)$ is not positive definite for small $\epsilon>0$. As  $f(\upsilon)>0$, for all small $\epsilon>0$ we must have $\det f[B(\epsilon)] = f(\upsilon -3\epsilon)f(\upsilon) - f(\upsilon-\epsilon)^2 \leq 0.$ Since $\lim_{\epsilon \to 0^+}\det f[B(\epsilon)] \leq 0,$ we get the left hand limit $f^{-}(\upsilon)\geq f(\upsilon),$ and monotonicity implies the left continuity at $\upsilon$. This proves $f$ has the desired form on $I$.

We now examine the properties of $f$ over $\C\setminus I$. Proposition~\ref{2by2preservers} implies that $f(\overline{z})=\overline{f(z)}$ and $f(|z|) \geq |f(z)|$ for all $z\in \Omega$, since $\Omega$ is pd-type. In particular, $f(\Omega \cap \R) \subseteq \R$. 

In what follows (and later), we use the following test matrices for $a,z,w,u\in \Omega$ and $\epsilon\geq 0$:
\begin{align}\label{test-matrices-1}
A(z,w,\pm \epsilon):=\begin{pmatrix}|z| \pm \epsilon & z \\ w & |z|\end{pmatrix},\quad B(a,\epsilon):=\begin{pmatrix} a & |a| \\ |a| & |a| + \epsilon\end{pmatrix},\quad C(z,w,u,\epsilon):=
\begin{pmatrix}
|z|+\epsilon & z\\
w & u
\end{pmatrix}. 
\end{align}

We already have that $\alpha|z|^{\beta}=f(|z|) \geq |f(z)|$ for all $z\in \Omega$. We show that the reversed inequality also holds. Note that $A(z,\overline{z},0) \in M_2(\Omega)$, and is not positive definite. Therefore, $f[A(z,\overline{z},0)]$ is also not positive definite, and $\det f[A(z,\overline{z},0)]\leq 0$. This gives $f(|z|)\leq |f(z)|$. Hence, $\alpha|z|^{\beta}=f(|z|) = |f(z)|$ for all $z \in \Omega\setminus I$. Now, let $a \in \Omega \cap (-\infty, 0)$. Observe that $f(a) \ne 0$ since $|f(a)| = \alpha |a|^\beta$. Suppose $f(a) > 0$. Then the matrix $B(a,\epsilon)\in \M_2(\Omega)$ is not positive definite, but $f[B(a,\epsilon)]$ is, a contradiction. Therefore we must have $f(a) < 0$ and so $f(a) = -\alpha |a|^\beta = \alpha \sgn(a) |a|^\beta$, as claimed.
\end{proof}

Most of the next proof for the positive semidefinite case is similar to the previous positive definite case, so we only provide those arguments that are different.

\begin{proof}[Proof of Theorem~\ref{main_thm_1} (the positive semidefinite case)]$(\Longleftarrow)$ The proof is similar to the positive definite case.

\noindent $(\Longrightarrow)$ We begin by examining the properties of $f$ over $I$. Clearly $f\not\equiv 0$ over $I\setminus\{0\}$, thus by Proposition~\ref{2by2preservers:psd} $f(x)>0$ for all $x\in I\setminus\{0\}$. Moreover, by Proposition~\ref{2by2preservers:psd}, $f$ is nonnegative, nondecreasing, and multiplicatively midconvex over $I\setminus\{0\}$. Furthermore, $f|_{I\setminus\{0\}}$ is continuous over $I\setminus\{0\}$ (except possibly at its upper-end point, if it is in $I\setminus\{0\}$).

The matrix $D(x,y,\epsilon):=\begin{pmatrix}x & \sqrt{xy}+\epsilon \\ \sqrt{xy}+\epsilon & y\end{pmatrix}\in M_2(I)$ is not positive semidefinite for all $x,y\in I\setminus\{0\}$, and small $\epsilon>0.$ Therefore $f[D(x,y,\epsilon)]$ is not positive semidefinite. Since $f(x),f(y)>0$, using the continuity of $f|_{I \setminus \{0\}}$, we must have $f(x)f(y)\leq f(\sqrt{xy})^2$. Thus 
$
f(\sqrt{xy})^2=f(x)f(y)
$
for all $x,y\in I\setminus\{0\}$. Proceeding as in the proof of the positive definite case of Theorem~\ref{main_thm_1}, we obtain that $f$ is of the required form over interior points in $I$ (as a subset of $\R$). The same argument as in the proof of the positive definite case also shows $f$ has the required form at the upper-end point of $I$ (provided it belongs to $I$). Next we show that $f(0)=0$ if $0\in I.$ Applying $f$ to the zero matrix ${\bf 0}_{2 \times 2}$ shows that $f(0)\geq 0.$ If $f(0)>0,$ then consider $A:=\begin{pmatrix} 0 & a \\ a & 0 \end{pmatrix}\in M_{2}(I)$ for small $a>0$ such that $\alpha a^{\beta}\leq f(0).$ Then $A$ is not positive semidefinite but $f[A]$ is, a contradiction. Thus $f(0)=0.$ This proves $f$ has the claimed properties over $I$.

We now examine the properties of $f$ over $\Omega \setminus I$. Proposition~\ref{2by2preservers:psd} implies that $f(\overline{z})=\overline{f(z)}$ and $|f(z)|\leq f(|z|)=\alpha |z|^{\beta}$ for all $z\in \Omega\setminus I$. To show the remaining properties, we employ the test matrices in equation~\eqref{test-matrices-1}. We already have $|f(z)|\leq f(|z|)=\alpha |z|^{\beta}$ for all $z\in \Omega\setminus I$. To show the other inequality, consider $A(z,\overline{z},-\epsilon)\in \M_2(\Omega)$ which is not positive semidefinite for all small $\epsilon>0$. Therefore we must have $\det f[A(z,\overline{z},-\epsilon)]<0$, which using continuity of $f|_{I\setminus\{0\}}$, implies $f(|z|)\leq |f(z)|$. Therefore $|f(z)|=\alpha|z|^{\beta}$.

For $a\in \Omega\cap (-\infty,0)$, using $B(a,0)$ we show that $f(a)=-\alpha|a|^{\beta}$, as required.
\end{proof}

Theorem~\ref{main_thm_1} classifies functions that preserve both the positive (semi)definite and non-positive (semi)\-definite matrices \emph{within} the Hermitian part of $M_2(\Omega)$. A natural variant of this result considers functions that still preserve the positive (semi)definite matrices, but now allow preservation of the larger class of \textit{all} non-positive (semi)definite matrices in $M_2(\Omega)$, i.e. including those that are not Hermitian. In particular, we now allow a non-positive definite Hermitian matrix to map to a non-Hermitian one. While it is not immediately clear why the preservers in the former setting should contain those of the latter, our next result shows that this is indeed the case. More precisely, the preservers in the latter setting still belong to the same class described in Theorem~\ref{main_thm_1}, but they now satisfy additional injectivity conditions, resulting in a strict subclass of the previously identified preservers.

\begin{theorem}\label{main_thm_1-oldcase}
Under the premise of Theorem~\ref{main_thm_1}, the following are equivalent:
\begin{enumerate}
    \item $f[A]$ is positive definite if and only if $A\in M_2(\Omega)$ is positive definite.
    \item There exist real $\alpha,\beta>0$ such that $f$ is among the preservers in Theorem~\ref{main_thm_1}, and such that $f$ is injective and satisfies $f(\Omega\setminus \R)\subseteq \C\setminus \R$.
 \end{enumerate}
The result holds verbatim with ``pd-type'' replaced by ``psd-type'' and ``definite'' by ``semidefinite''.
\end{theorem}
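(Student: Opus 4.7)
The plan is to derive the result from Theorem~\ref{main_thm_1} by analyzing the extra flexibility provided by non-Hermitian matrices. First note that the restriction of condition (1) of Theorem~\ref{main_thm_1-oldcase} to Hermitian matrices is exactly condition (1) of Theorem~\ref{main_thm_1}. Consequently $f$ is already forced to have the form given there, and what remains is to establish (respectively, exploit) the additional injectivity and non-real mapping properties.

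For $(1)\Rightarrow(2)$, after extracting the form of $f$ from Theorem~\ref{main_thm_1}, I would prove injectivity by contradiction. Suppose $f(z_1)=f(z_2)$ for distinct $z_1,z_2\in\Omega$, and consider
\[
A=\begin{pmatrix}a & z_1\\ \overline{z_2} & a\end{pmatrix}\in M_2(\Omega)
\]
with $a\in I$ chosen so that $a>|z_1|$. Since $z_1\neq z_2$, $A$ is non-Hermitian, hence not positive definite. On the other hand, $f(\overline{z_2})=\overline{f(z_2)}=\overline{f(z_1)}$ via the conjugation property from Theorem~\ref{main_thm_1}, so $f[A]$ is Hermitian; and $f(a)=\alpha a^\beta>\alpha|z_1|^\beta=|f(z_1)|$ forces $f[A]$ to be positive definite, contradicting (1). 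The pd-type hypothesis supplies such an $a$ whenever $z_1\notin I$; the only subcase requiring separate attention is $z_1=\sup I\in I$, which I would dispatch directly using the explicit form $f(x)=\alpha\sgn(x)|x|^\beta$ on $\Omega\cap\R$ together with the strict inequality $|z_2|<\sup I$ guaranteed by pd-type for $z_2\in\Omega\setminus I$. Once injectivity is in hand, the inclusion $f(\Omega\setminus\R)\subseteq\C\setminus\R$ is immediate: if $z\in\Omega\setminus\R$ had $f(z)\in\R$, then $f(z)=\overline{f(z)}=f(\overline{z})$ with $\overline{z}\neq z$, violating injectivity.

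For $(2)\Rightarrow(1)$, the implication $A$ positive definite $\Rightarrow f[A]$ positive definite is immediate from Theorem~\ref{main_thm_1}. For the converse, let $A=\begin{pmatrix}a & z\\ w & c\end{pmatrix}\in M_2(\Omega)$ with $f[A]$ positive definite. Then $f[A]$ is Hermitian, so $f(a),f(c)\in\R$ and $f(w)=\overline{f(z)}=f(\overline{z})$ via the conjugation property. The inclusion $f(\Omega\setminus\R)\subseteq\C\setminus\R$ (contrapositive) forces $a,c\in\R$, while injectivity applied to $f(w)=f(\overline{z})$ yields $w=\overline{z}$. Thus $A$ is Hermitian, and Theorem~\ref{main_thm_1} now delivers $A$ positive definite.

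The positive semidefinite case proceeds by the same strategy, substituting Proposition~\ref{2by2preservers:psd} and the psd halves of Theorem~\ref{main_thm_1} for their positive definite analogs. The main obstacle is not conceptual but one of bookkeeping: one must verify that the pd/psd-type conditions leave enough room in $I$ to realize the witness matrix in every configuration of $z_1,z_2$, including the degenerate boundary situations where $0\in I$ or $\sup I\in I$; these are handled by direct case analysis using the explicit form of $f$ on $\Omega\cap\R$.
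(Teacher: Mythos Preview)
Your proposal is correct and follows the same underlying idea as the paper --- reduce to Theorem~\ref{main_thm_1} for the Hermitian part, then use non-Hermitian $2\times 2$ test matrices to isolate the extra injectivity and non-real conditions --- but the logical organization is different, and in one respect tidier.

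The paper proceeds in the opposite order for $(1)\Rightarrow(2)$: it first proves $f(\Omega\setminus\R)\subseteq\C\setminus\R$ via a case split on the sign of $f(z)\in\R$ (using the matrices $C(z,\overline{z},z,\epsilon)$ and $A(z,z,\epsilon)$ from \eqref{test-matrices-1}), and only then establishes injectivity on $\Omega\setminus\R$ with the matrix $A(z,\overline{w},\epsilon)$, finally patching this together with injectivity on $\Omega\cap\R$ (from the explicit form) and the now-known disjointness of $f(\Omega\cap\R)\subseteq\R$ and $f(\Omega\setminus\R)\subseteq\C\setminus\R$. Your route --- prove global injectivity in one stroke, then read off the non-real inclusion from $f(\overline{z})=\overline{f(z)}$ --- avoids the sign dichotomy and the image-disjointness bookkeeping. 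The price is the $z_1=\sup I\in I$ boundary case that you flag; the paper's matrices sidestep this by using asymmetric diagonal entries $|z|+\epsilon$ and $|z|$ and by restricting the injectivity argument to $\Omega\setminus\R$ where the pd-type hypothesis applies directly. Your boundary fix (swap $z_1,z_2$ after observing $|z_2|<\sup I$) works, but deserves to be written out rather than left implicit. For $(2)\Rightarrow(1)$ your appeal back to Theorem~\ref{main_thm_1} once $A$ is shown Hermitian is cleaner than the paper's direct case-by-case verification.
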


\begin{proof}(The positive definite case) $(\Longleftarrow)$ Let $A = \begin{pmatrix}a & z \\ w & c\end{pmatrix}\in M_2(\Omega).$  Suppose $A$ is positive definite, i.e., $w=\overline{z}$, $a > 0$, and $ac > |z|^2=z\overline{z}$. Then $\overline{f(z)} = f(\overline{z})$, $\alpha a^{\beta}> 0,$ and so $f(a)f(c) = \alpha^2 a^{\beta}c^{\beta} > \alpha^2|z|^{2\beta} = |f(z)|^2 = f(z) f(\overline{z})$. This proves $f[A]$ is positive definite. On the other hand, suppose $A$ is not positive definite. If $w \ne \overline{z}$, then $f(w) \ne \overline{f(z)}$ since $\overline{f(z)} = f(\overline{z})$ and $f$ is injective. Thus, $f[A]$ is not positive definite. Now assume $w = \overline{z}$. If $a \not\in (0,\infty)$ or $c \not\in (0,\infty)$, then $f(a)\not\in (0,\infty)$ or $f(c) \not \in (0,\infty)$. If $a,c \in (0, \infty)$ and $ac \leq |z|^2=z\overline{z}$, then 
$f(a) f(c) = \alpha^2 a^\beta c^{\beta} \leq \alpha^2|z|^{2\beta}= |f(z)|^2 = f(z) f(\overline{z})$. In all cases, the matrix $f[A]$ is not positive definite.

$(\Longrightarrow)$ This is the proof of Theorem~\ref{main_thm_1} (positive definite case) appended by the following two paragraphs using the test matrices defined in equation~\eqref{test-matrices-1}. Consider $z\in \Omega\setminus\R$ and assume $f(z)\in \R$. Then $f(\overline{z})=\overline{f(z)} = f(z)\in \R$. If $f(z)>0$ then $f(z)=\alpha|z|^{\beta}$. Then $C(z,\overline{z},z,\epsilon)\in \M_2(\Omega)$ is not positive definite, but $f[C(z,\overline{z},z,\epsilon)]$ is, a contradiction.
On the other hand, if $f(z)< 0$ then $f(z)=-\alpha|z|^{\beta}$, in which case, consider $A(z,z,\epsilon)\in \M_2(\Omega)$. The matrix $A(z,z,\epsilon)$ is not positive definite, but $f[A(z,z,\epsilon)]$ is positive definite, a contradiction. Therefore, we must have $f(z)\in \C\setminus\R$.

Finally, we show that $f$ is injective on $\Omega\setminus \R$. Let $z, w \in \Omega \setminus \R$ with $z \ne w$, and assume $f(z) = f(w)$. If $z = \overline{w}$, then $f(z) = f(\overline{w}) = \overline{f(w)} \ne f(w)$ since $f(w) \not\in \R$. If instead $z \ne \overline{w}$, then consider the matrix $A:=A(z,\overline{w},\epsilon)\in \M_2(\Omega)$.
Clearly, $A$ is not positive definite. However 
\[
f[A]=
\begin{pmatrix}
f(|z|+\epsilon) & f(z)\\
f(\overline{w}) & f(|z|)
\end{pmatrix}=
\begin{pmatrix}
f(|z|+\epsilon) & f(z)\\
\overline{f({z})} & f(|z|)
\end{pmatrix}
\]
is positive definite, a contradiction. This proves $f$ is injective on $\Omega \setminus \R$. Since $f$ maps $\Omega \cap \R$ into itself and is injective on that set, it follows that $f$ is injective on $\Omega$.

(The positive semidefinite case) $(\Longleftarrow)$ The proof is similar to the positive definite case.

$(\Longrightarrow)$ This is the proof of Theorem~\ref{main_thm_1} (positive semidefinite case) appended by the following paragraphs using equation~\eqref{test-matrices-1}.

Suppose $z\in \Omega\setminus \R$ is such that $f(z)\in \R$. Then $f(\overline{z}) = \overline{f(z)} = f(z)\in \R$. If $f(z)>0$, then $f(z)=\alpha|z|^\beta$. It follows that the matrix $C:=C(z,\overline{z},z,0)$ is not positive semidefinite, but $f[C]$ is, a contradiction. On the other hand, if $f(z)<0$ then $f(z)=-\alpha|z|^{\beta}$, in which case consider $A:=A(z,z,0)\in \M_2(\Omega)$. The matrix $A$ is not positive semidefinite, but $f[A]$ is, a contradiction. Therefore $f(z)\in \C\setminus\R$.

The proof of the injectivity of $f$ over $\Omega\setminus\R$ is similar to the proof of the positive definite case by considering the matrix $A(z,\overline{w},0)\in \M_2(\Omega)$.
\end{proof}

\begin{remark}[A collection of ``irregular'' $2\times 2$ complex sign preservers and positivity preservers]\label{rem:2by2-complex}

Note that, for the $2\times 2$ preservers in Theorems~\ref{main_thm_1} and \ref{main_thm_1-oldcase}, the argument of $f$ is mostly irrelevant outside the real axis. Thus, they may not be very nice; we provide a family of such examples. Denote by $S^1_+$ the unit circle in the strict upper half plane of $\C$. Suppose $\gamma : S^1_+ \to (0,\pi)$ is injective. Define, for all $z\in S^1\setminus\{\pm 1\}$,
\begin{align*}
\theta(z):=
    \begin{cases}
    \gamma(z) & \mbox{if }\Im(z)>0,\\
    -\gamma(\overline{z}) & \mbox{if }\Im(z)<0.
    \end{cases}
\end{align*}
Fix real numbers $\alpha,\beta>0$, and define $f:\C\to \C$ by
\begin{align*}
f(z):=
\begin{cases}
\alpha|z|^{\beta} \exp({i \theta \left(z/|z|\right)}) & \mbox{if }z\in \C\setminus \R,\\
\alpha\sgn(z)|z|^{\beta} & \mbox{if }z\in \R.
\end{cases}
\end{align*}
Then, it is not difficult to check that $f[A]$ is positive definite if and only if $A\in \M_2(\C)$ is positive definite as it satisfies the conditions in Theorems~\ref{main_thm_1} and \ref{main_thm_1-oldcase}. In particular, since $\gamma$ is injective with values in $(0,\pi)$,
we have $f(z)\in\R$ if and only if $z\in\R$. Moreover, the map
$x\mapsto \alpha \sgn(x)|x|^\beta$ is injective on $\R$. Now suppose
$z,w\in\C\setminus\R$ and $f(z)=f(w)$. Then $|z|=|w|$ and
$e^{i\theta(z/|z|)}=e^{i\theta(w/|w|)}$. By construction of $\theta$,
this forces $z$ and $w$ to lie in the same half-plane, and then the
injectivity of $\gamma$ implies $z=w$. Hence $f(w)=f(z)$ if and only if
$w=z$. Finally, to obtain an ``irregular'' preserver over an $\Omega$ in Theorems~\ref{main_thm_1} and \ref{main_thm_1-oldcase} (and Propositions~\ref{2by2preservers} and \ref{2by2preservers:psd}), consider $f|_{\Omega}$.
\end{remark}

The irregular behavior of the functions $f$ described in Remark~\ref{rem:2by2-complex} stems primarily from their irregularity on \( S^1 \). Motivated by this observation, we demonstrate that, under the additional assumptions of multiplicativity and Lebesgue measurability, the class of preservers restricted to $S^1$ coincides with the continuous field automorphisms, whose classification can be found in \cite{guillot2014fractional}.

\begin{theorem}
Let \(1 < \rho \leq \infty \), and \( f : D(0,\rho) \to \mathbb{C} \) be a multiplicative function on \( D(0,\rho) \), continuous on \( S^1 \), and Lebesgue measurable on some interval \( I \subseteq (0,\rho) \) that contains $1$. The following are equivalent:
\begin{enumerate}
    \item \( f[A] \) is positive (semi)definite if and only if \(A\in \M_2(D(0,\rho)) \) is positive (semi)definite.
    \item There exists real \( \beta >0 \) and \( \kappa \in \{ \pm 1 \} \) such that
    \[
    f(z) = f(r \exp({i\theta})) = r^{\beta} \exp({i \kappa \theta})
    \]
    for all \( z \in D(0,\rho) \), where \( r = |z| \) and \( \theta = \arg(z) \).
\end{enumerate}
\end{theorem}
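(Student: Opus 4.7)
The easy direction $(2)\Rightarrow(1)$ reduces to verifying that $f(re^{i\theta})=r^\beta e^{i\kappa\theta}$ with $\beta>0$ and $\kappa\in\{\pm 1\}$ satisfies the hypotheses of Theorem~\ref{main_thm_1-oldcase}: on $\R$, $f(x)=\sgn(x)|x|^\beta$ (take $\alpha=1$); everywhere $|f(z)|=|z|^\beta$ and $f(\overline{z})=\overline{f(z)}$; and $f$ is injective on $D(0,\rho)$ with $f(D(0,\rho)\setminus\R)\subseteq\C\setminus\R$, since $z\mapsto z^{\pm 1}$ is a bijection of $S^1$ that preserves (non-)realness.

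For $(1)\Rightarrow(2)$, the plan is to combine Theorem~\ref{main_thm_1-oldcase} with the classical classification of continuous homomorphisms of $S^1$, proceeding in four steps. First, apply Theorem~\ref{main_thm_1-oldcase} to $\Omega=D(0,\rho)$, which is reflection-symmetric, modulus-closed, of pd-type (resp.\ psd-type), and whose intersection with $[0,\infty)$ is the interval $[0,\rho)$. This yields real constants $\alpha,\beta>0$ such that $f(x)=\alpha\,\sgn(x)|x|^\beta$ on $\R\cap D(0,\rho)$, $|f(z)|=\alpha|z|^\beta$, and $f(\overline{z})=\overline{f(z)}$; moreover $f$ is injective on $D(0,\rho)$ with $f(D(0,\rho)\setminus\R)\subseteq\C\setminus\R$. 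Second, multiplicativity gives $f(1)=f(1)^2$, so $f(1)\in\{0,1\}$, and since $f(1)=\alpha>0$ one obtains $\alpha=1$.

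Third, I would isolate the behavior of $f$ on $S^1$. Since $|f(z)|=|z|^\beta=1$ for $z\in S^1$, the restriction $f|_{S^1}$ is a continuous multiplicative map $S^1\to S^1$. Lifting to an additive continuous function $\R\to\R$ via Cauchy's equation (which is forced to be linear and to send $2\pi$ into $2\pi\Z$) shows that this restriction must be of the form $f(e^{i\theta})=e^{in\theta}$ for some $n\in\Z$. Then multiplicativity, together with the decomposition $z=|z|\cdot(z/|z|)$ and $f(|z|)=|z|^\beta$, gives $f(re^{i\theta})=r^\beta e^{in\theta}$ on $D(0,\rho)\setminus\{0\}$, while $f(0)=0$ follows from Theorem~\ref{main_thm_1-oldcase}.

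Finally, we restrict the possible values of $n$. If $n=0$, then $f(i)=1=f(1)$ contradicts injectivity. If $|n|\geq 2$, choose $\theta_0=\pi/n$: then $e^{i\theta_0}\in S^1\setminus\R$ but $f(e^{i\theta_0})=e^{i\pi}=-1\in\R$, contradicting $f(D(0,\rho)\setminus\R)\subseteq\C\setminus\R$. Hence $n\in\{\pm 1\}$, and taking $\kappa=n$ yields the desired form. I do not expect a serious obstacle, as the argument is essentially a direct composition of Theorem~\ref{main_thm_1-oldcase} with classical facts about characters of $S^1$. The Lebesgue measurability hypothesis on $I$ appears redundant once Theorem~\ref{main_thm_1-oldcase} is invoked, but would offer a more self-contained route to $f(r)=r^\beta$ on $(0,\rho)$ through the classical measurable solution of Cauchy's multiplicative equation.
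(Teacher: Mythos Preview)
Your proposal is correct and follows essentially the same approach as the paper, which simply invokes \cite[Lemma~3.1]{guillot2014fractional} together with Theorem~\ref{main_thm_1-oldcase}; you have unpacked the content of that cited lemma (continuous characters of $S^1$ plus multiplicativity) and correctly observed that the Lebesgue measurability hypothesis becomes redundant once Theorem~\ref{main_thm_1-oldcase} already supplies $f(x)=\alpha x^\beta$ on $(0,\rho)$.
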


\begin{proof}
This follows directly from \cite[Lemma~3.1]{guillot2014fractional} and Theorem~\ref{main_thm_1-oldcase}.
\end{proof} 

\section{Real sign preservers for $n\geq 3$}\label{Sreal3}

Before proving Theorem~\ref{main_thm_2}, we first classify the power functions that are sign preservers. Henceforth, we use ${\bf 1}_{n\times n}$ to denote the $n\times n$ matrix with all entries $1$. Similarly, ${\bf 1}_{n}\in \R^n$ denotes the vector with all coordinates equal to $1$.

\begin{lemma}\label{lemma:sign_power_real}
Fix an integer $n\geq 3$, and let $\Omega\subseteq \R$ be such that $I:=\Omega\cap (0,\infty)$ is a nonempty interval.  Suppose $\beta\in (0,\infty)$, and let $f(x) := \sgn(x)|x|^{\beta}$ for all $x\in \Omega$. Assume a Hermitian matrix $A\in M_n(\Omega)$ is positive (semi)definite if and only if $f[A]$ is positive (semi)definite. Then $f(x) = x$, i.e., $\beta = 1$.
\end{lemma}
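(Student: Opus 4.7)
The plan is to combine the FitzGerald--Horn theorem (Theorem~\ref{thm:fitz_horn_fractional}) with an explicit matrix construction. Since $f$ restricted to $I := \Omega \cap (0, \infty)$ equals the power function $x \mapsto x^\beta$, and the sign-preserver hypothesis in particular implies $f$ preserves positive (semi)definiteness on $M_n(I)$, a rescaling reduction to Theorem~\ref{thm:fitz_horn_fractional} forces $\beta \in \N \cup [n-2, \infty)$, hence $\beta \geq 1$. It therefore remains to rule out $\beta > 1$.

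To this end, the plan is to exhibit a Hermitian matrix $A \in M_n(\Omega)$ with $A$ not positive definite and $A^{\circ \beta}$ positive definite, contradicting sign preservation. The workhorse is the $3 \times 3$ test matrix
\begin{align*}
A_0 := \lambda \begin{pmatrix} 1 & a & a \\ a & 1 & c \\ a & c & 1 \end{pmatrix}, \qquad a \in (1/\sqrt{2},\, 1),\ c \in (0,\, 1),
\end{align*}
where $\lambda \in I$ is chosen near $\sup I$ so that $\lambda$, $\lambda a$, and $\lambda c$ all lie in $I$. A direct determinant factorization gives $\det A_0 = \lambda^3(1 - c)(1 + c - 2a^2)$, so $A_0$ is positive definite iff $c \in (2a^2 - 1,\, 1)$; analogously $A_0^{\circ \beta}$ is positive definite iff $c \in ((2a^{2\beta} - 1)^{1/\beta},\, 1)$. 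The key estimate
\begin{align*}
(2 a^{2\beta} - 1)^{1/\beta} < 2 a^2 - 1 \qquad (\beta > 1,\ a \in (1/\sqrt{2}, 1))
\end{align*}
is, after substituting $x = 2a^2 - 1 \in (0,1)$, equivalent to $\bigl(\tfrac{x+1}{2}\bigr)^{\beta} < \tfrac{x^\beta + 1}{2}$, i.e.\ the strict convexity of $t \mapsto t^\beta$ on $(0,\infty)$. Hence any $c$ in the nonempty open interval between the two thresholds yields the required $3 \times 3$ counterexample.

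To extend the counterexample to arbitrary $n > 3$, iterate the augmentation $\E(\,\cdot\,;\mu_k)$ from equation~\eqref{eqn_extension_matrix}, choosing each $\mu_k \in I$ strictly greater than the current bottom-right entry; finitely many such choices exist since $I$ is a nontrivial interval (subdividing the available room above $\lambda$ if $\sup I \in I$). Each step commutes with the entrywise $\beta$-th power: $A_{k+1}^{\circ \beta} = \E(A_k^{\circ \beta}; \mu_k^\beta)$, and the determinant identity $\det \E(B;x) = (x - b_{kk})\det B$ inductively preserves the positivity of $A_k^{\circ \beta}$. Meanwhile, $A_n$ inherits non-positive-definiteness from its leading $3 \times 3$ principal submatrix $A_0$, so $A_n$ is not positive definite while $A_n^{\circ \beta}$ is, forcing $\beta = 1$. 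The positive semidefinite case is handled analogously using Lemma~\ref{LPSDextension} and a test matrix on the boundary of the PSD cone.

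The main technical point is ensuring the admissible region for $(a,c)$ remains nonempty regardless of how narrow $I$ is. Since the two thresholds above are continuous in $a$ and strictly distinct throughout $(1/\sqrt{2}, 1)$ by the convexity inequality, $a$ may be chosen arbitrarily close to $1$, which accommodates any ratio $\inf I / \sup I < 1$ after rescaling.
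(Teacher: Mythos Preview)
Your argument is correct and takes a genuinely different route for the range $\beta>1$. The paper reaches $\beta\in\N\cup[n-2,\infty)$ via FitzGerald--Horn just as you do, but then splits the remaining cases in two: for integer $\beta\ge2$ it builds a rank-$(n-1)$ matrix from Vandermonde vectors whose $\beta$-th Schur power is full rank, and for non-integer $\beta>n-2$ it invokes Jain's theorem~\cite{jain2020hadamard}. Your single $3\times3$ test matrix, together with the strict convexity inequality $\bigl(\tfrac{x+1}{2}\bigr)^\beta<\tfrac{x^\beta+1}{2}$, disposes of every $\beta>1$ at once and avoids the external reference to Jain entirely; the $\E(\,\cdot\,;\mu_k)$ bootstrap to $M_n$ is clean and commutes with the entrywise power exactly as you say. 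This is a more elementary and more uniform treatment than the paper's. (Incidentally, the same matrix with the concavity inequality for $\beta<1$ would also rule out that range directly, making the FitzGerald--Horn step optional.)

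Two small points to tighten. First, in your Step~1 for the positive \emph{definite} case, the FitzGerald--Horn witness $(1+\epsilon ij)_{i,j}$ has rank~$2$ and is only positive semidefinite; you need the perturbation $A\mapsto A+\gamma I_n$ (and continuity of $x\mapsto x^\beta$) to produce a positive definite matrix whose $\beta$-th power is still not positive semidefinite. The paper spells this out; your phrase ``a rescaling reduction'' hides it. Second, your sentence on the positive semidefinite case (``a test matrix on the boundary of the PSD cone'') is misleading: a singular PSD matrix whose $\beta$-th power is PD does \emph{not} contradict PSD sign preservation. What you want is $c$ strictly below $2a^2-1$ so that $\det A_0<0$ (hence $A_0$ is not PSD) while $A_0^{\circ\beta}$ remains PD; then extend either via $\E(\,\cdot\,)$ and Lemma~\ref{LPSDextension}, or simply reuse your $\E(\,\cdot\,;\mu_k)$ chain, since $A_n$ is still not PSD (its leading $3\times3$ block has negative determinant) while $A_n^{\circ\beta}$ is PD, hence PSD.
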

\begin{proof}
Suppose $0<a<b$ such that $(a,b)\subseteq I$ is properly contained. We first address the positive definite case. Firstly, we assume that $\beta \in (0, n-2) \setminus \N$. Consider $A(\epsilon) := \begin{pmatrix}1+\epsilon ij\end{pmatrix}_{i,j=1}^n$. By Theorem \ref{thm:fitz_horn_fractional}, $f[A(\epsilon)]$ is not positive semidefinite for all $\epsilon \in (0, \epsilon_0)$ where $\epsilon_0 > 0$ is small enough. Without loss of generality, assume $\epsilon_0$ is small enough so that $1 < 1+\epsilon_0 n^2 < b/a$ and that $a A(\epsilon)$ has positive entries in $I$. Thus, the matrix $f[a A(\epsilon)]=a^{\beta} f[A(\epsilon)]$ is not positive definite for all $\epsilon \in (0, \epsilon_0)$. For $\gamma > 0$ small enough, the matrix $a A(\epsilon) + \gamma I_n$ also has positive entries in $I$ and, by continuity of $f$, the matrix $f[a A(\epsilon) + \gamma I_n]$ is not positive definite. Thus, for $\epsilon, \gamma > 0$ small enough, the matrix $a A(\epsilon) + \gamma I_n$ is positive definite, but $f[a A(\epsilon) + \gamma I_n]$ is not positive definite. This contradicts the assumption that $f$ is a positive definite sign preserver. We must therefore have $\beta \in \N \cup [n-2, \infty)$. 

Next, assume $\beta = k \in \N \setminus \{1\}$. Let $v = (v_1,\dots,v_n)^T \in (0,\infty)^n$ be a column vector with distinct entries. Using the classical Vandermonde determinant, the vectors $v^{\circ 0} = {\bf 1}_{n}, v, v^{\circ 2}, \dots, v^{\circ (n-1)}$ are linearly independent. Let $\epsilon > 0$ be  small enough such that $1 < 1 + \epsilon (n-2) \left(\max_{1\leq i \leq n} v_i\right)^{2(n-2)} < b/a$. Consider the matrix 
\[
A = a \cdot \left({\bf 1}_{n \times n} + \epsilon \sum_{i=1}^{n-2} v^{\circ i} (v^{\circ i})^T\right). 
\]
By the choice of $\epsilon$, the matrix $A$ has positive entries in $I$. Since $A$ is the sum of $(n-1)$ many rank-one positive semidefinite matrices, $A$ is positive semidefinite and has rank $n-1$. Expanding $f[A]$ yields (up to a positive constant) all terms of the form
\[
\Big{(}v^{\circ i_1}(v^{\circ i_1})^T\Big{)} \circ \Big{(} v^{\circ i_2}(v^{\circ i_2})^T \Big{)} \circ \dots \circ \Big{(} v^{\circ i_k}(v^{\circ i_k})^T \Big{)}
\]
with $0 \leq i_j \leq n-2$ for $j=1, \dots, k$. Each such term is positive semidefinite. Moreover, since
\[
(v^{\circ i}(v^{\circ i})^T)\circ (v^{\circ j}(v^{\circ j})^T ) = v^{\circ (i+j)}(v^{\circ (i+j)})^T,
\]
the sum contains (up to a positive constant) all terms of the form $v^{\circ 0} (v^{\circ 0})^T, v^{\circ 1} (v^{\circ 1})^T, \dots, v^{\circ (n-1)} (v^{\circ (n-1)})^T$. Using again the linear independence of the entrywise powers of $v$, it follows that $f[A]$ is positive definite. Thus, $A$ is singular, but $f[A]$ is positive definite, contradicting the fact that $f$ is a sign preserver. 

Finally, assume $\beta \in (n-2, \infty) \setminus \N$. Consider the matrix $A = a \begin{pmatrix}1 + \epsilon^2 v_i v_j\end{pmatrix}_{i,j=1}^n$ with $v$ as above, and where $\epsilon > 0$ is small enough such that $1 < 1 + \epsilon^2 \max_{1\leq i \leq n} v_i^2 < b/a$. Then $A$ has positive entries in $I$, is singular, and by \cite[Theorem 6(i)]{jain2020hadamard}, the matrix $f[A]$ is positive definite. This again contradicts the assumption that $f$ is a sign preserver. Therefore, $\beta$ has to be equal to $1$. 

We now address the positive semidefinite case. When $\beta \in (0,n-2)\setminus \N$, the same argument as above provides an example of a positive semidefinite matrix $A$ such that $f[A]$ is not positive semidefinite. Next suppose $\beta \in \N \setminus \{1\}$ or $\beta \in (n-2, \infty)\setminus \N$. In each case, the proof of the positive definite case provides a singular positive semidefinite matrix $A$ such that $f[A]$ is positive definite. For $\eta > 0$, consider $A(\eta) = A-\eta I_n$. Notice that $A(\eta)$ is not positive semidefinite for all $\eta > 0$ since $A$ is singular. However, for $\eta > 0$ small enough, by continuity, the matrix $f[A(\eta)]$ is positive definite. Therefore, $\beta$ has to be equal to $1$. This concludes the proof.
\end{proof}

We can now prove Theorem~\ref{main_thm_2}.

\begin{proof}[Proof of Theorem~\ref{main_thm_2}]  $(\Longleftarrow)$ Clearly holds. 

$(\Longrightarrow)$ We first address the positive definite case. Suppose $f$ is a positive definite sign preserver on $M_n(\Omega)$. Let $\upsilon$ denote the upper-end point of $\Omega$. We begin by considering the case where $\upsilon \not\in \Omega$. Consider a positive definite matrix $A \in M_2(\Omega)$. By Lemma \ref{LPDextension}, there exists a positive definite matrix $A' \in M_n(\Omega)$ whose $2\times 2$ leading matrix is $A$. By assumption, $f[A']$ is positive definite. Consequently, $f[A]$ is positive definite. Thus, $f$ preserves positive definiteness on $M_2(\Omega)$. By Proposition~\ref{2by2preservers}, the function $f$ is positive, increasing, and continuous over the positive points in $\Omega$. 

By the same argument as above, $f$ preserves positive definiteness on $M_{n-1}(\Omega)$. Let $A \in M_{n-1}(\Omega)$ be a symmetric matrix such that $A$ is not positive definite. Consider the matrix $\E(A;x) \in M_n(\Omega)$ from Equation (\ref{eqn_extension_matrix}) where $x \in \Omega$ with $x > a_{n-1,n-1}$. Since $A$ is not positive definite, the matrix $\E(A;x)$ is also not positive definite. By assumption $f[\E(A;x)]$ is not positive definite. Suppose for a contradiction that $f[A]$ is positive definite. Then $\det f[\E(A;x)] = (f(x)-f(a_{n-1,n-1})) \det f[A] > 0$ since $f$ is increasing on $I \setminus \{0\}$. This shows that $f[\E(A;x)]$ is positive definite, a contradiction. Thus, we conclude that $f[A]$ is not positive definite. Hence, $f$ is a positive definite sign preserver on $M_{n-1}(\Omega)$. Applying this argument recursively, we obtain that $f$ is a positive definite sign preserver on $M_2(\Omega)$. Therefore, from  Theorem~\ref{main_thm_1}, there exist $\alpha,\beta >0$ such that $f(x) = \alpha\sgn(x){|x|}^{\beta}$ for all $x\in \Omega$. By Lemma~\ref{lemma:sign_power_real}, we have $\beta =1$, i.e., $f(x) = \alpha x$ for all $x\in \Omega$. 

Next, we assume that $\upsilon \in \Omega$. Using the above argument restricted to matrices with entries in $\Omega \setminus \{\upsilon\}$, we conclude that there exists $\alpha > 0$ such that $f(x) = \alpha x$ for all $x \in \Omega \setminus \{\upsilon\}$. Replacing $f$ by $f/\alpha$, we assume without loss of generality that $\alpha = 1$. We will show that $f(\upsilon) = \upsilon$. 

For $1\leq j \leq n$, choose $x_j \in \Omega$ such that $0<x_1<x_2<\ldots<x_n=\upsilon$. Let $A_1=(x_1)$ and inductively define $A_j=\E(A_{j-1};x_j)$ for $2\leq j \leq n$. Then we have $\det A_j= (x_j-x_{j-1})\det A_{j-1}$ for $2\leq j \leq n$ and thus $A_n\in M_n(\Omega)$ is positive definite. It follows that $f[A_n]$ is positive definite and thus $f(\upsilon)=f(x_n)>f(x_{n-1})=x_{n-1}$. Thus, $f(\upsilon)\geq \lim_{x_{n-1} \to \upsilon} x_{n-1}=\upsilon > 0$. 

Let $\epsilon_0$ be small enough such that for all $\epsilon \in (0,\epsilon_0)$ the matrix
$
B_2(\epsilon) = \begin{pmatrix}
\upsilon & \upsilon-\epsilon \\
\upsilon-\epsilon & \upsilon-2\epsilon
\end{pmatrix} \in M_2(\Omega).
$
Note that $B_2(\epsilon)$ is not positive definite since $\det B_2(\epsilon)=-\epsilon^2<0$. Choose $y_2, \dots, y_n \in \Omega$ such that $\upsilon-2\epsilon=y_2<y_3<\ldots<y_n<\upsilon$. We define recursively $B_{j+1}(\epsilon)=\E(B_j(\epsilon); y_{j+1})$ for each $2\leq j \leq n-1$. Note that for each $2\leq j \leq n-1$, we have $\det B_{j+1}(\epsilon)=(y_{j+1}-y_j)\det B_j(\epsilon)$ and $\det f[B_{j+1}(\epsilon)]=(y_{j+1}-y_j)\det f[B_j(\epsilon)]$. If $f[B_2(\epsilon)]$ is positive definite, then $\det f[B_j(\epsilon)]>0$ for each $2\leq j \leq n$. Thus, $f[B_n(\epsilon)]$ is positive definite; however, $B_n(\epsilon)\in M_n(\Omega)$ is not positive definite, contradicting the assumption that $f$ is a positive definite sign preserver. Thus, $f[B_2(\epsilon)]$ is not positive definite. Since $f(\upsilon)>0$, we conclude that $f(\upsilon)(\upsilon-2\epsilon)-(\upsilon-\epsilon)^2\leq 0$. Letting $\epsilon \to 0^+$, we conclude that $f(\upsilon)\leq \upsilon$. This concludes the proof of the positive definite case.

Now, assume $f$ is a positive semidefinite sign preserver. There is no need to consider whether $\upsilon\in \Omega$ because of Lemma~\ref{LPSDextension}. The same argument showing $f(x) = \alpha x$ in the positive definite case applies, with Lemma~\ref{LPDextension} replaced by Lemma~\ref{LPSDextension}, and Proposition~\ref{2by2preservers} replaced by Proposition~\ref{2by2preservers:psd}.
\end{proof}

\section{Complex sign preservers for $n\geq 3$}\label{Scomplex3} 

In this section, we prove Theorem~\ref{main_thm_3}.

\begin{proof}[Proof of Theorem~\ref{main_thm_3}] 
$(\Longleftarrow)$ Clearly holds. 

$(\Longrightarrow)$
We have that $I\subseteq [0,\infty)$ is an interval such that the annulus $\Omega\subseteq \C$ is of psd/pd-type. In both cases, i.e., psd/pd-type cases, as in the proof of Theorem~\ref{main_thm_2}, using an embedding argument with Lemma~\ref{LPSDextension}/Lemma~\ref{LPDextension}, $f$ is a psd/pd sign preserver on $M_2(\Omega)$ and $M_3(\Omega)$. So it suffices to prove the theorem for $n=3$. By Theorem~\ref{main_thm_1} and Theorem~\ref{main_thm_2}, there exists $\mu>0$ such that we have $f(x)=\mu x$ for $x\in I$ and $|f(z)|=\mu|z|$ for all $z\in \Omega$. We re-scale both, $\Omega$ and $f$, by appropriate positive scalars so that without loss of generality, the unit circle $S^1\subset \Omega$, $1 \in I$ is an interior point of $I$, and $f(1)=1$. It suffices to show that either $f\equiv z$ or $f\equiv \overline{z}$.

\textit{(The positive semidefinite case)} 
Consider the following test matrix
\begin{equation}\label{eq:A}
A:=A(a,b,c,x,y,z)= \begin{pmatrix}
a & x & y\\
\overline{x} &b & z\\
\overline{y} &\overline{z} & c
\end{pmatrix}
\end{equation}
with $a,b,c\in I$ and $x,y,z\in \Omega$ such that $ab\geq |x|^2, bc\geq |z|^2, ac \geq |y|^2$, so that all principal minors of $A$ except $\det A$ are nonnegative. Write $x=pX, y=qY, z=rZ$, where  $p,q,r \in I$ and $|X|=|Y|=|Z|=1$, with $p^2 \leq  ab$, $q^2 \leq ac$, $r^2 \leq bc$. Then $A$ is positive semidefinite if and only if
\begin{align*}
\det A
&=a(bc-|z|^2)-x(c\overline{x}-\overline{y}z)+y(\overline{x}\overline{z}-b\overline{y})\\
&=abc-a|z|^2-b|y|^2-c|x|^2+2\Re(x\overline{y}z)\\
&=abc-ar^2-bq^2-cp^2+2pqr\Re(X\overline{Y}Z)\geq 0.
\end{align*}
Note that all principal minors of $f[A]$ except $\det f[A]$ are nonnegative by assumption. Now, $f[A]$ is positive semidefinite if and only if
\begin{align*}
\det f[A]
&=a(bc-f(z)f(\overline{z}))-f(x)(cf(\overline{x})-f(\overline{y})f(z))+f(y)(f(\overline{x})f(\overline{z})-bf(\overline{y}))\\
&=abc-a|z|^2-b|y|^2-c|x|^2+2\Re(f(x)f(\overline{y})f(z))\geq 0.
\end{align*}

First, take $a=b=c=p=q=r=1$. Then $A$ is positive semidefinite if and only if $\det A=-2+2\Re(X\overline{Y}Z)\geq 0$ if and only if $X\overline{Y}Z=1$. And $f[A]$ is positive semidefinite if and only if $\det f[A]=-2+2\Re(f(X)f(\overline{Y})f(Z)) \geq 0$ if and only if $f(X)f(\overline{Y})f(Z)=1$. Thus, we have $X\overline{Y}Z=1$ (that is, $Y=XZ$) if and only if $f(X)f(\overline{Y})f(Z)=1$ (that is, $f(X)f(Z)=f(Y)$). It follows that $f$ is injective and multiplicative on the unit circle, that is, we have $f(XY)=f(X)f(Y)$ whenever $X,Y\in S^1$.

Now pick arbitrary $a=b=c=p\geq q=r\in I$, with $r > 0$. Then $A$ is positive semidefinite if and only if $\det A=-2pq^2+2pq^2\Re(X\overline{Y}Z)\geq 0$ if and only if $X\overline{Y}Z=1$. And $f[A]$ is positive semidefinite if and only if $\det f[A]=-2pq^2+2\Re(f(pX)f(q\overline{Y})f(qZ)) \geq 0$ if and only if $f(pX)f(q\overline{Y})f(qZ)=pq^2$. Setting $Z=1$, we get $X=Y$ if and only if $f(pX)f(q\overline{Y})=pq$. Thus, since $\frac{1}{p}f(pX),\frac{1}{q}f(q\overline{X})\in S^1$, and $f$ is multiplicative on the unit circle, it follows that
\[
\frac{1}{p}f(pX)\frac{1}{q}f(q\overline{X})=1 \quad \iff \quad \frac{1}{p}f(pX) =  \frac{1}{q}f(qX) \qquad \mbox{for all }X\in S^1,~p\geq q\in I.
\]
In particular, $f(rX)=rf(X)$ whenever $X\in S^1$ and $r \in I$.


Next, we take $a=b=c=1$. Fix $0<\eta<1$ such that $[\eta,1] \subset I$, and consider general numbers $p,q,r \in [\eta,1]$. Note that $A$ is positive semidefinite if and only if $\det A=1-p^2-q^2-r^2+2pqr\Re(X\overline{Y}Z)$, and $f[A]$ is positive semidefinite if and only if 
\begin{align*}
\det f[A]
&=1-p^2-q^2-r^2+2\Re(f(x)f(\overline{y})f(z))\\
&=1-p^2-q^2-r^2+2pqr \Re(f(X)f(\overline{Y})f(Z))\\
&=1-p^2-q^2-r^2+2pqr\Re(f(X\overline{Y}Z))\geq 0.
\end{align*}
Setting $X = Y = 1$, we have thus proved that for each $Z$ on the unit circle, and any real numbers $p,q,r \in [\eta,1]$, we have
\begin{equation*}
1-p^2-q^2-r^2+2pqr \Re(Z)\geq 0 \iff 1-p^2-q^2-r^2+2pqr \Re(f(Z))\geq 0.
\end{equation*}
Consider the continuous function
$$
g(p,q,r)=\frac{p^2+q^2+r^2-1}{2pqr},
$$
where $p,q,r \in [\eta,1]$. Since $0<\eta<1$, we have
\[
\alpha := \min_{p,q,r \in [\eta, 1]} g(p,q,r) \leq g(\eta, \eta, \eta) = \frac{3\eta^2-1}{2\eta^3} < 1.
\]
Note that $g(1,1,1)=1$. Thus, by the continuity of $g$, for each $t \in (\alpha,1]$, we can find $p,q,r\in  [\eta,1]$ such that $g(p,q,r)=t$.


Fix $Y \in \C$ such that $|Y| = 1$ and let $(X_n)_{n \geq 1} \subseteq \C$ be such that $|X_n| = 1$ and $X_n \to Y$, i.e., $X_n \overline{Y} \to 1$ as $n \to \infty$. In particular, $\Re(X_n \overline{Y}) > \alpha$ for all $n$ large enough, as $\alpha<1$. Fix any such $n$ sufficiently large; we claim that $\Re(f(X_n) \overline{f(Y)})\geq \Re(X_n \overline{Y})$. Suppose otherwise that $\Re(X_n \overline{Y})>\Re(f(X_n \overline{Y}))$. Since $\Re(X_n \overline{Y})\in (\alpha,1]$, we can choose $p,q,r \in [\eta,1]$ such that $g(p,q,r)=\Re(X_n \overline{Y})$, that is, $1-p^2-q^2-r^2+2pqr \Re(X_n \overline{Y})=0$; however, since $\Re(X_n \overline{Y})>\Re(f(X_n \overline{Y}))$ we have $1-p^2-q^2-r^2+2pqr \Re(f(X_n \overline{Y}))<0$, a contradiction. This proves the claim and consequently we have $\Re(f(X_n) \overline{f(Y)}) \to 1$ as $n \to \infty$, that is, $f(X_n) \to f(Y)$ as $n \to \infty$, as required. 

We have thus shown that the restriction of $f$ to the unit circle $\{z\in \C: |z|=1\}$ is continuous and injective. Hence, $f$ is an injective homomorphism of the unit circle into itself. Recall that the continuous homomorphisms of the unit circle group are all integer power functions (see e.g.~\cite[Proposition 7.1.1]{deitmar2005first}). We conclude that $f(z) \equiv z$ on the unit circle or $f(z) \equiv \overline{z}$ on the unit circle since $f$ needs to be injective. Since $f(rX)=rf(X)$ whenever $X$ is on the unit circle and $r \in I$, we conclude that $f(z) \equiv z$ on $\Omega$ or $f(z) \equiv \overline{z}$ on $\Omega$.

\textit{(The positive definite case)} The proof of the positive definite case is very similar with the help of a limiting argument. Let $f$ be a positive definite sign preserver over $M_3(\Omega)$. Following the proof above, we can show that we have $f(x)=x$ for $x\in I$ and $|f(z)|=|z|$ for all $z\in \Omega$. 



Here we point out the difference in showing $f$ is multiplicative on the unit circle. Let $\epsilon>0$ such that $1+\epsilon\in I$. Consider the same matrix $A$ as defined in equation~\eqref{eq:A} with $a,b,c=1+\epsilon$, $p,q,r=1$, and $X,Y,Z$ on the unit circle with $Y=XZ$, so that
$$
\det A=(1+\epsilon)^3-3(1+\epsilon)+2\Re(X\overline{Y}Z)=-2+3\epsilon^2+\epsilon^3+2\Re(X\overline{Y}Z)=3\epsilon^2+\epsilon^3>0
$$
and $\det f[A]=-2+3\epsilon^2+\epsilon^3+2\Re(f(X)\overline{f(Y)}f(Z))$. Note that $A$ and $f[A]$ are both positive definite. Letting $\epsilon \to 0$, we can deduce $f(X)\overline{f(Y)}f(Z)=1$, that is, $f$ is multiplicative on the unit circle. Following an almost identical argument as in the positive semidefinite case, we obtain that $f(z) \equiv z$ on $\Omega$ or $f(z) \equiv \overline{z}$ on $\Omega$.
\end{proof}

\begin{remark}\label{remarkC}
We remark that the above proof works for more general domains. For example, it can be easily adapted to domains that are given by polar rectangles of the form $\Omega=\{re^{i\theta}: r \in I, \theta \in J\}$, where $I,J$ are intervals with $0\in J$. However, Theorem \ref{main_thm_3} does not extend to all complex pd/psd-type subsets $\Omega$, as we illustrate below.

Let $k\geq 2$ be an integer and let $\epsilon>0$ be sufficiently small. Let $\Omega=(1-\epsilon, 1+\epsilon] \cup T$, where $T=\{z_1, \overline{z_1}, \ldots, z_k, \overline{z_k}\}$ is a subset of the unit circle, with $z_j=\exp(i \pi \cdot 5^{j-k-2})$ for $1\leq j \leq k$. Then $\Omega$ is of psd type. Let $f:\Omega \to \C$ be a function such that $f(x)=x$ for $x\in (1-\epsilon,1+\epsilon]$,  and for each $1\leq j \leq k$, $f(z_j)\in \{z_j,\overline{z_j}\}$ and $f(\overline{z_j})=\overline{f(z_j)}$. We claim that $f$ is a psd sign preserver on $\M_3(\Omega)$. Let $A$ be a Hermitian matrix in $\M_3(\Omega)$ as defined in equation~\eqref{eq:A}, where $a,b,c\in (1-\epsilon,1+\epsilon]$, and $x,y,z\in \Omega$. The function $f$ is a psd sign preserver on $\M_2(\Omega)$. Thus, it suffices to show $\det A\geq 0$ if and only if $\det f[A]\geq 0$. Similar to the computation above, we have
\begin{align*}
\det A&=abc-a|z|^2-b|y|^2-c|x|^2+2\Re(x\bar{y}z),\text{ and}\\
\det f[A]&=abc-a|z|^2-b|y|^2-c|x|^2+2\Re(f(x)\overline{f(y)}f(z)).
\end{align*}
By the definition of $z_j$'s, if $x\overline{y}z$ is real, then the product of two of $x,\overline{y},z$ has to be real. If the product of two of $x,\overline{y},z$ is real, then clearly $\Re(x\bar{y}z)=\Re(f(x)\overline{f(y)}f(z))$, and thus $\det A \geq 0$ if and only if $\det f[A]\geq 0$. Next, assume that $x\overline{y}z$ is not real and that the product of any two of $x,\overline{y},z$ is non-real; in particular, at most one of $x,y,z$ is real. We claim that in this case, when $\epsilon$ is sufficiently small, we always have $\det A<0$ and $\det f[A]<0$. Indeed, $\Re(x\overline{y}z)$ and $\Re(f(x)\overline{f(y)}f(z))$ are both at most $(1+\epsilon) \cos (\pi/5^{k+1})$, while $abc-a|z|^2-b|y|^2-c|x|^2\leq (1+\epsilon)^3-3(1-\epsilon)^3=-2+O(\epsilon)$.

\end{remark}

\begin{remark}\label{rem:M_n-case}
A natural question arises when $n\geq 3$, akin to the one addressed in Theorem~\ref{main_thm_1-oldcase}: what are the corresponding ``sign preservers'' over \textit{all} matrices in $M_n(\Omega)$? Our proofs in Section~\ref{Sreal3} and this section reveal that as in Theorem~\ref{main_thm_2} and Theorem~\ref{main_thm_3}, the set of these coincide with exactly the positive multiples of continuous field automorphisms.
\end{remark}

\section{Sign preservers for graphs}\label{sec:graphs}
This section is dedicated to proving Theorem~\ref{TsignG}. First, we introduce some basic terminology from graph theory. We denote the path graph, the cycle graph, and the complete graph on $n$ vertices by $P_n$, $C_n$, and $K_n$ respectively. Given a graph $G = (V,E)$, a subgraph of $G$ is a graph $G' = (V',E')$ where $V' \subseteq V$ and $E' \subseteq E \cap (V' \times V')$. Given a subset $V' \subseteq V$, the subgraph of $G$ induced by $V'$ is the subgraph $G' = (V', E')$ where $E' = E \cap (V' \times V')$. We begin by showing that positivity preservers on $M_G$ are also positivity preservers on $M_{G'}$ for any induced subgraph $G'$ of $G$. This is obvious when $0 \in \Omega$, but is non-trivial when $0$ is not in the domain.
\begin{lemma}\label{Lextension}
Let $G$ be a connected graph on $n \geq 3$ vertices and let $f: \Omega \to \C$ where $\Omega$ is any of $\R, \C$, or $(0,\infty)$. If $f_G$ is a positive definite preserver on $M_G(\Omega)$, then $f_{G'}$ is a positive definite preserver on $M_{G'}(\Omega)$ for all induced subgraphs $G'$ of $G$. The same holds if ``definite'' is replaced by ``semidefinite''.     
\end{lemma}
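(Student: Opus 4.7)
The plan is to reduce the claim to a matrix extension question: given a positive (semi)definite $A' \in M_{G'}(\Omega)$, construct a positive (semi)definite $A \in M_G(\Omega)$ whose $V(G') \times V(G')$ principal submatrix equals $A'$. The hypothesis then gives that $f_G[A]$ is positive (semi)definite, and since its $V(G')$-principal submatrix is precisely $f_{G'}[A']$, the conclusion follows because principal submatrices of positive (semi)definite matrices are positive (semi)definite.

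For $\Omega = \R$ or $\C$, which contain $0$, the block-diagonal extension $A := A' \oplus M I_{n - |V(G')|}$ with any $M > 0$ works in both the PD and PSD cases. All entries outside the $V(G') \times V(G')$ block are zero, which is admissible for non-edges of $G$ and also for any edges of $G$ incident to $V \setminus V(G')$ (since $0 \in \Omega$), and $A$ is positive (semi)definite exactly when $A'$ is.

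For $\Omega = (0, \infty)$ the zero-padding is no longer legal, so I would use a Schur-complement extension
\[
A = \begin{pmatrix} A' & \epsilon X \\ \epsilon X^T & M I + \epsilon Y \end{pmatrix},
\]
where $X$ and $Y$ are $\{0,1\}$-matrices encoding the adjacency indicators of edges of $G$ between $V(G')$ and $V \setminus V(G')$ and within $V \setminus V(G')$, respectively. In the PD case, taking $\epsilon > 0$ small and $M > 0$ large compared to $\|(A')^{-1}\|_{\mathrm{op}}$ makes the Schur complement $MI + \epsilon Y - \epsilon^2 X^T (A')^{-1} X$ positive definite, so $A \in M_G((0,\infty))$ is positive definite and has the prescribed principal submatrix.

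The PSD case for $\Omega = (0,\infty)$ is the main obstacle, because when $A'$ is singular, a PSD extension in $M_G((0,\infty))$ need not exist. For instance, the all-ones $2 \times 2$ matrix cannot be extended to a PSD matrix in $M_G((0,\infty))$ when $G$ is obtained from $G' = K_2$ by attaching a pendant to a single vertex of $V(G')$, as a direct determinant computation shows. I would bypass this by a continuity bootstrap: the PD extension above, applied with $G' = K_2$ on any edge of $G$ (which exists since $G$ is connected on $n \geq 3$ vertices), shows that $f$ maps PD matrices in $M_2((0,\infty))$ into PSD matrices, whence Corollary~\ref{Cpdtopsd} forces $f$ to be continuous on $(0,\infty)$. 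Then, for any PSD $A' \in M_{G'}((0,\infty))$ and each $\delta > 0$, the Schur construction applied to $A' + \delta I$ (which is PD) with the scaling $\epsilon = \sqrt{\delta}$ keeps the Schur complement $MI + \sqrt{\delta}\,Y - \delta X^T (A' + \delta I)^{-1} X$ uniformly bounded as $\delta \to 0^+$, and produces $A_\delta \in M_G((0,\infty))$ that is PD. The hypothesis then yields $f_{G'}[A' + \delta I]$ PSD for every $\delta > 0$, and sending $\delta \to 0^+$ while invoking entrywise continuity of $f$ together with closedness of the PSD cone delivers $f_{G'}[A']$ PSD.
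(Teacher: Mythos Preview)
Your proof is correct and follows essentially the same route as the paper: extend the matrix to $M_G(\Omega)$ using small positive entries on the new edges and a Schur-complement positivity check for the definite case, then for the semidefinite case over $(0,\infty)$ bootstrap continuity of $f$ via Corollary~\ref{Cpdtopsd} (applied on a $K_2$ inside $G$) and pass to the limit through $A'+\delta I$. Your case split ($0\in\Omega$ via block-diagonal padding versus $\Omega=(0,\infty)$ via the Schur extension) is a slight simplification over the paper's uniform treatment, and your $\epsilon=\sqrt{\delta}$ scaling is harmless but unnecessary---any positive-definite extension of $A'+\delta I$ suffices, since you only need $f_{G'}[A'+\delta I]$ positive semidefinite for each $\delta>0$ before letting $\delta\to 0^+$.
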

\begin{proof}
We assume $V' =\{1,\dots,k\}$ with $1 \leq k < n$, and we let $G'$ be the induced subgraph of $G$ by $V'$. First, assume that $f_G$ is a positive definite preserver on $M_G(\Omega)$ and let $A \in M_{G'}(\Omega)$ be positive definite. For $\epsilon > 0$, define $M(\epsilon) \in M_G(\Omega)$ by 
\[
M(\epsilon) := \begin{pmatrix}
A & B \\
B^* & C
\end{pmatrix}, 
\]
where $b_{ij} = \epsilon$ if $i$ and $j$ are adjacent in $G$ and zero otherwise, $c_{ii} = 1$ for all $i$, and for $i \ne j$, $c_{ij} = \epsilon$ if $i$ and $j$
 are adjacent in $G$ and $0$ otherwise. For $\epsilon$ small enough, the matrix $C$ is positive definite and the Schur complement $A - BC^{-1}B^*$ is positive definite. Thus, $M(\epsilon)$ is positive definite for $\epsilon > 0$ small enough. By assumption, $f_G[M(\epsilon)]$ is positive definite and therefore so is $f_{G'}[A]$. This proves $f_{G'}$ is a positive definite preserver on $M_{G'}(\Omega)$. 


Now, suppose $f_G$ is a positive semidefinite preserver on $M_G(\Omega)$. Since $G$ is connected, it has a copy of $K_2$, and so it follows that for any positive definite $A \in M_2((0,\infty))$, the matrix $f[A]$ is positive semidefinite. By Corollary \ref{Cpdtopsd}, the function $f$ is continuous on $(0, \infty)$. Now, let $A \in M_{G'}(\Omega)$ be positive semidefinite. Using the same argument as in the positive definite part of the proof, for $\eta > 0$, extend the matrix $A + \eta I$ to a positive definite matrix $M \in M_G(\Omega)$. By assumption $f_G[M]$ is positive semidefinite and it follows that $f_{G'}[A + \eta I]$ is positive semidefinite for every $\eta > 0$. Letting $\eta \to 0^+$ and using the continuity of $f$ on $(0, \infty)$, we conclude that $f_{G'}[A]$ is positive semidefinite. This proves $f_{G'}$ is a positive semidefinite preserver on $M_{G'}(\Omega)$.  
 \end{proof}

We next show that when $G$ has at least $3$ vertices, the non-constant positivity preservers on $M_G((0,\infty))$ are necessarily non-bounded.
\begin{lemma}\label{Lunbounded}
Let $f: (0,\infty) \to \R$ with $f$ non-constant. Let $G$ be a connected graph on $n \geq 3$ vertices. Suppose $f_G[A]$ is positive semidefinite for all positive semidefinite $A \in M_G((0,\infty))$. Then $\lim_{x \to \infty} f(x) = \infty$. The same holds if ``semidefinite'' is replaced by ``definite''.  
\end{lemma}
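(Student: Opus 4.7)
The plan is to reduce to the $2 \times 2$ case via Lemma~\ref{Lextension} and then run a convexity argument on the logarithmic transform of $f$. Since $G$ is connected with $n \geq 3$ vertices, it contains an edge, hence $K_2$ as an induced subgraph. By Lemma~\ref{Lextension}, $f$ is a positive semidefinite preserver on $M_2((0,\infty))$. Applying Proposition~\ref{2by2preservers:psd} with $\Omega = I = (0,\infty)$ yields that $f$ is nonnegative, nondecreasing, multiplicatively midconvex, and continuous on $(0,\infty)$ (the upper endpoint $+\infty$ lies outside $I$). The dichotomy ``either identically zero or never zero'' from that proposition, together with the non-constancy hypothesis, forces $f > 0$ throughout $(0,\infty)$.

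Next, I perform the standard logarithmic change of variables: set $g(t) := \log f(e^t)$ for $t \in \R$. Continuity and positivity of $f$ make $g$ well-defined and continuous on $\R$; monotonicity of $f$ makes $g$ nondecreasing; and multiplicative midconvexity of $f$ becomes the usual midpoint convexity $g\bigl(\tfrac{a+b}{2}\bigr) \leq \tfrac{g(a)+g(b)}{2}$. A continuous midpoint-convex function is convex, so $g$ is convex on $\R$.

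The key observation is the classical fact that a convex function on $\R$ bounded above must be constant: if $g(a) < g(b)$ for some $a<b$, the three-chord inequality yields $g(t) \geq g(b) + \tfrac{g(b)-g(a)}{b-a}(t-b) \to +\infty$ as $t \to \infty$, contradicting boundedness. Assuming for contradiction that $f$ is bounded above, $g$ would be bounded above and hence constant, forcing $f$ to be constant on $(0,\infty)$ and contradicting the hypothesis. Therefore $f$ is unbounded; combined with monotonicity, $\lim_{x \to \infty} f(x) = \infty$.

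For the positive definite variant, Proposition~\ref{2by2preservers} replaces Proposition~\ref{2by2preservers:psd} and yields $f$ strictly positive, strictly increasing, and continuous on $(0,\infty)$ directly (so the non-constancy hypothesis is in fact automatic); the remainder of the argument transfers verbatim. I do not anticipate a substantive obstacle: the structural conclusions packaged in Propositions~\ref{2by2preservers} and~\ref{2by2preservers:psd} do the heavy lifting, and the ``convex plus bounded above implies constant on $\R$'' step is elementary.
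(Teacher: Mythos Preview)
Your proof is correct and takes a genuinely different route from the paper's. Both start by passing to $K_2$ via Lemma~\ref{Lextension} and invoking Proposition~\ref{2by2preservers:psd} (or~\ref{2by2preservers}) to get nonnegativity, monotonicity, continuity, and the ``never zero'' dichotomy. From there the paper splits into cases according to whether $G$ contains an induced $K_3$ or an induced $P_3$: in the $K_3$ case it cites Horn's theorem to obtain convexity of $f$, and in the $P_3$ case it uses a concrete $3\times 3$ test matrix to derive $f(3n)\geq 2f(n)$ and hence geometric growth. You bypass this case split entirely by exploiting the multiplicative midconvexity already delivered by Proposition~\ref{2by2preservers:psd}: after the logarithmic change of variables, $g(t)=\log f(e^t)$ is continuous and midpoint-convex on all of $\R$, hence convex, and a convex function on $\R$ bounded above is constant. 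This is shorter, uses only the $K_2$ embedding, and in fact shows the $n\geq 3$ hypothesis is inessential for the conclusion (it is only needed to quote Lemma~\ref{Lextension} as stated). The paper's approach, by contrast, makes direct use of the larger induced subgraphs available when $n\geq 3$, which is more in the spirit of the surrounding section but requires more casework.
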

\begin{proof}
Since $G$ is connected, it contains a copy of $K_2$. Thus, Lemma~\ref{Lextension} implies that $f$ preserves positive semidefiniteness on $M_2((0,\infty))$. In particular, by Proposition \ref{2by2preservers:psd}, the function $f$ is continuous, nondecreasing and never zero. Suppose first $G$ does not contain $K_3$ as an induced subgraph. Then it contains $P_3$ as an induced subgraph and Lemma~\ref{Lextension} implies that $f_{P_3}$ preserves positive semidefiniteness on $M_{P_3}((0,\infty))$. For $n > 0$, consider the positive definite matrix 
\[
A = \begin{pmatrix}
3n & n & n \\
n & n & 0 \\
n & 0 & n
\end{pmatrix}.
\]
Then $f[A]$ is positive semidefinite and so 
$
\det f_{P_3}[A] = f(n)^2 (f(3n)-2f(n)) \geq 0. 
$
Since $f(n) \ne 0$, we conclude that $f(3n) \geq 2 f(n)$. It follows that $f(3^k) \geq 2^k f(1)$ for all $k \geq 1$. Since $f(1)>0$, we obtain $\lim_{x \to \infty} f(x) = \infty$. 

Now, suppose $G$ contains $K_3$ as an induced subgraph. Then Lemma~\ref{Lextension} implies that $f$ preserves positivity on $M_3((0,\infty))$. By \cite[Theorem 1.2]{horn1969theory}, the function $f$ is convex. Since it is also non-constant, it follows that $f$ is unbounded.

To prove the positive definite case, first observe that by Proposition \ref{2by2preservers}, the function $f$ is continuous on $(0,\infty)$. Approximating positive semidefinite matrices $A \in M_G((0,\infty))$ by $A + \epsilon I$ for $\epsilon > 0$, we obtain that $f_G$ preserves positive semidefiniteness on $M_G((0,\infty))$, and the result follows.  
\end{proof}

Our next result shows how induced subgraphs inherit sign preservers. 
\begin{proposition}\label{Tinduced}
Let $G$ be a connected graph on $n \geq 3$ vertices and let $f: \Omega \to \C$ be non-constant, where $\Omega$ is any of $\R, \C$, or $(0,\infty)$. Then the following are equivalent: 
\begin{enumerate}
\item $f_G$ is a positive definite sign preserver on $M_G(\Omega)$. 
\item $f_{G'}$ is a positive definite sign preserver on $M_{G'}(\Omega)$ for all induced subgraphs $G'$ of $G$. 
\end{enumerate}
The same holds if ``definite'' is replaced by ``semidefinite''. 
\end{proposition}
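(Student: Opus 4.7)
The direction $(2) \Rightarrow (1)$ is trivial on taking $G' = G$. For $(1) \Rightarrow (2)$, fix an induced subgraph $G'$ on vertex set $V' \subsetneq V$. Lemma~\ref{Lextension} already supplies the preservation direction ``$A$ PD (PSD) $\Rightarrow f_{G'}[A]$ PD (PSD)''; the task is the converse.

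The main device is a Schur-complement controlled extension. For $\epsilon, \lambda > 0$ in $\Omega$, define
\[
M(\epsilon, \lambda) = \begin{pmatrix} A & B(\epsilon) \\ B(\epsilon)^{*} & C(\epsilon, \lambda) \end{pmatrix} \in M_G(\Omega),
\]
where $B(\epsilon)$ has entry $\epsilon$ at each $G$-edge between $V'$ and $V \setminus V'$ (and $0$ elsewhere) and $C(\epsilon, \lambda)$ has $\lambda$ on the diagonal, $\epsilon$ at each edge of $G[V \setminus V']$, and $0$ at non-edges. Fixing $\epsilon$ small and letting $\lambda \to \infty$, diagonal dominance gives $C(\epsilon, \lambda)$ PD with $\|C^{-1}\| = O(1/\lambda)$; by Schur complement, $M(\epsilon, \lambda)$ is PD iff $A - B C^{-1} B^{*}$ is, a norm-$O(\epsilon^{2}/\lambda)$ perturbation of $A$. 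Hence for $\lambda$ sufficiently large, $M(\epsilon, \lambda)$ PD iff $A$ PD. The same Schur calculation on $f_G[M(\epsilon,\lambda)]$ goes through once $f[C(\epsilon,\lambda)]$ is diagonally dominant PD, which holds since $f(\lambda) \to \infty$ by Lemma~\ref{Lunbounded} (whose non-constancy hypothesis is verified below); the resulting error has norm $O(f(\epsilon)^{2}/f(\lambda)) \to 0$. Thus for $\lambda$ large, $f_G[M(\epsilon, \lambda)]$ PD iff $f_{G'}[A]$ PD, and combining with the sign-preserver hypothesis $M$ PD iff $f_G[M]$ PD gives $A$ PD iff $f_{G'}[A]$ PD, which is the PD sign-preserver property on $M_{G'}$.

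That $f|_{(0,\infty)}$ is non-constant is forced by the hypothesis: were $f|_{(0,\infty)} \equiv c$, then $f_G[A] = c(I + A_G)$ (with $A_G$ the adjacency matrix of $G$) would be the same matrix for every $A \in M_G((0,\infty))$, yet that class contains both PSD and non-PSD matrices of the form $tI + sA_G$ (choose $t/s$ on either side of $|\lambda_{\min}(A_G)|$; the negative eigenvalue of $A_G$ exists since it has trace $0$ and, by connectedness and $n \ge 3$, a positive Perron eigenvalue). For the PSD case, the Schur estimate alone is too crude at the PSD boundary, since subtracting a small PSD correction from a PSD matrix with a zero eigenvalue can produce a non-PSD matrix. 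I would therefore first establish the PD sign preserver on $M_{G'}$ by the above extension, and then perturb: for $A$ with $f_{G'}[A]$ PSD and all $a_{ii} \ge 0$, one has $f_{G'}[A + \eta I] = f_{G'}[A] + D(\eta)$ where $D(\eta)$ is a strictly positive diagonal matrix (using strict monotonicity of $f|_{(0,\infty)}$, which follows from the sign preserver applied to matrices $tI + sA_G$ with entries in a putative constancy interval of $f$), hence $f_{G'}[A + \eta I]$ is PD; the PD sign preserver on $M_{G'}$ then gives $A + \eta I$ PD for every $\eta > 0$, so $A$ is PSD. Cases with some $a_{ii} \le 0$ reduce to the single-vertex sign preserver, which forces $f(0) = 0$ (when $0 \in \Omega$) and $\sgn(f(x)) = \sgn(x)$, together with an induction on $|V(G')|$.

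The main obstacle is the PSD step: the direct Schur-complement estimate is not sharp at the PSD boundary and must be refined by a perturbation into the PD interior, at which point the PD case supplies the finishing tool via the strict monotonicity of $f|_{(0,\infty)}$.
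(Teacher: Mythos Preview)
Your positive-definite argument is correct and matches the paper's: both build an extension via Schur complement and invoke Lemma~\ref{Lunbounded} to push the diagonal term to infinity, thereby transferring the sign-preserver property down to $G'$. The paper adds one vertex at a time with off-diagonal entries equal to $1$, while you add the whole block at once with parameters $\epsilon,\lambda$; these are equivalent.

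The positive-semidefinite sketch has two genuine gaps. First, you write ``first establish the PD sign preserver on $M_{G'}$ by the above extension'', but your extension argument used the PD sign-preserver hypothesis on $G$, which you do not have in the PSD case. What you \emph{can} extract from the PSD hypothesis on $G$ is the one-sided implication ``$f_{G'}[B]$ PD $\Rightarrow$ $B$ PSD'' (embed $B$, use that $f_G[M]$ is PD for $\lambda$ large, hence PSD, hence $M$ PSD, hence $B$ PSD). This weaker statement is exactly what your perturbation argument actually needs, so the repair is minor---but the claim as written is false.

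Second, and more seriously, your proof of strict monotonicity via $tI+sA_G$ with $t,s$ in a putative constancy interval $[a,b]$ does not work: you need $t/s$ to straddle $|\lambda_{\min}(A_G)|$, but $t/s\in[a/b,b/a]$ may be far too narrow (e.g.\ $[a,b]=[1,1.1]$ while $|\lambda_{\min}|=\sqrt{n-1}$ for a star). The paper closes this by a case split: if $G$ contains an induced $P_3$, a direct computation gives superadditivity $f(x+y)\ge f(x)+f(y)$; otherwise $G$ contains $K_3$, so $f$ is convex by Horn's theorem, and convexity plus nondecreasing forces any constancy interval to extend all the way down to $0$, after which a concrete $3\times3$ matrix produces a contradiction. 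Without one of these ingredients your monotonicity claim is unsupported, and the perturbation step $f_{G'}[A+\eta I]=f_{G'}[A]+D(\eta)$ with $D(\eta)$ strictly positive cannot be justified.
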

\begin{proof}
The $(2) \implies (1)$ implication is trivial since $G$ is an induced subgraph of itself. We prove $(1) \implies (2)$. Let $G'$ be an induced subgraph of $G$. Without loss of generality, assume $f(1) = 1$, label the vertices of $G'$ by $\{1,2,\dots,k\}$, and label the remaining vertices of $G$ by $\{k+1, \dots, n\}$. For each $1\leq i \leq n$, let $G_i$ be the subgraph of $G$ induced on $\{1,\dots i\}$. In particular, $G'=G_k$.

We first prove the positive definite case. Suppose $f_G$ is a positive definite sign preserver on $M_G(\Omega)$. Then by Lemma~\ref{Lextension}, the function $f_{G'}$ preserves positive definiteness on $M_{G'}(\Omega)$. Now, let $A_k \in M_{G'}(\Omega)$ be a Hermitian matrix that is not positive definite. We extend the matrix $A_k \in M_{G_k}(\Omega)$ to a matrix in $M_G(\Omega)$ inductively as follows. For a given $k \leq i \leq n-1$, suppose $A_i \in M_{G_i}(\Omega)$ has already been constructed. We define $A_{i+1} \in M_{G_{i+1}}(\Omega)$ by 
\begin{equation}\label{EAnp1}
A_{i+1} := \begin{pmatrix}
A_i & {\bf v}_i \\
{\bf v}_i^* & d_{i+1}
\end{pmatrix}, 
\end{equation}
where $d_{i+1} > 0$ and where ${\bf v}_i \in \R^i$ is given by
\[
({\bf v}_i)_j = \begin{cases}
1 & \textrm{if } (j, i+1) \in E, \\
0 & \textrm{otherwise}. 
\end{cases}
\]
Using Schur complements, the matrix $A_{i+1}$ is positive definite if and only if $A_i - \frac{1}{d_{i+1}} {\bf v}_i {\bf v}_i^*$ is positive definite. Similarly, $f_{G_{i+1}}[A_{i+1}]$ is positive definite if and only
if $f_{G_{i}}[A_i] - \frac{1}{f(d_{i+1})}{\bf v}_i{\bf v}_i^*$ is positive definite. Now, consider the matrix $A_n \in M_G(\Omega)$. Since its leading $k \times k$ principal submatrix $A_k$ is not positive definite, the matrix $A_n$ is not positive definite for any choice of $d_{k+1}, \dots, d_n > 0$. By assumption, $f_G[A_n]$ is not positive definite. Thus, $f_{G_{n-1}}[A_{n-1}] - \frac{1}{f(d_{n})} {\bf v}_{n-1} {\bf v}_{n-1}^*$ is not positive definite. Letting $d_{n} \to \infty$ and using Lemma \ref{Lunbounded}, we conclude  that $f_{G_{n-1}}[A_{n-1}]$ is not positive definite for any choice of $d_{k+1}, \dots, d_{n-1} > 0$. Recursively proceeding in the same manner, we conclude that $f_{G'}[A_k]$ is not positive definite. This proves $f_{G'}$ is a positive definite sign preserver on $M_{G'}(\Omega)$.

Next, we address the positive semidefinite case. Suppose now $f_G$ is a positive semidefinite sign preserver on $M_G(\Omega)$. Since $G$ contains a copy of $K_2$, the function $f$ preserves positive semidefiniteness on $M_{K_2}(\Omega) = M_2(\Omega)$ by Lemma~\ref{Lextension}. By Proposition \ref{2by2preservers:psd}, $f$ is non-decreasing and never $0$ on $(0,\infty)$. We first prove that $f$ is increasing on $(0,\infty)$. Suppose first that $G$ contains $P_3$ as an induced subgraph. Since $f_G$ is a sign preserver on $M_G(\Omega)$, it preserves positive semidefiniteness on $M_{P_3}(\Omega)$ by Lemma~\ref{Lextension}. For $x, y > 0$, consider the positive semidefinite matrix 
\[
A = \begin{pmatrix}
x + y & x & y \\
x & x & 0 \\
y & 0 & y
\end{pmatrix} \in M_{P_3}(\Omega).
\]
Then $f_{P_3}[A]$ is positive semidefinite and so 
$
\det f_{P_3}[A] = f(x)f(y)(f(x+y)-f(x)-f(y)) \geq 0.
$
We conclude that $f(x+y) \geq f(x) + f(y)$ and, since $f$ is never $0$, it must be increasing on $(0,\infty)$.

Now assume that $G$ does not contain $P_3$ as an induced subgraph, then $G$ contains $K_3$ as an induced subgraph. Then $f$ preserves positive semidefiniteness on $M_3(\Omega)$ by Lemma~\ref{Lextension}. By \cite[Theorem 1.2]{horn1969theory}, the function $f$ is convex. Let $x, y \in (0,\infty)$ with $x < y$ satisfy $f(x) = f(y)$. Since $f$ is non-decreasing, it must be constant on the interval $[x,y]$. For any $a < x$, write $x = \lambda a + (1-\lambda) y$ for some $0 < \lambda < 1$. By convexity, we obtain 
\[
f(x) = f(\lambda a + (1-\lambda)y) \leq \lambda f(a) + (1-\lambda) f(y) = \lambda f(a) + (1-\lambda) f(x). 
\]
This implies $f(a) \geq f(x)$ and therefore $f(a) = f(x)$ since $f$ is non-decreasing. Therefore, $f \equiv c$ on $(0,y]$. By Lemma \ref{Lunbounded}, there exists $z > y$ such that $f(z) > c$. Without loss of generality, choose such a $z$ such that $f \equiv c$ on $(0,0.7z]$. Consider the matrix 
\[
A = \begin{pmatrix}2z & z/2 & z \\
z/2 & z/2 & z/2 \\
z & z/2 & 0.7z
\end{pmatrix} = z \begin{pmatrix}
2 & 1/2 & 1 \\
1/2 & 1/2 & 1/2 \\
1 & 1/2 & 0.7
\end{pmatrix} \in M_3(\Omega). 
\]
Then $A$ is positive semidefinite and  
$
\det f[A] = -c (f(z)-c)^2 < 0. 
$
This contradicts the fact that $f$ preserves positive semidefiniteness on $M_3(\Omega)$. We therefore conclude that $f$ is increasing. 

Now, since $f$ is a positive semidefinite sign preserver on $M_G(\Omega)$, the map $f_{G'}$ is a positive semidefinite preserver on $M_{G'}(\Omega)$ by Lemma~\ref{Lextension}. Let $A_k \in M_{G'}(\Omega)$ be a Hermitian matrix that is not positive semidefinite. Proceed as in the positive definite case (Equation \eqref{EAnp1}) to construct matrices $A_{k+1}, \dots, A_n$. Since the $k \times k$ leading principal submatrix of $A_n$ is not positive semidefinite, the matrix $A_n$ is also not positive semidefinite for any choice of $d_{k+1}, \dots, d_n > 0$. Hence, $f_G[A_n]$ is not positive semidefinite. Using Schur complements, the matrix $f_{G_{n-1}}[A_{n-1}] - \frac{1}{f(d_{n})} {\bf v}_{n-1} {\bf v}_{n-1}^*$ is not positive semidefinite as well. Suppose for a contradiction that $f_{G_{n-1}}[A_{n-1}]$ is positive semidefinite. Since $A_k$ is not positive semidefinite, there exists $\epsilon_0 > 0$ such that $A_k + \epsilon_0 I_k$ is not positive semidefinite. This implies $A_n + \epsilon_0 I_n$ is not positive semidefinite, and therefore, neither are the matrices $f_G[A_n + \epsilon_0 I_n]$ and $f_{G_{n-1}}[A_{n-1} + \epsilon_0 I_{n-1}] - \frac{1}{f(d_{n} + \epsilon_0)} {\bf v}_{n-1} {\bf v}_{n-1}^*$ for any choice of $d_{k+1}, \dots, d_n > 0$. Now, since $f_{G_{n-1}}[A_{n-1}]$ is positive semidefinite and since $f$ is increasing on $(0, \infty)$, we have $f_{G_{n-1}}[A_{n-1}+\epsilon_0 I_{n-1}] = f_{G_{n-1}}[A_{n-1}] + D$, where $D$ is a diagonal matrix with positive diagonal entries. Thus $f_{G_{n-1}}[A_{n-1}+\epsilon_0 I_{n-1}]$ is positive definite. Letting $d_{n} \to \infty$ and using Lemma \ref{Lunbounded}, it follows that $f_{G_{n-1}}[A_{n-1} + \epsilon_0 I_{n-1}] - \frac{1}{f(d_{n} + \epsilon_0)} {\bf v}_{n-1} {\bf v}_{n-1}^*$ is positive definite for values of $d_{n}$ large enough, a contradiction. We therefore conclude that $f_{G_{n-1}}[A_{n-1}]$ is not positive semidefinite. Recursively proceeding in the same manner, we conclude that $f_{G'}[A_k]$ is not positive semidefinite. This proves $f_{G'}$ is a positive semidefinite sign preserver on $M_{G'}(\Omega)$. 
\end{proof}

\begin{remark}
For simplicity, Proposition \ref{Tinduced} was stated only for $\Omega = \R, \C$, or $(0,\infty)$. However, we note that the result holds with the same proof on more general domains (e.g., $\Omega = [0,\infty)$). 
\end{remark}

The following technical lemma is crucial in our proof of Theorem \ref{TsignG}. Let $n \geq 3$ and $\Lambda \in M_{C_n}(\C)$. Up to simultaneously permuting the rows and columns of $A$, we can assume it is of the form 
\begin{equation}\label{EqnA}
\Lambda = \Lambda(T, \lambda_{n,n}, \alpha, \beta) = \begin{pmatrix}
    T & v \\
    v^* & \lambda_{n,n}
\end{pmatrix}, 
\end{equation}
where $T \in M_{P_{n-1}}(\C)$ is tridiagonal, and $v = (\alpha, 0, \dots, 0, \beta)^T \in \C^{n-1}$. 

\begin{lemma}\label{Lrealpart}
Let $n \geq 3$. Let $\alpha = r_1 e^{i\theta_1}, \beta = r_2 e^{i\theta_2} \in \C$, where $r_1, r_2 > 0$ and $\theta_1, \theta_2 \in (-\pi, \pi]$. Then for any $\epsilon > 0$, there exist $T \in M_{P_{n-1}}((0,\infty))$ and $\lambda_{n,n} \in (0,\infty)$ such that the matrix $\Lambda(T, \lambda_{n,n}, \alpha, \beta)$ defined as in equation \eqref{EqnA} is positive definite and, for $a, b \in \C$ with $|a| = r_1$ and $|b| = r_2$, we have:
\begin{enumerate}
\item If $n$ is odd and $\Re(a\overline{b}) < \Re(\alpha \overline{\beta}) - \epsilon$, the matrix $\Lambda(T, \lambda_{n,n}, a, b)$ is not positive semidefinite; 
\item If $n$ is even and $\Re(a\overline{b}) > \Re(\alpha\overline{\beta}) +\epsilon$, the matrix $\Lambda(T, \lambda_{n,n}, a, b)$ is not positive semidefinite.
\end{enumerate}
\end{lemma}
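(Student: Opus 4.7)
The plan is to reduce the positive (semi)definiteness test to a single Schur-complement computation. Set $v_{x,y} := (x, 0, \ldots, 0, y)^T \in \C^{n-1}$, so that $\Lambda(T, \lambda_{n,n}, x, y)$ has $v_{x,y}$ as its last column (apart from $\lambda_{n,n}$). For any positive definite $T$,
\[
\det \Lambda(T, \lambda_{n,n}, x, y) = \det T \cdot \bigl(\lambda_{n,n} - v_{x,y}^* T^{-1} v_{x,y}\bigr).
\]
Because $T$ is real symmetric, so is $T^{-1}$, and
\[
v_{x,y}^* T^{-1} v_{x,y} = |x|^2 (T^{-1})_{1,1} + |y|^2 (T^{-1})_{n-1,n-1} + 2\Re(x\overline{y})(T^{-1})_{1, n-1}.
\]
Since $|a|=|\alpha|=r_1$ and $|b|=|\beta|=r_2$, replacing $(\alpha, \beta)$ by $(a, b)$ only changes the last summand, by a factor of $2(T^{-1})_{1, n-1}\bigl(\Re(a\overline{b}) - \Re(\alpha\overline{\beta})\bigr)$.

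The crucial step is to pin down the sign of $(T^{-1})_{1, n-1}$ for a tridiagonal $T$ with positive entries. By the adjugate formula, $(T^{-1})_{1, n-1} = \frac{(-1)^n}{\det T}\det M_{n-1, 1}$. Deleting row $n-1$ and column $1$ of such a $T$ yields a lower triangular matrix whose diagonal is the list $b_1, b_2, \ldots, b_{n-2}$ of positive super-diagonal entries of $T$; hence $\det M_{n-1, 1} = \prod_{k=1}^{n-2} b_k > 0$, and $(T^{-1})_{1, n-1}$ is nonzero with sign $(-1)^n$. A short case-check then shows that both hypotheses in the lemma precisely force
\[
(T^{-1})_{1, n-1}\bigl(\Re(a\overline{b}) - \Re(\alpha\overline{\beta})\bigr) > \epsilon\,|(T^{-1})_{1, n-1}| > 0.
\]

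With this in place, I would fix any positive definite tridiagonal $T \in M_{P_{n-1}}((0, \infty))$ (for example, $T = 2I + \text{adjacency of } P_{n-1}$) and define $\lambda_{n,n} := v_{\alpha, \beta}^* T^{-1} v_{\alpha, \beta} + \mu$ with a small $\mu > 0$ to be chosen. Since $T^{-1}$ is positive definite and $v_{\alpha, \beta} \ne 0$, this $\lambda_{n,n}$ lies in $(0, \infty)$, and $\det \Lambda(T, \lambda_{n,n}, \alpha, \beta) = \mu \det T > 0$. Together with the positive definiteness of $T$, Sylvester's criterion gives that $\Lambda(T, \lambda_{n,n}, \alpha, \beta)$ is positive definite. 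For the perturbed matrix, the earlier computation gives
\[
\det \Lambda(T, \lambda_{n,n}, a, b) = \det T \,\bigl[\mu - 2(T^{-1})_{1, n-1}\bigl(\Re(a\overline{b}) - \Re(\alpha\overline{\beta})\bigr)\bigr] < \det T \cdot \bigl(\mu - 2\epsilon|(T^{-1})_{1, n-1}|\bigr).
\]
Choosing any $\mu \in (0, 2\epsilon|(T^{-1})_{1, n-1}|)$ forces $\det \Lambda(T, \lambda_{n,n}, a, b) < 0$, so $\Lambda(T, \lambda_{n,n}, a, b)$ has a negative eigenvalue and fails to be positive semidefinite.

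The main obstacle is identifying the sign of $(T^{-1})_{1, n-1}$ as $(-1)^n$, since this parity flip is exactly what produces the two separate cases in the lemma. Once that sign is nailed down, the rest is a routine Schur-complement perturbation: place $\lambda_{n,n}$ just above the positive threshold $v_{\alpha, \beta}^* T^{-1} v_{\alpha, \beta}$ so that $\Lambda(T, \lambda_{n,n}, \alpha, \beta)$ is barely positive definite, and let the $\epsilon$-gap between $\Re(a\overline{b})$ and $\Re(\alpha\overline{\beta})$ drive the perturbed determinant strictly below zero.
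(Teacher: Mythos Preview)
Your proof is correct and follows essentially the same Schur-complement strategy as the paper: both reduce the question to the sign of $(T^{-1})_{1,n-1}$ and then tune $\lambda_{n,n}$ so that $\Lambda(T,\lambda_{n,n},\alpha,\beta)$ is just barely positive definite while the $\epsilon$-gap in $\Re(a\overline{b})$ forces the perturbed determinant negative. The only difference is cosmetic: the paper fixes the explicit tridiagonal matrix with diagonal $2R$ and off-diagonal $R$, computes $\det T_k=(k+1)R^k$ and the relevant inverse entries directly, whereas you work with an arbitrary positive-definite tridiagonal $T$ and read off the sign $(-1)^n$ of $(T^{-1})_{1,n-1}$ from the adjugate formula---a slightly cleaner route to the same endpoint.
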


\begin{proof}
Fix $R > 0$. For $k \geq 2$, let $T_k \in M_{P_k}((0,\infty))$ be the matrix with diagonal entries $2R$ and super/sub diagonal entries $R$. In other words, $T_k$ is a tridiagonal matrix with the diagonal entries $2R$ and other nonzero entries $R$. We claim that $\det T_k = (k+1) R^{k}$. This is immediately verified for $k=1, 2$. To prove the general case by induction, suppose this holds for matrices of dimensions $1, \dots, k$ for some $k \geq 2$. Expanding the determinant yields the recursion
\[
\det T_{k+1} = 2R \det T_k - R^2 \det T_{k-1} = 2R (k+1) R^k - R^2 k R^{k-1} = (k+2)R^{k+1}, 
\]
and the claim follows. 

Now, set $T := T_{n-1}$. The above calculation shows that all the leading principal minors of $T$ are positive and so $T$ is positive definite. Let
\[
\lambda_{n,n} = \frac{(n-1)(r_1^2+r_2^2)}{Rn} +  (-1)^{n}\frac{2}{Rn}\left(\Re(\alpha\overline{\beta})+(-1)^{n} \epsilon \right). 
\] 
Note that $\lambda_{n,n}>0$ since $r_1^2+r_2^2\geq 2|\Re(\alpha\overline{\beta})|$. Let $v = (a, 0, \dots, 0, b)^T$, where $a, b \in \C$ with $|a| = r_1$ and $|b| = r_2$. We claim that the resulting matrix $\Lambda(T, \lambda_{n,n}, a, b)$ has the required properties. Indeed, consider the Schur complement 
\[
S := \lambda_{n,n} - v^* T^{-1} v;
\]
since $T$ is positive definite, $\Lambda(T, \lambda_{n,n}, a,b)$ is positive definite if and only if $S>0$, and $\Lambda(T, \lambda_{n,n}, a,b)$ is positive semidefinite if and only if $S\geq 0$.

Using a calculation similar to the one for $\det T_k$, one can verify that
\[
(T^{-1})_{1,1} = (T^{-1})_{n-1,n-1} = \frac{n-1}{Rn}, \qquad \textrm{and} \qquad (T^{-1})_{1,n-1} = \frac{(-1)^{n}}{Rn}.
\]
Hence, we have 
\begin{align*}
S = \lambda_{n,n} - \frac{(n-1)(r_1^2+r_2^2)}{Rn} -  (-1)^{n} \frac{2}{Rn} \Re(a \overline{b}) = (-1)^{n} \frac{2}{Rn} \left(\Re(\alpha\overline{\beta}) - \Re(a\overline{b}) + (-1)^{n}\epsilon\right).
\end{align*}
When $(a,b) = (\alpha, \beta)$, we obtain $S > 0$ and so $\Lambda(T, \lambda_{n,n}, \alpha, \beta)$ is positive definite. When $n$ is odd, the matrix $\Lambda(T, \lambda_{n,n}, a, b)$ is positive semidefinite if and only if $\Re(\alpha\overline{\beta}) - \Re(a\overline{b}) - \epsilon \leq 0$. Thus, the matrix $\Lambda(T, \lambda_{n,n}, a, b)$ is not positive semidefinite whenever $\Re(a\overline{b})< \Re(\alpha\overline{\beta}) - \epsilon$. Similarly, when $n$ is even, the matrix $\Lambda(T, \lambda_{n,n}, a, b)$ is positive semidefinite if and only if $\Re(\alpha\overline{\beta}) - \Re(a\overline{b}) + \epsilon \geq 0$. Thus, the matrix $\Lambda(T, \lambda_{n,n}, a, b)$ is not positive semidefinite when $\Re(a\overline{b}) > \Re(\alpha\overline{\beta}) + \epsilon$. 
\end{proof}

We conclude the section by proving Theorem~\ref{TsignG}.

\begin{proof}[Proof of Theorem~\ref{TsignG}]
It is not difficult to show that $f_G$ is not a sign preserver when $f$ is constant. Hence, we assume below that $f$ is non-constant. We only prove the positive definite case. The proof of the positive semidefinite case is similar. 

(1) Suppose first $G$ is a tree and let $f_G$ be a sign preserver on $M_G(\Omega)$. Since $G$ contains $K_2$ as an induced subgraph, by Proposition \ref{Tinduced} and Theorem \ref{main_thm_1}, the sign preserver $f$ must be among the preservers in Theorem~\ref{main_thm_1} for $\alpha,\beta>0$. Without loss of generality, suppose $\alpha=1$.

Since $G$ is a tree on $n \geq 3$ vertices, it contains a copy of $P_3$ as an induced subgraph, and thus by Proposition \ref{Tinduced}, the function $f_{P_3}$ is a sign preserver on $M_{P_3}(\Omega)$. In particular, $f_{P_3}$ preserves positive definiteness on $M_{P_3}((0,\infty))$. By continuity and by extending $f$ to $[0,\infty)$ as $f(0) = 0$ when $\Omega =
(0, \infty)$, it also preserves positive semidefiniteness on $M_{P_3}([0,\infty))$. Using \cite[Theorem 2.2]{GKR-critG}, we conclude that $\beta \geq 1$. It remains to show that none of $\beta>1$ yields a sign preserver on $M_G(\Omega)$. Let $\beta > 1$. For $x \in (0,1)$, consider the matrices 
\[
B = B(x) = \begin{pmatrix}
1 & x & 0 \\
x & 1 & x \\
0 & x & 1
\end{pmatrix} \in M_{P_3}(\Omega).
\]
We have $\det B(x) = 1-2x^2$. Pick $x \in (0,1)$ such that $x^2 > 1/2$ and $x^{2\beta} < 1/2$. Then the matrix $B(x)$ is not positive semidefinite, but $f_{P_3}[B(x)]$ is positive definite. This contradicts the assumption that $f_{P_3}$ is a sign preserver. We therefore conclude that $\beta = 1$. 

Conversely, suppose (1b) holds and let $A \in M_G(\Omega)$ be a Hermitian matrix. We claim that the positive definiteness of $A$ is independent of the arguments of the off-diagonal entries of $A$. Indeed, for a given ordering of the vertices of $G$, let $G_i$ denote the subgraph of $G$ induced on $\{1,\dots, i\}$. Choose an ordering of the vertices such that $G_2 = K_2$ and, for $2 \leq i \leq n-1$, the graph $G_{i+1}$ is obtained from $G_i$ by adding a single edge $(s_i, i+1)$ to $G_i$, where $1 \leq s_i \leq i$. Let $A_i$ denote the $i \times i$ leading principal submatrix of $A$. Notice that, for $2 \leq i \leq n$, 
\[
A_i = \begin{pmatrix}
A_{i-1} & {\bf v}_{i-1} \\
{\bf v}_{i-1}^* & a_{i,i}
\end{pmatrix}, 
\]
where all the entries of ${\bf v}_{i-1}$ are $0$, except in position $s_{i-1}$. Clearly, the positive definiteness of $A_2$ does not depend on the argument of its off-diagonal entry. Suppose this is true for $A_{i-1}$ for some $3 \leq i \leq n$. Using Schur complements, the matrix $A_i$ is positive definite if and only if $a_{i,i} > 0$ and the matrix $A_{i-1} - \frac{1}{a_{i,i}}{\bf v}_{i-1}{\bf v}_{i-1}^*$ is positive definite. Observe that the entries of ${\bf v}_{i-1}{\bf v}_{i-1}^*$ only depend on the modulus of the $s_{i-1}$-th entry of ${\bf v}_{i-1}$. From this and using the inductive hypothesis, it follows that the positive definiteness of $A_i$ is independent of the arguments of its off-diagonal entries. Since $|f(z)| = \alpha |z|$ for all $z \in \Omega$, it follows immediately that $f_G$ is a positive definite sign preserver.   

(2) Suppose now $G$ is not a tree. Let $3 \leq k \leq n$ be the smallest integer such that $G$ contains a cycle $C_k$ as a subgraph. If $k=3$, then $G$ contains $K_3$ as an induced subgraph. By Proposition \ref{Tinduced} and Theorems \ref{main_thm_2} and \ref{main_thm_3}, we conclude that $f(z) \equiv \alpha z$ or $f(z) \equiv \alpha \overline{z}$ for some $\alpha > 0$. Now, suppose $k \geq 4$. We claim that $G$ contains $C_k$ as an induced subgraph. Indeed, if the subgraph induced on the vertices of $C_k$ contains a chord, then $G$ contains a cycle on less than $k$ vertices, contradicting the minimality of $k$. Therefore, by Proposition \ref{Tinduced}, the function $f_{C_k}$ is a positive definite sign preserver on $M_{C_k}(\Omega)$. Since $C_k$ contains $P_3$ as an induced subgraph, the proof of (1) shows that $f$ is among the preservers in Theorem~\ref{main_thm_1} with $\alpha>0$ and $\beta = 1$. Without loss of generality, assume $\alpha = 1$. In particular, $f(x) = x$ for all $x \in \Omega \cap \R$. We will prove that for all $z \in \Omega$, we have $\Re f(z) = \Re(z)$. Suppose for a contradiction that there exists $\gamma \in \Omega$ such that $\Re f(\gamma) \ne \Re( \gamma)$, say $|\Re f(\gamma) - \Re(\gamma)|>\epsilon$ for some $\epsilon > 0$. Suppose first $\Re f(\gamma) < \Re(\gamma) - \epsilon$ and suppose $k$ is odd. By Lemma \ref{Lrealpart}, one can choose $T \in M_{P_{k-1}}((0,\infty))$ and $a_{k,k} \in (0,\infty)$ such that the matrix $\Lambda(T, a_{k,k}, \gamma,1)$ is positive definite, but the matrix $f_{C_k}[\Lambda(T, a_{k,k},\gamma, 1)] = \Lambda(T, a_{k,k},f(\gamma), 1)$ is not positive semidefinite. If instead $\Re f(\gamma) > \Re(\gamma) + \epsilon$, apply Lemma \ref{Lrealpart} with $\gamma$ replaced by $f(\gamma)$ to obtain a matrix such that $\Lambda(T, a_{k,k}, f(\gamma), 1)$ is positive definite, but $\Lambda(T, a_{k,k}, \gamma, 1)$ is not positive semidefinite. Either case contradicts that $f_{C_k}$ is a sign preserver on $M_{C_k}(\Omega)$. The case where $n$ is even can be addressed similarly. This proves $\Re f(z) = \Re(z)$ for all $z \in \Omega$. Therefore, for each $z \in \Omega$, we must have $f(z) = z$ or $f(z) = \overline{z}$. 

Finally, assume there exist $z_1, z_2 \in \Omega \setminus \R$ such that $f(z_1) = z_1$ and $f(z_2) = \overline{z_2}$. Let $\epsilon > 0$ be such that $|\Re(z_1z_2) - \Re(z_1 \overline{z_2})| > \epsilon$. Using Lemma \ref{Lrealpart} as above, one can construct a Hermitian matrix $A \in M_{C_k}(\Omega)$ such that either $A$ is positive definite and $f_{C_k}[A]$ is not positive semidefinite, or $A$ is is not positive semidefinite and $f_{C_k}[A]$ is positive definite. This contradicts that $f_{C_k}$ is a sign preserver. We must therefore have $f(z) \equiv z$ or $f(z) \equiv \overline{z}$. This concludes the proof of $(2a) \implies (2b)$. The converse implication is trivial. 
\end{proof}

\section{Monotonicity preservers}\label{Smonotone}
Recall that given two $n \times n$ positive semidefinite matrices, we write $A \geq B$ if $A-B$ is positive semidefinite, and $A > B$ if $A-B$ is positive definite. We conclude the paper by showing how previous work on Loewner monotone maps \cite{fitzgerald1977fractional, guillot2015complete, hiai2009monotonicity} can be extended in the spirit of sign preservers. 

\begin{theorem}
Fix an integer $n \geq 2$ and let $f: [0,\infty) \to \R$. Then the following are equivalent: 
\begin{enumerate}
\item  We have $f[A] \geq f[B]$ if and only if $A \geq B$ for all positive semidefinite $A, B \in M_n([0,\infty))$.
\item  We have $f[A] > f[B]$ if and only if $A > B$ for all positive definite $A, B \in M_n([0,\infty))$. 
\item There exist $c > 0$ and $d\in \R$ such that $f(x) = cx+d$ for all $x\in [0,\infty)$.
\end{enumerate}
\end{theorem}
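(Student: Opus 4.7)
My plan is to prove $(3) \Leftrightarrow (1)$ and $(3) \Leftrightarrow (2)$. The forward implications $(3) \Rightarrow (1)$ and $(3) \Rightarrow (2)$ are immediate, since $f(x) = cx + d$ gives $f[A] - f[B] = c(A - B)$ entrywise, so the psd (respectively, pd) order is both preserved and reflected because $c > 0$.

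For $(1) \Rightarrow (3)$ I would first replace $f$ by $g := f - f(0)$, which is permissible since $f[A] - f[B] = g[A] - g[B]$ and $g(0) = 0$. Setting $B = 0$ in (1) then gives $A \geq 0 \Leftrightarrow g[A] \geq 0$, so $g$ is a positive semidefinite sign preserver on $\M_n([0,\infty))$ in the sense of Definition~\ref{defn:sign-preservers}. For $n \geq 3$, Theorem~\ref{main_thm_2} applied with $\Omega = [0,\infty)$ directly yields $g(x) = \alpha x$, which is~(3). For $n = 2$, Theorem~\ref{main_thm_1} yields $g(x) = \alpha x^\beta$ with $\alpha,\beta > 0$, and I would force $\beta = 1$ by testing (1) against $A = \begin{pmatrix} 1 & s \\ s & 1 \end{pmatrix}$ and $B = t I_2$: the condition reduces to $|s| \leq 1 - t \Leftrightarrow s^\beta \leq 1 - t^\beta$ for $s,t \in [0,1)$, and for $\beta \neq 1$ the thresholds $1 - t$ and $(1 - t^\beta)^{1/\beta}$ differ strictly (compare $1 - t^\beta$ with $(1-t)^\beta$), producing an $s$ in the gap that violates~(1).

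For $(2) \Rightarrow (3)$, the more delicate direction, I would again normalize $f(0) = 0$ and substitute $B = \epsilon I$ for the unavailable $B = 0$. Diagonal test matrices force $f$ to be strictly increasing on $(0,\infty)$. The crucial intermediate step is to show $f(0^+) := \lim_{r \to 0^+} f(r) = 0$: applying (2) to $A = \begin{pmatrix} a & r \\ r & a \end{pmatrix}$ and $B = a' I_2$ (direct-summed with matching blocks $c I_{n-2}$ and $c' I_{n-2}$ where $c > c'$, so that the $n \times n$ iff collapses to the $2 \times 2$ one when $n \geq 3$), the condition becomes $a - a' > r \Leftrightarrow f(a) - f(a') > |f(r)|$. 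If $f(0^+) = c_0 > 0$, then $|f(r)| \geq c_0$ for all small $r > 0$, forcing $f(a) - f(a') > c_0$ for every $a > a' > 0$; letting $a' \to a^-$ yields a left-hand jump of $f$ of size at least $c_0$ at every $a > 0$, contradicting the fact that a monotone function has only countably many discontinuities. With $f(0^+) = 0$ in hand, (2) applied with $B = \epsilon I$ in the limit $\epsilon \to 0^+$ (using $f(\epsilon) \to 0$ and strict monotonicity) yields $A > 0 \Leftrightarrow f[A] > 0$, so $f$ is a positive definite sign preserver on $\M_n((0,\infty))$. Theorems~\ref{main_thm_2} and~\ref{main_thm_1} then complete the argument, with $\beta = 1$ forced in the $n = 2$ case by the same test matrices as above in their strict (pd) form.

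The principal obstacle is the step $f(0^+) = 0$ in $(2) \Rightarrow (3)$: condition (2) supplies information only on the open pd cone, so continuity at $0$ is not directly available. The idea sketched converts a hypothetical jump at the origin into an uncountable family of jumps on $(0,\infty)$ via the iff relation on carefully chosen $2 \times 2$ blocks, and then appeals to the countability of discontinuities of monotone functions to derive the needed contradiction.
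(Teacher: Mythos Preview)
Your proof is correct and diverges from the paper's in two places worth noting.

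For $(1)\Rightarrow(3)$ in dimension $2$, after Theorem~\ref{main_thm_1} gives $g(x)=\alpha x^\beta$, the paper pins down $\beta=1$ by invoking the FitzGerald--Horn classification of Loewner-monotone powers (forcing $\beta\ge 1$) and then observing that $g$ is a bijection of $\P_2([0,\infty))$, so that $g^{-1}(x)=x^{1/\beta}$ is also Loewner monotone (forcing $1/\beta\ge 1$). Your threshold comparison between $1-t$ and $(1-t^\beta)^{1/\beta}$ is more elementary and self-contained, and it recycles verbatim for the strict version needed in $(2)\Rightarrow(3)$; the paper's route buys brevity at the cost of citing an external result.

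For the step $f(0^+)=0$ in $(2)\Rightarrow(3)$, the paper uses the single test $\epsilon I_2>(\epsilon/4)\mathbf{1}_{2\times2}$: under (2) this gives $g(\epsilon)I_2>g(\epsilon/4)\mathbf{1}_{2\times2}$, and passing to the limit yields $g^+(0)I_2\ge g^+(0)\mathbf{1}_{2\times2}$, which forces $g^+(0)=0$. Your jump argument is longer but has the virtue of using only strictly positive definite test pairs, whereas the paper's $B=(\epsilon/4)\mathbf{1}_{2\times2}$ is only positive semidefinite and so lies, strictly speaking, outside the quantifier in (2). One small omission in your write-up: you treat only $c_0>0$ explicitly. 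If $c_0<0$ the same contradiction goes through, since $|f(r)|\to|c_0|$ as $r\to0^+$ and the forward implication of your biconditional still yields $f(a)-f(a')\ge|c_0|$ for all $a>a'>0$.
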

\begin{proof}
Clearly $(3) \implies (1)$ and $(3) \implies (2)$.

We begin by showing $(1)\implies (3)$. It suffices to prove the result for $n=2$ as the general case follows by embedding $2 \times 2$ matrices into $M_n([0,\infty))$ via $A \mapsto A \oplus {\bf 0}_{(n-2) \times (n-2)}$. Suppose therefore $(1)$ holds and $n=2$. Set $g(x) := f(x) - f(0)$. Taking $B = {\bf 0}_{2 \times 2}$ shows that $g$ is a positive semidefinite sign preserver. By Theorem \ref{main_thm_1}, there exists $\alpha, \beta > 0$ such that $g(x) = \alpha x^\beta$. Without loss of generality, we can assume $\alpha = 1$. Since $g$ is monotone with respect to the Loewner ordering, by \cite[Theorem 2.4]{fitzgerald1977fractional} we must have $\beta \geq 1$. We claim that the map $A \mapsto f[A]$ is a bijection of the set of $2 \times 2$ positive semidefinite matrices on $M_2([0,\infty))$. Clearly, the map is injective. For a given positive semidefinite matrix $Y \in M_2([0,\infty))$, the matrix $X := Y^{\circ (1/\beta)}$ is also positive semidefinite (Theorem \ref{thm:fitz_horn_fractional}) and $g[X] = Y$. Thus $g[-]$ is surjective. Using the fact that $g[A] \geq g[B]$ if and only if $A \geq B$, it follows that $g^{-1}(x) = x^{1/\beta}$ is also a monotone power function. Thus $\beta \leq 1$ and we conclude $\beta = 1$. 

We now show $(2)\implies (3)$. As above, it suffices to prove this for $n=2$. Set $g(x) := f(x) - f(0)$. For $0 < a < b$, we have $bI_2 > aI_2$ and so $g(b) I_2 > g(a) I_2$. Thus $g$ is increasing on $(0,\infty)$ and $g^+(0) := \lim_{x \to 0^+} g(x)$ exists. For $\epsilon > 0$, observe that $\epsilon I_2 > (\epsilon /4) {\bf 1}_{2 \times 2}$. It follows that $g(\epsilon)I_2 > g(\epsilon/4) {\bf 1}_{2 \times 2}$. Letting $\epsilon \to 0^+$, we obtain $g^+(0) I_2 \geq g^+(0) {\bf 1}_{2 \times 2}$. This is only possible if $g^+(0) = 0$. Thus $g$ is continuous at $x = 0$ and is strictly positive on $(0, \infty)$. 

Let $A \in M_2([0,\infty))$ be positive definite. Then there exists $\epsilon > 0$ such that $A > \epsilon I_2$. Thus $g[A] > g(\epsilon) I_2 > 0$ since $g(\epsilon) > 0$. This proves $g$ preserves positive definiteness. Conversely, suppose $A$ is not positive definite. Then $A \not> \epsilon I_2$ for all $\epsilon > 0$. Thus $g[A] \not> g(\epsilon) I_2$ for all $\epsilon > 0$. Since $g((0,\infty)) = (0,\infty)$, it follows that $g[A]$ is not positive definite. This proves $g$ is a positive definite sign preserver. By Theorem \ref{main_thm_1}, we have $g(x) = \alpha x^\beta$ for some $\alpha, \beta > 0$. Proceeding as in the proof of $(1)\implies (3)$, we conclude that $\beta = 1$.
\end{proof}

\section*{Acknowledgments}
The authors would like to thank Apoorva Khare for his comments on the paper and for suggesting to examine sign preservers for graphs. The authors are grateful to the anonymous referee for a careful reading of the manuscript and for valuable comments and suggestions that improved the paper. Part of this work was completed during a BIRS Research in Teams program at UBC Okanagan. The authors would like to express their gratitude to the Banff International Research Station for its support and to UBC Okanagan for its hospitality. 

D.G. was partially supported by a Simons Foundation collaboration grant for mathematicians and by NSF grant \#2350067. H.G. acknowledges support from PIMS (Pacific Institute for the Mathematical Sciences) Postdoctoral Fellowships. P.K.V. was supported by the Centre de recherches math\'ematiques and Universit\'e Laval (CRM--Laval) Postdoctoral Fellowship, and he acknowledges support from a SwarnaJayanti Fellowship from DST and SERB (Govt.~of India). C.H.Y. was supported in part by an NSERC fellowship. 

\bibliographystyle{plain}
\bibliography{main}

\end{document}